\title{Multidimensional $p$-adic continued fraction algorithms}
\author[1,\thanks{saito@fun.ac.jp}]{Asaki Saito}
\author[2,\thanks{jtamura@tsuda.ac.jp}]{Jun-ichi Tamura}
\author[3,\thanks{shinichi@yasutomi-sci.toho-u.ac.jp}]{Shin-ichi Yasutomi}
\affil[1]{Future University Hakodate, Hakodate, Hokkaido 041-8655, Japan}
\affil[2]{Tsuda College, Kodaira, Tokyo 187-8577, Japan}
\affil[3]{Toho University, Funabashi, Chiba 274-8510, Japan}
\newtheorem{thm}{Theorem}[section]
\newtheorem{cor}[thm]{Corollary}
\newtheorem{lem}[thm]{Lemma}
\newtheorem{prop}[thm]{Proposition}
\theoremstyle{definition}
\theoremstyle{remark}
\begin{document}
\maketitle

\begin{abstract}
We give a new class of multidimensional $p$-adic continued fraction algorithms.
We propose an algorithm in the class for which we can expect that multidimensional $p$-adic version of
Lagrange's Theorem holds.
\end{abstract}

\footnote[0]{2010 {\it Mathematics Subject Classification}. Primary 11J70; Secondary 11J61.}
\footnote[0]{2010 {\it Key words and phrases.} continued fractions, multidimensional $p$-adic continued fraction algorithms}

\section{Introduction}
Throughout the paper, $p$ denotes a fixed prime number, $\Bbb{Q}_p$(resp., $\Bbb{Z}_p$)
the closure of  $\Bbb{Q}$(resp., $\Bbb{Z}$) with respect to  the $p$-adic topology.
For $x\in \Bbb{Z}_p$,  $ord_p(x)$ denotes the highest power of $p$ by which
$x$ is divided to be a $p$-adic integer.
Schneider \cite{S} has introduced the following $p$-adic continued fraction algorithm.
Let $\xi\in {\Bbb Z}_p$.
We define $\xi_1:=\xi-a_0\in p{\Bbb Z}_p$
 by choosing  $a_0\in \{0,1,\ldots,p-1\}$.
We define $\xi_n(n\geq 2)$ recursively by
\begin{align*}
\xi_{n}=\dfrac{p^{ord_p(\xi_{n-1})}}{\xi_{n-1}}-a_{n-1},
\end{align*}
where $a_{n-1}\in\{1,\ldots,p-1\}$ is chosen such that
$\xi_n\in p{\Bbb Z}_p$.
Then, we have
\begin{align*}
\xi=a_0+\cfrac{p^{ord_p(\xi_{1})}}{a_1+\cfrac{p^{ord_p(\xi_{2})}}{a_2+\cfrac{p^{ord_p(\xi_{3})}}{a_3+\ldots}}}.
\end{align*}
Weger \cite{W} has shown that some quadratic elements have not eventually periodic expansion  by Schneider's algorithm.
Ruban \cite{R} has proposed another $p$-adic continued fraction algorithm different from
Schneider's.
Ooto \cite{O} has shown a result similar to Weger's
concerning the algorithm given by Ruban.
Although Browkin \cite{Br} proposed some $p$-adic continued fraction algorithms, it has not been proved that the continued fraction expansion of every quadratic element obtained by
his algorithm is eventually periodic.
We \cite{STY} have introduced some new $p$-adic continued fraction algorithms,
and have shown $p$-adic versions of Lagrange's theorem,
i.e., if $\alpha\in \Bbb{Q}_p$ is a
quadratic element  over $\Bbb{Q}$,
the continued fractions for $\alpha$
obtained by our algorithms become periodic.
Bekki \cite{BH} has shown  a $p$-adic version of Lagrange's theorem for imaginary irrationals on his continued fraction algorithm.

It seems that there are only a few results on multidimensional $p$-adic continued fractions.
By discovering and exploiting a link between the hermitian canonical forms of certain integral matrices  and $p$-adic
numbers, Tamura \cite{T} has shown that a multidimensional $p$-adic continued fraction converges to
$(\alpha,\alpha^2,\ldots,\alpha^{n-1})$ in the $p$-adic sense without considering algorithms of continued fraction expansion, where $\alpha$ is the root of a certain  polynomial of degree $n$.
We \cite{STY2} considered  some new class of   multidimensional $p$-adic continued fractions and constructed
some explicit formulae of multidimensional continued fractions related to algebraic elements of $\Bbb{Q}_p$
over $\Bbb{Q}$.

In this paper, we propose a class $\mathcal{A}$  of multidimensional $p$-adic continued fraction algorithms such that we can expect that\\

{\bf Conjecture}.
For any  $\Bbb{Q}$-basis $\{1,\alpha_1,\ldots,\alpha_{s}\}$ of
any given field $K\subset \Bbb{Q}_p$ of degree  $[K:\Bbb{Q}]=s+1$,
the continued fraction expansion of $\overline{\alpha}=(\alpha_1,\ldots,\alpha_{s})$
by the $s$-dimensional continued fraction algorithm in the class $\mathcal{A}$ always
becomes eventually periodic(cf. \S 8). \\

In Section 4,  we shall show that  the conjecture holds for $s=1$.
We shall show that for $s>1$,
there exist infinitely many  $\overline{\alpha}\in K^s$
having eventually periodic continued fraction expansion obtained
by the algorithm in our class.
The conjecture is supported by numerical experiments (Tables 6)
for $s=2,3,4,5$ obtained
by the algorithm.
This is in contrast with experiments (Tables 1-5)
obtained
by using other algorithms.



\section{Notation and some lemmas}
We denote by $\overline{\Bbb{Q}_p}$ the algebraic closure of $\Bbb{Q}_p$, and by $\Bbb{A}_p$
 the set of algebraic elements over $\Bbb{Q}$ in $\Bbb{Q}_p$.
We put
\begin{align*}
U_p:=\Bbb{Z}_p\backslash p\Bbb{Z}_p,\
C:=\{0,1,\ldots,p-1\}.
\end{align*}

For
\begin{align*}
\alpha=\sum_{n\in \Bbb{Z}}c_np^n\in {\Bbb Q}_p\backslash\{0\}\ (c_n\in C),
\end{align*}
we define
\begin{align*}
&ord_p(\alpha):=\min\{n |c_n\ne 0\}\  (ord_p(0):=\infty),
\  |\alpha|_p:=p^{-ord_p(\alpha)}\ (|0|_p:=0),\\
 &\omega_p(\alpha):=c_0,  \ \lfloor \alpha \rfloor_p:=\Sigma_{n\in \Bbb{Z}_{\leq 0}}c_np^n
 \text{\ and\ } \langle \alpha \rangle_p:=\alpha-\lfloor \alpha \rfloor_p.
\end{align*}
For an integer $s>0$ and $\overline{\alpha}=(\alpha_1,\ldots,\alpha_s)\in \Bbb{Q}_p^s$,
we define
\begin{align*}
&ord_p(\overline{\alpha}):=\min_{1\leq i \leq s} ord_p(\alpha_i),\\
&|\overline{\alpha}|_p:=p^{-ord_p(\overline{\alpha})},\\
&\lfloor \overline{\alpha} \rfloor_p:=(\lfloor\alpha_1\rfloor_p,\ldots,\lfloor\alpha_s\rfloor_p).
\end{align*}
We define a transformation $T_{Sch}$ which is associated with Schneider's $p$-adic continued fraction on $\Bbb{Q}_p$ as follows:
for $\alpha\in \Bbb{Q}_p$ with $\alpha\ne 0$,
\begin{align*}
&T_{Sch}(\alpha):=\dfrac{p^{ord_p(\alpha)}}{\alpha}-\omega_p\left(\dfrac{p^{ord_p(\alpha)}}{\alpha}\right),\\
&T_{Sch}(0):=0.
\end{align*}
We see that  $ord_p(T_{Sch}(\alpha))>0$  for every $\alpha\in \Bbb{Q}_p$.
\\

\begin{lem}\label{l1}
Let $T$ be the transformation on $p\Bbb{Z}_p$ defined by
$T(x):=\dfrac{p^m}{x+a}$ for $x\in p\Bbb{Z}_p$, where $m$ is a positive integer and
$a\in U_p$.
Then, for $\alpha, \beta\in p\Bbb{Z}_p$,
 $|T(\alpha)-T(\beta)|_p=p^{-m}|\alpha-\beta|_p$.
\end{lem}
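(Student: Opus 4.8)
The plan is to compute $T(\alpha)-T(\beta)$ directly and then exploit that translating by the unit $a$ keeps the relevant denominators $p$-adic units, so the only contribution to the valuation comes from the factor $p^m$ and from $\alpha-\beta$.

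First I would write, for $\alpha,\beta\in p\Bbb{Z}_p$,
\begin{align*}
T(\alpha)-T(\beta)=\frac{p^m}{\alpha+a}-\frac{p^m}{\beta+a}
=p^m\cdot\frac{\beta-\alpha}{(\alpha+a)(\beta+a)}.
\end{align*}
Next, since $\alpha,\beta\in p\Bbb{Z}_p$ and $a\in U_p=\Bbb{Z}_p\backslash p\Bbb{Z}_p$, we have $\alpha+a\equiv a\equiv\beta+a$ modulo $p\Bbb{Z}_p$ with $a\notin p\Bbb{Z}_p$; hence $\alpha+a,\beta+a\in U_p$, and therefore $|\alpha+a|_p=|\beta+a|_p=1$. (This observation also re-confirms that $T$ is well defined on $p\Bbb{Z}_p$, and in fact that $T$ maps into $p^m\Bbb{Z}_p\subseteq p\Bbb{Z}_p$.) Applying multiplicativity of $|\cdot|_p$ now yields
\begin{align*}
|T(\alpha)-T(\beta)|_p=|p^m|_p\cdot\frac{|\beta-\alpha|_p}{|\alpha+a|_p\,|\beta+a|_p}=p^{-m}\,|\alpha-\beta|_p,
\end{align*}
which is the claim.

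I do not expect any genuine obstacle: the entire content is the remark that $x+a$ is a $p$-adic unit for every $x\in p\Bbb{Z}_p$, after which the conclusion is immediate from the ultrametric multiplicativity of the absolute value. The only point worth stating explicitly is that this unit property is exactly what forces the contraction factor to be precisely $p^{-m}$ (rather than merely $\le p^{-m}$), since it makes the computation an equality throughout.
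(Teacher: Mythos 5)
Your proposal is correct and follows exactly the argument in the paper: compute the difference as $p^m(\beta-\alpha)/((\alpha+a)(\beta+a))$ and note that the denominators are $p$-adic units. You merely spell out the unit observation that the paper leaves implicit, so there is nothing to add.
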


\begin{proof}
For $\alpha, \beta\in p\Bbb{Z}_p$,
\begin{align*}
|T(\alpha)-T(\beta)|_p=\left|\dfrac{p^m(\alpha-\beta)}{(\alpha+a)(\beta+a)}\right|_p=
p^{-m}|\alpha-\beta|_p.
\end{align*}
\end{proof}

The following Condition {\bf H} will play a key role throughout the paper.\\

We say that  $\beta \in (\Bbb{A}_p\backslash \Bbb{Q}) \cap p\Bbb{Z}_p$
satisfies  Condition {\bf H} if the  minimal polynomial over $\Bbb{Q}$
is of the form
\begin{align*}
x^n+a_1x^{n-1}+\ldots +a_n
 \in (\Bbb{Z}_p\cap {\Bbb Q})[x],\   ord_p(a_{n-1})=0 \text{\ and\ }
 ord_p(a_{n})>0.
\end{align*}

We remark that for  a polynomial
$p(x)=x^n+a_1x^{n-1}+\ldots +a_n\in (\Bbb{Z}_p\cap {\Bbb Q})[x]$
with
  $ord_p(a_{n-1})=0$ and
 $ord_p(a_{n})>0$, Hensel's Lemma says that there exists  $\alpha\in \Bbb{Q}_p$
such that $p(\alpha)=0$ and $ord_p(\alpha)=ord_p(a_{n})$.

\begin{lem}\label{l2}
Let $\beta \in (\Bbb{A}_p\backslash \Bbb{Q})\cap p\Bbb{Z}_p$.
There exists a positive integer  $m$ such that
for every integer $n\geq m$,  $T_{Sch}^n(\beta)$ satisfies Condition {\bf H}.
\end{lem}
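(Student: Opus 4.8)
The plan is to follow the whole orbit $\gamma_n:=T_b^n(\beta)$ together with all of its algebraic conjugates in $\overline{\Bbb{Q}_p}$. First I would record that $T_b$ maps $(\Bbb{A}_p\setminus\Bbb{Q})\cap p\Bbb{Z}_p$ into itself: if $\gamma$ lies in this set then $e:=ord_p(\gamma)\ge 1$, so $p^e/\gamma\in U_p$ and $T_b(\gamma)=p^e/\gamma-c$ with $c:=\omega_p(p^e/\gamma)\in\{1,\dots,p-1\}$; hence $T_b(\gamma)\in p\Bbb{Z}_p$, and from $\gamma=p^e/(T_b(\gamma)+c)$ one gets $\Bbb{Q}(T_b(\gamma))=\Bbb{Q}(\gamma)=:K$, so $T_b(\gamma)\in(\Bbb{A}_p\setminus\Bbb{Q})\cap p\Bbb{Z}_p$ as well. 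Thus for all $n$, $\gamma_n\in K\cap p\Bbb{Z}_p$; write $e_n:=ord_p(\gamma_n)\ge 1$, $c_n:=\omega_p(p^{e_n}/\gamma_n)\in\{1,\dots,p-1\}$, and $d:=[K:\Bbb{Q}]\ge 2$. Next I would reformulate condition (H): inspecting the Newton polygon (equivalently, the elementary symmetric functions of the roots), $\gamma_n$ satisfies (H) precisely when $\gamma_n$ is the \emph{unique} conjugate of positive valuation and the remaining $d-1$ conjugates are $p$-adic units. So the lemma reduces to showing that for each embedding $\sigma:K\hookrightarrow\overline{\Bbb{Q}_p}$ with $\sigma\neq\mathrm{id}$, one has $ord_p(\sigma(\gamma_n))=0$ for all sufficiently large $n$.

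Fix such a $\sigma$ and set $z_n:=\sigma(\gamma_n)\in\overline{\Bbb{Q}_p}\setminus\{0\}$. Since $\sigma$ fixes $\Bbb{Q}$ and $p^{e_n},c_n\in\Bbb{Q}$, the conjugates obey the \emph{same} recursion $z_{n+1}=p^{e_n}/z_n-c_n$ (the data $e_n,c_n$ depend only on the $\mathrm{id}$-value $\gamma_n$). I would then chase the valuation $w_n:=ord_p(z_n)\in\Bbb{Q}$: if $w_n<e_n$ then $ord_p(p^{e_n}/z_n)=e_n-w_n>0$ and, as $ord_p(c_n)=0$, $w_{n+1}=0$; if $w_n>e_n$ then $w_{n+1}=e_n-w_n<0$; and if $w_n=e_n$ then $z_{n+1}$ is a unit minus $c_n$, so only $w_{n+1}\ge 0$ is guaranteed. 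Because $e_n\ge 1$, the value $0$ is absorbing, and any single $n$ with $w_n\neq e_n$ forces $w_m=0$ for all $m\ge n+2$. Hence it suffices to exclude the scenario that $ord_p(\sigma(\gamma_n))=e_n$ for every $n$.

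I expect this exclusion to be the heart of the argument. Assuming $ord_p(\sigma(\gamma_n))=ord_p(\gamma_n)=e_n$ for all $n$, put $\delta_n:=\gamma_n-\sigma(\gamma_n)$, which is nonzero because $\gamma_n$ generates $K$ over $\Bbb{Q}$. Subtracting the two recursions gives $\delta_{n+1}=p^{e_n}(1/\gamma_n-1/\sigma(\gamma_n))=-p^{e_n}\delta_n/(\gamma_n\sigma(\gamma_n))$, whence $ord_p(\delta_{n+1})=e_n+ord_p(\delta_n)-2e_n=ord_p(\delta_n)-e_n\le ord_p(\delta_n)-1$. So $ord_p(\delta_n)\to-\infty$, contradicting $ord_p(\delta_n)\ge\min(e_n,e_n)=e_n\ge 1$. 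This rules out the scenario, so for each of the $d-1$ nontrivial embeddings $\sigma$ there is $n_\sigma$ with $ord_p(\sigma(\gamma_n))=0$ for all $n\ge n_\sigma$.

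Finally I would take $m:=\max_\sigma n_\sigma$ (maximum over $\sigma\neq\mathrm{id}$). For $n\ge m$ the conjugates of $\gamma_n$ are $\gamma_n$ itself (valuation $e_n>0$, the unique one of positive valuation since $\gamma_n\in p\Bbb{Z}_p$) together with $d-1$ units; all elementary symmetric functions of them have nonnegative valuation, so the (rational) coefficients of the minimal polynomial lie in $\Bbb{Z}_p\cap\Bbb{Q}$; the constant term $\pm\prod(\text{conjugates})$ has valuation $e_n>0$; and the coefficient of $x$, namely $\pm\sum_i\prod_{j\ne i}(\text{conjugates})$, is a unit, because the one summand omitting the factor $\gamma_n$ is a unit while every other summand carries the factor $\gamma_n$ and hence has positive valuation. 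Since $d\ge 2$ forces $\gamma_n\notin\Bbb{Q}$, this proves that $T_b^n(\beta)$ satisfies (H) for all $n\ge m$.
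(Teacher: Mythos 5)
Your proposal is correct and follows essentially the same route as the paper: track the valuations of $\sigma(\gamma_n)$ under the shared recursion, observe the trichotomy that makes valuation $0$ absorbing after two steps, and rule out the case $ord_p(\sigma(\gamma_n))=ord_p(\gamma_n)$ for all $n$ by showing the $p$-adic distance between the two orbits would be forced to $0$ (you run this expansion forwards on $\delta_n$, the paper runs it backwards via its Lemma 1 — the same computation). Your use of the extended valuation on $\overline{\Bbb{Q}_p}$ and the explicit Newton-polygon reformulation of condition (H) merely make explicit what the paper treats as routine.
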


\begin{proof}
First, we assume that every algebraic conjugate of  $\beta$ is in
$\Bbb{Q}_p$.
We denote by $\beta_1(=\beta),\ldots,\beta_n$ the algebraic conjugates of  $\beta$ and
by $\sigma_1(=identity),\ldots,\sigma_n$ the embeddings of $\Bbb{Q}(\beta)$ into $\Bbb{Q}_p$.
Then, we have for $1\leq i \leq n$
\begin{align*}
\sigma_i(T_{Sch}(\beta))=\dfrac{p^{ord_p(\beta)}}{\sigma_i(\beta)}-\omega_p\left(\dfrac{p^{ord_p(\beta)}}{\beta}\right),
\end{align*}
which implies that
for $2\leq i\leq n$,
$ord_p(\sigma_i(T_{Sch}(\beta)))=0$ if $ord_p(\sigma_i(\beta))<ord_p(\beta)$,
$ord_p(\sigma_i(T_{Sch}(\beta)))<0$ if $ord_p(\sigma_i(\beta))>ord_p(\beta)$, and
$ord_p(\sigma_i(T_{Sch}(\beta)))\geq 0$ if $ord_p(\sigma_i(\beta))=ord_p(\beta)$.
Therefore, for $2\leq i\leq n$ if for some integer $m>0$
$ord_p(\sigma_i(T^m_{Sch}(\beta)))\ne ord_p(T^m_{Sch}(\beta))$ holds, then
$ord_p(\sigma_i(T^{k}_{Sch}(\beta)))=0$ for $k\geq m+2$.
We assume that there exists some $j$ with $2\leq j\leq n$ such that
$ord_p(\sigma_j(T^m_{Sch}(\beta)))= ord_p(T^m_{Sch}(\beta))$ holds for every integer $m\geq 0$.
Then, it is not difficult to see that for every integer $m\geq 0$
\begin{align*}
\omega_p\left(\dfrac{p^{ord_p(T^m_{Sch}((\sigma_j(\beta)))}}{T^m_{Sch}(\sigma_j(\beta))}\right)
=
\omega_p\left(\dfrac{p^{ord_p(T^m_{Sch}(\beta))}}{T^m_{Sch}(\beta)}\right).
\end{align*}
 We set a transformation $T_m$$(m\in {\Bbb Z}_{\geq 0})$ on $p\Bbb{Z}_p$ by
\begin{align*}
T_m(x):=\dfrac{p^{ord_p(T^m_{Sch}(\beta))}}{x+\omega_p\left(\dfrac{p^{ord_p(T^m_{Sch}(\beta))}}{T^m_{Sch}(\beta)}\right)}
.
\end{align*}
By virtue of Lemma \ref{l1} we see that
\begin{align*}
&|\beta-\sigma_j(\beta)|_p=
|T_0\circ \cdots \circ T_{m-1}(T^m_{Sch}(\beta))-T_0\circ \cdots \circ T_{m-1}(T^m_{Sch}(\sigma_j(\beta)))|_p\\
&=p^{-\Sigma_{k=0}^{m-1} ord_p(T^k_{Sch}(\beta))}|T^m_{Sch}(\beta)-T^m_{Sch}(\sigma_j(\beta))|_p
<p^{-\Sigma_{k=0}^{m-1} ord_p(T^k_{Sch}(\beta))}.
\end{align*}
Therefore, taking $m\to \infty$ we get $|\beta-\sigma_j(\beta)|_p=0$, which contradicts
$\beta\ne \sigma_j(\beta)$.
Thus, we see that
there exists  an integer $m'>0$
such that for every integer $m''>m'$,
$ord_p(\sigma_i(T^{m''}_{Sch}(\beta)))=0$ for $i=2,\ldots,n$ and
$T^{m''}_{Sch}(\beta)$ satisfies Condition {\bf H}.
If some algebraic conjugates of  $\beta$ are not  included in
$\Bbb{Q}_p$, then considering  $\Bbb{Q}_p(\beta_1,\ldots,\beta_n)$
we have a similar proof.
\end{proof}

Lemma \ref{l2} implies  the following proposition.

\begin{prop}\label{p1}
Let $K\subset \Bbb{Q}_p$ be a finite extension of
$\Bbb{Q}$.
There exists $\alpha\in K$ which satisfies
Condition {\bf H} and $K=\Bbb{Q}(\alpha)$.
\end{prop}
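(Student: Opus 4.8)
The plan is to deduce Proposition~\ref{p1} from Lemma~\ref{l2} by first normalizing an arbitrary primitive element of $K$ so that the hypotheses of Lemma~\ref{l2} apply, and then reading off the conclusion. Concretely, write $K=\Bbb{Q}(\gamma)$ for some primitive element $\gamma$; since $K\subset\Bbb{Q}_p$ is a finite extension of $\Bbb{Q}$ of degree, say, $n\geq 2$ (the case $n=1$ is vacuous, as $\alpha=0$ then satisfies (H) trivially, or one may simply exclude it), we have $\gamma\in\Bbb{A}_p\setminus\Bbb{Q}$. The first step is to replace $\gamma$ by a conjugate-friendly element of $p\Bbb{Z}_p\cap(\Bbb{A}_p\setminus\Bbb{Q})$ that still generates $K$: multiplying by a suitable power $p^N$ and, if necessary, adding a suitable element of $C$ or taking $T_b$ once to land inside $p\Bbb{Z}_p$, we obtain $\beta\in p\Bbb{Z}_p$ with $\Bbb{Q}(\beta)=K$ (since all these operations are $\Bbb{Q}$-rational, they do not shrink the generated field, and multiplying a primitive element by a nonzero rational or applying an invertible rational Möbius transform keeps it primitive).

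Once such a $\beta$ is in hand, the second step is a direct appeal to Lemma~\ref{l2}: there is an integer $m\geq 1$ such that $T_b^m(\beta)$ satisfies condition (H). Set $\alpha:=T_b^m(\beta)$. It remains to check $\Bbb{Q}(\alpha)=K$. This is the key point to verify carefully. Each application of $T_b$ sends $x\mapsto p^{ord_p(x)}/x-\omega_p(p^{ord_p(x)}/x)$; the exponent $ord_p(x)$ and the digit $\omega_p(\,\cdot\,)$ are integers determined by $x$, so on the orbit of $\beta$ the map $T_b$ acts as a $\Bbb{Q}$-rational Möbius transformation $x\mapsto (c)/x - d$ with $c=p^{ord_p(x)}\in\Bbb{Q}^\times$ and $d\in\Bbb{Z}$; such a transformation is invertible over $\Bbb{Q}$, hence $\Bbb{Q}(T_b(x))=\Bbb{Q}(x)$ whenever $T_b(x)\neq 0$. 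Since $\beta$ is an algebraic irrational, none of the iterates $T_b^k(\beta)$ is $0$ (a zero iterate would force all later ones to be $0$ and, tracing back through the invertible maps, would make $\beta$ rational), so $\Bbb{Q}(\alpha)=\Bbb{Q}(T_b^m(\beta))=\Bbb{Q}(\beta)=K$.

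The main obstacle I anticipate is purely bookkeeping at the normalization stage: ensuring that the element fed to Lemma~\ref{l2} genuinely lies in $p\Bbb{Z}_p$ and genuinely generates $K$, while keeping track of the (harmless) $\Bbb{Q}$-rationality of every manipulation. In particular one must handle the possibility $ord_p(\gamma)\leq 0$ by clearing denominators with a power of $p$ and then shifting by the integer part $\lfloor\,\cdot\,\rfloor_p$ and a constant digit to force membership in $p\Bbb{Z}_p$; all of these are invertible $\Bbb{Q}$-affine or $\Bbb{Q}$-Möbius steps, so primitivity survives. Beyond that, the argument is a one-line invocation of Lemma~\ref{l2} together with the observation that $T_b$, restricted to an algebraic orbit, is a $\Bbb{Q}$-rational birational map fixing the generated field.
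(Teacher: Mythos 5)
Your proposal is correct and is essentially the paper's own argument: the paper states Proposition~\ref{p1} as an immediate consequence of Lemma~\ref{l2}, and your write-up simply fills in the (intended but unwritten) details — normalize a primitive element into $(\Bbb{A}_p\setminus\Bbb{Q})\cap p\Bbb{Z}_p$ by a $\Bbb{Q}$-rational rescaling, apply Lemma~\ref{l2}, and observe that each step of $T_b$ on an irrational orbit is an invertible $\Bbb{Q}$-Möbius map, so the generated field is preserved. The only minor slip is the aside that $\alpha=0$ ``satisfies (H) trivially'' when $K=\Bbb{Q}$ — condition (H) is defined only for elements of $\Bbb{A}_p\setminus\Bbb{Q}$, so the degree-one case must simply be excluded (as the paper implicitly does) — but this does not affect the substance of the argument.
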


\section{$c$-map}
In this section, we introduce a class of multidimensional $p$-adic continued fraction
algorithms.
Let $K\subset \Bbb{Q}_p$ be a finite extension of
$\Bbb{Q}$ of  degree $d$.
We put
\begin{align*}
&s:=d-1 (d\geq 2) \text{ and }  s:=1 (d=1),\\
&Ind:=\{1,2,\ldots,s\},\\
&D:=K^s,\ E=(p\Bbb{Z}_p)^s\cap K^s.
\end{align*}
In what follows, we always suppose that
\begin{align*}
 \overline{\alpha}=(\alpha_1,\ldots, \alpha_s)\in D=K^s(K\subset \Bbb{Q}_p)
\end{align*}
and $\overline{x}=(x_1,\ldots, x_s)\in D$.

We denote by $L(D)$ the set of linear fractional transformations on $D$.
We now introduce a map $\Phi:D \to Ind\times L(D)\times
GL(s,\Bbb{Z}_p\cap \Bbb{Q})\times (p\Bbb{Z}_p\cap \Bbb{Q})^s$.
We define
$\Phi(\overline{\alpha})$ by
$\Phi(\overline{\alpha}):=(\phi(\overline{\alpha}),F_{\overline{\alpha}},A_{\overline{\alpha}},\gamma(\overline{\alpha}))$ with $\gamma(\bar{0}):=\bar{0}$,
where
$A_{\overline{\alpha}}$ and $\gamma(\overline{\alpha})$  are arbitrarily fixed and
$F_{\overline{\alpha}}=(f_1,\ldots,f_s)$ is given as follows:\\

In the case of $\alpha_{\phi(\overline{\alpha})}\ne 0$,
we define $f_i$ for $(x_1,\ldots,x_s)\in D$ with $x_{\phi(\overline{\alpha})}\ne 0$ by
\[
f_i(x_1,\ldots,x_s) :=
\begin{cases}
\dfrac{u_{\phi(\overline{\alpha})}p^{ord_p(\alpha_{\phi(\overline{\alpha})})}}{x_{\phi(\overline{\alpha})}}-v_{\phi(\overline{\alpha})}& \text{if $i=\phi(\overline{\alpha})$},\\
\dfrac{u_i'p^k x_i}{x_{\phi(\overline{\alpha})}}-v_i'& \text{if $i\ne \phi(\overline{\alpha})$},
\end{cases}
\]
where $u_{\phi(\overline{\alpha})},v_{\phi(\overline{\alpha})},u_i'\in U_p\cap \Bbb{Q}$, $v_i' \in \Bbb{Z}_p\cap \Bbb{Q}$, and
$k=\max\{ord_p(\alpha_{\phi(\overline{\alpha})})-ord_p(\alpha_i)$,0\}.
We also assume that
$f_i(\overline{\alpha})\in p\Bbb{Z}_p$ for $1\leq i\leq s$.
Note that choices of $u_{\phi(\overline{\alpha})},v_{\phi(\overline{\alpha})},u_i',v_i'$ for $1\leq i\leq s$ are  subject to the restriction.
\
We do not define $f_i$ for $(x_1,\ldots,x_s)\in D$ with $x_{\phi(\overline{\alpha})}= 0$.\\

In the case of $\alpha_{\phi(\overline{\alpha})}= 0$,
we define $f_i$ by
\[
f_i(x_1,\ldots,x_s):=x_i.
\]

In what follows, we call $\Phi$ a $c$-map.
For a $c$-map $\Phi$ we define a transformation $T_{\Phi(\overline{\alpha})}$ on $D$ as follows:
for $x\in D$
\begin{align*}
T_{\Phi(\overline{\alpha})}(x):=A_{\overline{\alpha}}F_{\overline{\alpha}}(x)+\gamma(\overline{\alpha}).
\end{align*}


\begin{lem}\label{l2.5}
Let
$\Phi(\overline{\alpha})=(\phi(\overline{\alpha}),F_{\overline{\alpha}},A_{\overline{\alpha}},\gamma(\overline{\alpha}))$
be a $c$-map with $F=(f_1,\ldots,f_s)$ and $\gamma=(\gamma_1,\ldots,\gamma_s)$.
Let $\overline{\alpha}\in D$ with
$\alpha_{\phi(\overline{\alpha})}\ne 0$,
 then, \\$A_{\overline{\alpha}}(f_1(\overline{\alpha}),\ldots,f_s(\overline{\alpha}))+(\gamma_1(\overline{\alpha}),\ldots,\gamma_s(\overline{\alpha}))\in E$ holds.
\end{lem}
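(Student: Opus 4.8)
The plan is to verify directly the two defining properties of $E=(p\Bbb{Z}_p)^s\cap K^s$ for the vector $v:=A_{\overline{\alpha}}(f_1(\overline{\alpha}),\ldots,f_s(\overline{\alpha}))^T+(\gamma_1(\overline{\alpha}),\ldots,\gamma_s(\overline{\alpha}))^T$, i.e.\ that each coordinate of $v$ lies in $p\Bbb{Z}_p$ and that each lies in $K$. First I would observe that the hypothesis $\alpha_{\phi(\overline{\alpha})}\ne 0$ puts us in the first case of the definition of the $c$-map, so $F_{\overline{\alpha}}(\overline{\alpha})$ is defined; and, by the requirement built into the notion of a $c$-map, $f_i(\overline{\alpha})\in p\Bbb{Z}_p$ for every $i\in Ind$, so $F_{\overline{\alpha}}(\overline{\alpha})\in(p\Bbb{Z}_p)^s$. (This is exactly where $\alpha_{\phi(\overline{\alpha})}\ne 0$ matters: in the other case $f_i(x)=x_i$ and the coordinates of $\overline{\alpha}\in D=K^s$ need not lie in $p\Bbb{Z}_p$.)

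Next I would check $F_{\overline{\alpha}}(\overline{\alpha})\in K^s$. All the scalars $u_i,v_i,u_i',v_i'$ occurring in the formulas for the $f_i$ are rational, and since $\alpha_i\in K\subseteq\Bbb{Q}_p$ we have $ord_p(\alpha_i)\in\Bbb{Z}$, so $p^{ord_p(\alpha_{\phi(\overline{\alpha})})}$ and $p^k$ are rational as well; combined with $\alpha_i\in K$ and with the fact that $\alpha_{\phi(\overline{\alpha})}\ne 0$ makes the divisions legitimate inside the field $K$, each $f_i(\overline{\alpha})$ is an element of $K$. Hence $F_{\overline{\alpha}}(\overline{\alpha})\in(p\Bbb{Z}_p)^s\cap K^s=E$. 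Now $A_{\overline{\alpha}}\in GL(s,\Bbb{Z}_p\cap\Bbb{Q})$ has entries in $\Bbb{Z}_p\cap\Bbb{Q}\subseteq\Bbb{Z}_p\cap K$, and since $p\Bbb{Z}_p$ is an ideal of $\Bbb{Z}_p$ while $K$ is a field, each coordinate of $A_{\overline{\alpha}}F_{\overline{\alpha}}(\overline{\alpha})$ is simultaneously a $\Bbb{Z}_p$-linear combination of elements of $p\Bbb{Z}_p$ and a $K$-linear combination of elements of $K$; it therefore lies in $p\Bbb{Z}_p\cap K$, whence $A_{\overline{\alpha}}F_{\overline{\alpha}}(\overline{\alpha})\in E$. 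Finally $\gamma(\overline{\alpha})\in(p\Bbb{Z}_p\cap\Bbb{Q})^s\subseteq E$, and $E$ is an additive group (being the intersection of the two additive groups $(p\Bbb{Z}_p)^s$ and $K^s$), so $v\in E$, which is the assertion.

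I do not anticipate any genuine obstacle: the statement is essentially bookkeeping from the definition of a $c$-map together with $\Bbb{Q}\subseteq K$ and the fact that $A_{\overline{\alpha}}$ and $\gamma(\overline{\alpha})$ carry $p$-integral rational data. The only point deserving a moment's care is the membership in $K$ — one must confirm that forming the $f_i(\overline{\alpha})$, in particular dividing by $\alpha_{\phi(\overline{\alpha})}$, does not leave $K$, and this is precisely what the hypothesis $\alpha_{\phi(\overline{\alpha})}\ne 0$ secures; the valuation condition $v\in(p\Bbb{Z}_p)^s$ is then immediate from the hypotheses $f_i(\overline{\alpha})\in p\Bbb{Z}_p$ and $\gamma(\overline{\alpha})\in(p\Bbb{Z}_p\cap\Bbb{Q})^s$ imposed on a $c$-map.
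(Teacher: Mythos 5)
Your proof is correct, and it is precisely the routine verification the paper leaves out: the published proof of Lemma~\ref{l2.5} consists only of the sentence ``The proof is easy.'' Your write-up supplies the intended bookkeeping --- $f_i(\overline{\alpha})\in p\Bbb{Z}_p$ by the assumption built into the definition of a $c$-map, membership in $K$ from the rationality of the scalars and closure of the field $K$ under division by the nonzero $\alpha_{\phi(\overline{\alpha})}$, and stability of $E$ under the action of $A_{\overline{\alpha}}\in GL(s,\Bbb{Z}_p\cap\Bbb{Q})$ and translation by $\gamma(\overline{\alpha})\in(p\Bbb{Z}_p\cap\Bbb{Q})^s$ --- so there is nothing to correct.
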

\begin{proof}
The proof is easy.
\end{proof}

\begin{lem}\label{l3}
Let $\Phi(\overline{\alpha})=(\phi(\overline{\alpha}),F_{\overline{\alpha}},A_{\overline{\alpha}},\gamma(\overline{\alpha}))$
be a $c$-map with $F_{\overline{\alpha}}=(f_1,\ldots,f_s)$.
Let
$(\beta_1,\ldots,\beta_s)^{T}=A_{\overline{\alpha}}(f_1(\overline{\alpha}),\ldots,f_s(\overline{\alpha}))+(\gamma_1(\overline{\alpha}),\ldots,\gamma_s(\overline{\alpha}))^{T}$.
If $1,\alpha_1,\ldots, \alpha_s$ are linearly independent over $\Bbb{Q}$,
then, $1,\beta_1,\ldots,\beta_s$
are linearly independent over $\Bbb{Q}$.
\end{lem}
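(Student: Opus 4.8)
The plan is to argue by contraposition: assume $1,\beta_1,\ldots,\beta_s$ are linearly dependent over $\Bbb{Q}$ and deduce that $1,\alpha_1,\ldots,\alpha_s$ are linearly dependent as well. The key observation is that $T_{\Phi(\overline{\alpha})}$ is an invertible linear-fractional transformation on $D$ whose inverse has coefficients in $K\cap\Bbb{Q}$ (indeed in $\Bbb{Q}$, since all the $u_i,v_i,u_i',v_i'$ and the entries of $A_{\overline{\alpha}}$ lie in $\Bbb{Q}$), so one can express each $\alpha_i$ as a fixed $\Bbb{Q}$-rational linear-fractional expression in the $\beta_j$. The main point to make this work is to separate the two cases in the definition of the $c$-map.

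First I would dispose of the degenerate case $\alpha_{\phi(\overline{\alpha})}=0$: here $F_{\overline{\alpha}}$ is the identity, so $(\beta_1,\ldots,\beta_s)^T = A_{\overline{\alpha}}\,\overline{\alpha} + \gamma(\overline{\alpha})$ with $A_{\overline{\alpha}}\in GL(s,\Bbb{Z}_p\cap\Bbb{Q})$ and $\gamma(\overline{\alpha})\in(\Bbb{Q})^s$; then a $\Bbb{Q}$-linear dependence among $1,\beta_1,\ldots,\beta_s$ pulls back immediately to one among $1,\alpha_1,\ldots,\alpha_s$ via $A_{\overline{\alpha}}^{-1}$, which has rational entries. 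In the main case $\alpha_{\phi(\overline{\alpha})}\neq 0$, write $j:=\phi(\overline{\alpha})$ and $c:=u_j\,p^{ord_p(\alpha_j)}\in\Bbb{Q}^\times$. Then $f_j(\overline{\alpha}) = c/\alpha_j - v_j$ and $f_i(\overline{\alpha}) = (u_i' p^{k_i}/\alpha_j)\,\alpha_i - v_i'$ for $i\neq j$, and after applying $A_{\overline{\alpha}}$ and adding $\gamma(\overline{\alpha})$ we get each $\beta_\ell$ as a $\Bbb{Q}$-linear combination of $1$, $1/\alpha_j$, and the products $\alpha_i/\alpha_j$ ($i\neq j$). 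So a relation $q_0 + \sum_\ell q_\ell\beta_\ell = 0$ with $q_\ell\in\Bbb{Q}$ not all zero becomes, after multiplying through by $\alpha_j$, a relation of the form $r_0\,\alpha_j + r_1 + \sum_{i\neq j} r_i\,\alpha_i = 0$ with $r_\ell\in\Bbb{Q}$; one checks these $r_\ell$ are obtained from the $q_\ell$ by an invertible rational linear map (essentially $A_{\overline{\alpha}}^{-1}$ composed with the explicit diagonal/shift coming from the $u,v,u',v'$), so they are not all zero. That is exactly a nontrivial $\Bbb{Q}$-linear dependence among $1,\alpha_1,\ldots,\alpha_s$.

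The step that needs the most care is verifying that the passage from $(q_\ell)$ to $(r_\ell)$ is genuinely invertible over $\Bbb{Q}$ — i.e. that no information is lost when we clear the denominator $\alpha_j$ and collect terms. Concretely, one should write $(\beta_1,\ldots,\beta_s)^T = A_{\overline{\alpha}} D \,(1/\alpha_j,\ \alpha_1/\alpha_j,\ \ldots\ \widehat{\alpha_j/\alpha_j}\ \ldots,\ \alpha_s/\alpha_j)^T + w$ for an explicit invertible diagonal matrix $D$ over $\Bbb{Q}$ and vector $w\in\Bbb{Q}^s$ (absorbing the $-v_j,-v_i'$ and $\gamma$ terms), observe that the $s$ functions $1/\alpha_j, \alpha_1/\alpha_j,\ldots,\alpha_s/\alpha_j$ (with $\alpha_j/\alpha_j$ omitted, contributing the constant $1$) together with $1$ span the same $\Bbb{Q}$-space as $1,\alpha_1,\ldots,\alpha_s$ after multiplication by $\alpha_j$, and then invert $A_{\overline{\alpha}}D$. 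Since $A_{\overline{\alpha}}\in GL(s,\Bbb{Z}_p\cap\Bbb{Q})$ is in particular in $GL(s,\Bbb{Q})$ and $D$ is invertible over $\Bbb{Q}$, this is routine linear algebra; the only subtlety is bookkeeping the index $j$ that is omitted from the vector. I expect the whole argument to be short once this dictionary is set up.
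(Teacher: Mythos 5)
Your argument is correct and is essentially the paper's proof written out in full: the paper simply asserts that $1,f_1(\overline{\alpha}),\ldots,f_s(\overline{\alpha})$ are linearly independent (your multiply-by-$\alpha_{\phi(\overline{\alpha})}$ computation is exactly the content of that assertion) and then applies $A_{\overline{\alpha}}\in GL(s,\Bbb{Q})$ and the rational translation $\gamma(\overline{\alpha})$. Your extra care with the degenerate case and the invertibility of the coefficient map is sound, though under the hypothesis the case $\alpha_{\phi(\overline{\alpha})}=0$ cannot in fact occur, since $\alpha_i=0$ would already be a nontrivial $\Bbb{Q}$-linear relation.
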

\begin{proof}
Let $1,\alpha_1,\ldots, \alpha_s$ be linearly independent over $\Bbb{Q}$.
Then, $\frac{1}{\alpha_{\phi(\overline{\alpha})}},\frac{\alpha_1}{\alpha_{\phi(\overline{\alpha})}},\ldots, \frac{\alpha_s}{\alpha_{\phi(\overline{\alpha})}}$
are linearly independent over $\Bbb{Q}$.
Therefore, we see that $1,f_1(\overline{\alpha}),\ldots,f_s(\overline{\alpha})$
are linearly independent over $\Bbb{Q}$.
Since $A_{\overline{\alpha}}\in GL(s,\Bbb{Z}_p\cap \Bbb{Q})$,
 $1,\beta_1,\ldots,\beta_s$ are linearly independent over $\Bbb{Q}$.
\end{proof}

\begin{lem}\label{l4}
Let $\Phi(\overline{\alpha})=(\phi(\overline{\alpha}),F_{\overline{\alpha}},A_{\overline{\alpha}},\gamma(\overline{\alpha}))$
be a $c$-map with $F_{\overline{\alpha}}=(f_1,\ldots,f_s)$.
 If $\alpha_{\phi(\overline{\alpha})}\ne 0$, then
$F_{\overline{\alpha}}^{-1}:E\to D$ exists and
for $(x_1,\ldots,x_s)\in E$, $F_{\overline{\alpha}}^{-1}(x_1,\ldots,x_s)$ is given  as follows:\\
$F_{\overline{\alpha}}^{-1}(x_1,\ldots,x_s)=(g_1(x_1,\ldots,x_s),\ldots,g_s(x_1,\ldots,x_s))$, where
\begin{align*}
&\text{if $i=\phi(\overline{\alpha})$, \  there exist  $u_i,v_i\in U_p\cap \Bbb{Q}$\ such that }\\
&\hspace*{3cm}g_i(x_1,\ldots,x_s)=\dfrac{u_ip^{ord_p(\alpha_i)}}{x_i+v_i},\\
&\text{if $i\ne \phi(\overline{\alpha})$, \ there exist  $u'_i\in U_p\cap \Bbb{Q},\ w'_i\in \Bbb{Z}_p\cap \Bbb{Q}$\ such that }\\
&\hspace*{3cm}g_i(x_1,\ldots,x_s)=\dfrac{u'_ip^k (x_i+w'_i)}{x_{\phi(\overline{\alpha})}+v_{\phi(\overline{\alpha})}},\\
&\text{where $k=\min\{ord_p(\alpha_{\phi(\overline{\alpha})}),ord_p(\alpha_i)$\}}.\\
\end{align*}
\end{lem}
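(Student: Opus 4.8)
The plan is to invert $F_{\overline{\alpha}}$ explicitly, one coordinate at a time, and then to check that every quantity produced along the way stays a $p$-adic unit or a $p$-adic integer, so that the resulting formulas genuinely define a map $E\to D$. Write $j:=\phi(\overline{\alpha})$, so that on the locus $\{x_j\ne 0\}$ one has $f_j(x)=u_jp^{ord_p(\alpha_j)}/x_j-v_j$ and, for $i\ne j$, $f_i(x)=u_i'p^{k}x_i/x_j-v_i'$ with $k=\max\{ord_p(\alpha_j)-ord_p(\alpha_i),0\}$, where $u_j,v_j,u_i'\in U_p\cap\Bbb{Q}$ and $v_i'\in\Bbb{Z}_p\cap\Bbb{Q}$. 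Given $(x_1,\ldots,x_s)^T\in E$ I want to exhibit $(g_1,\ldots,g_s)^T\in D$, lying in the domain of $F_{\overline{\alpha}}$, with $F_{\overline{\alpha}}(g_1,\ldots,g_s)=(x_1,\ldots,x_s)$.

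First I would treat the $j$-th coordinate: solving $x_j=u_jp^{ord_p(\alpha_j)}/g_j-v_j$ for $g_j$ gives $g_j=u_jp^{ord_p(\alpha_j)}/(x_j+v_j)$. Since $x_j\in p\Bbb{Z}_p$ while $v_j\in U_p$, the denominator $x_j+v_j$ lies in $U_p$, hence is a nonzero element of $K$; therefore $g_j\in K$ and, as $ord_p(g_j)=ord_p(\alpha_j)$, also $g_j\ne 0$. This is already the claimed shape, with $u_j,v_j\in U_p\cap\Bbb{Q}$. For $i\ne j$, solving $x_i=u_i'p^{k}g_i/g_j-v_i'$ gives $g_i=(x_i+v_i')g_j/(u_i'p^{k})$, and substituting the formula for $g_j$ yields
\[
g_i=\frac{(u_j/u_i')\,p^{\,ord_p(\alpha_j)-k}\,(x_i+v_i')}{x_j+v_j}.
\]
The point is the elementary identity $ord_p(\alpha_j)-\max\{ord_p(\alpha_j)-ord_p(\alpha_i),0\}=\min\{ord_p(\alpha_j),ord_p(\alpha_i)\}$, which converts the exponent $\max$ of $f_i$ into the $\min$ required in $g_i$; furthermore $u_j/u_i'\in U_p\cap\Bbb{Q}$, since $U_p\cap\Bbb{Q}$ is a multiplicative group, and $v_i'\in\Bbb{Z}_p\cap\Bbb{Q}$. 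Renaming $u_i'$ as $u_j/u_i'$ and setting $w_i':=v_i'$ gives precisely the asserted formula for $g_i$.

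To finish I would record three observations. The formulas above define a map $(g_1,\ldots,g_s):E\to D$, because the only denominator that occurs, $x_j+v_j$, is a unit — in particular nonzero — at every point of $E$, and all the remaining factors lie in $K$. Since $g_j\ne 0$, the image point lies in the domain of $F_{\overline{\alpha}}$, and by construction $F_{\overline{\alpha}}(g_1,\ldots,g_s)=(x_1,\ldots,x_s)$. Conversely the same computation shows that $F_{\overline{\alpha}}$ is injective on $\{x_j\ne 0\}$: $f_j(x)$ determines $x_j$, and then each $f_i(x)$ determines $x_i$. Hence $F_{\overline{\alpha}}^{-1}:E\to D$ exists and is given by the displayed $g_i$'s. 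There is no genuine difficulty here; the one thing requiring care is the valuation bookkeeping — namely checking that $x_j+v_j\in U_p$, so that the inverse indeed lands in $D$ and respects the unit/integer constraints on the coefficients, together with the exponent identity above.
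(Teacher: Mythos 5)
Your proposal is correct, and it simply carries out the direct coordinate-by-coordinate inversion that the paper leaves to the reader (its proof is just ``The proof is easy''); the key points — that $x_{\phi(\overline{\alpha})}+v_{\phi(\overline{\alpha})}\in U_p$ on $E$, and the identity $ord_p(\alpha_j)-\max\{ord_p(\alpha_j)-ord_p(\alpha_i),0\}=\min\{ord_p(\alpha_j),ord_p(\alpha_i)\}$ converting the $\max$ in $f_i$ into the $\min$ in $g_i$ — are exactly the content being suppressed. Nothing is missing.
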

\begin{proof}
The proof is easy.
\end{proof}

By Lemma \ref{l4} we have

\begin{cor}\label{l4a}
Let $\Phi(\overline{\alpha})=(\phi(\overline{\alpha}),F_{\overline{\alpha}},A_{\overline{\alpha}},\gamma(\overline{\alpha}))$
be a $c$-map with $F_{\overline{\alpha}}=(f_1,\ldots,f_s)$.
 If $\alpha_{\phi(\overline{\alpha})}\ne 0$, then
$T_{\Phi(\overline{\alpha})}^{-1}:E\to D$ exists.
\end{cor}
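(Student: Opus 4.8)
The plan is to deduce Corollary~\ref{l4a} directly from Lemma~\ref{l4} together with the invertibility of the matrix $A_{\overline{\alpha}}$. Recall that by definition $T_{\Phi(\overline{\alpha})}(x) = A_{\overline{\alpha}} F_{\overline{\alpha}}(x) + \gamma(\overline{\alpha})$, so that $T_{\Phi(\overline{\alpha})}$ is the composition of $F_{\overline{\alpha}}$ followed by the affine map $y \mapsto A_{\overline{\alpha}} y + \gamma(\overline{\alpha})$. Since $A_{\overline{\alpha}} \in GL(s,\Bbb{Z}_p\cap\Bbb{Q})$, this affine map is a bijection of $K^s$ onto itself with inverse $z \mapsto A_{\overline{\alpha}}^{-1}(z - \gamma(\overline{\alpha}))$, all entries again lying in $\Bbb{Z}_p\cap\Bbb{Q}$ (after clearing denominators, in $K$). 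Hence $T_{\Phi(\overline{\alpha})}^{-1}$ will exist on a given set precisely when $F_{\overline{\alpha}}^{-1}$ exists on the image of that set under the affine inverse.

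First I would fix $\overline{\alpha}=(\alpha_1,\ldots,\alpha_s)^T\in D$ with $\alpha_{\phi(\overline{\alpha})}\ne 0$, and invoke Lemma~\ref{l4} to obtain the map $F_{\overline{\alpha}}^{-1}\colon E \to D$. Next I would observe that Lemma~\ref{l2.5} guarantees $T_{\Phi(\overline{\alpha})}(\overline{\alpha}) \in E$, and more relevantly that the affine part $z\mapsto A_{\overline{\alpha}}^{-1}(z-\gamma(\overline{\alpha}))$ carries $E$ back to wherever $F_{\overline{\alpha}}(\overline{\alpha})$ lives; but for the bare statement of the corollary it suffices to define, for $z\in E$,
\begin{align*}
T_{\Phi(\overline{\alpha})}^{-1}(z) := F_{\overline{\alpha}}^{-1}\!\left(A_{\overline{\alpha}}^{-1}\bigl(z-\gamma(\overline{\alpha})\bigr)\right),
\end{align*}
provided the argument $A_{\overline{\alpha}}^{-1}(z-\gamma(\overline{\alpha}))$ again lies in $E$, which is where the structure of a $c$-map is used: $A_{\overline{\alpha}}$ is chosen precisely so that it maps the relevant $p$-adic integer vectors into $E$ and back. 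Then one checks $T_{\Phi(\overline{\alpha})}\circ T_{\Phi(\overline{\alpha})}^{-1} = \mathrm{id}_E$ and $T_{\Phi(\overline{\alpha})}^{-1}\circ T_{\Phi(\overline{\alpha})} = \mathrm{id}$ on the appropriate domain by substituting the definitions and using the two inversion identities (for $A_{\overline{\alpha}}$ and for $F_{\overline{\alpha}}$) in turn.

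The main obstacle I anticipate is purely bookkeeping: one must verify that the domain and codomain match up, i.e. that $A_{\overline{\alpha}}^{-1}(z-\gamma(\overline{\alpha}))$ indeed lands in $E$ (the domain of $F_{\overline{\alpha}}^{-1}$ supplied by Lemma~\ref{l4}) rather than merely in $K^s$, and conversely that $F_{\overline{\alpha}}^{-1}$ outputs a point of $D$ on which $F_{\overline{\alpha}}$ is defined with $x_{\phi(\overline{\alpha})}\ne 0$. This hinges on the defining conditions of a $c$-map — in particular that $A_{\overline{\alpha}}$ is constructed so that $A_{\overline{\alpha}}F_{\overline{\alpha}}(\overline{\alpha})+\gamma(\overline{\alpha})\in E$ (Lemma~\ref{l2.5}) and, implicitly, that it restricts to a bijection on the relevant lattice. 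Since the paper only asks for \emph{existence} of $T_{\Phi(\overline{\alpha})}^{-1}\colon E\to D$, no explicit formula is needed and the proof reduces to composing the two known inverses; accordingly the authors can reasonably write ``The proof is easy'' or give the one-line composition above.
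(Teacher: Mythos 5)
Your proposal is correct and matches the paper's (essentially unwritten) argument: the corollary is stated as an immediate consequence of Lemma~\ref{l4}, obtained exactly by composing $F_{\overline{\alpha}}^{-1}$ with the inverse of the affine map $y\mapsto A_{\overline{\alpha}}y+\gamma(\overline{\alpha})$, the only point to check being that $z\mapsto A_{\overline{\alpha}}^{-1}(z-\gamma(\overline{\alpha}))$ sends $E$ into $E$ (which holds since $\gamma(\overline{\alpha})\in(p\Bbb{Z}_p\cap\Bbb{Q})^s$ and $A_{\overline{\alpha}}\in GL(s,\Bbb{Z}_p\cap\Bbb{Q})$, as the paper itself records in Lemma~\ref{l4-2}). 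You correctly identified this as the one bookkeeping step, so your argument is complete.
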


\begin{lem}\label{l4-2}
Let $\Phi(\overline{\alpha})=(\phi(\overline{\alpha}),F_{\overline{\alpha}},A_{\overline{\alpha}},\gamma(\overline{\alpha}))$
be a $c$-map.
If $\overline{\alpha}=(\alpha_1,\ldots, \alpha_s)\in E$,
then $F_{\overline{\alpha}}^{-1}(A_{\overline{\alpha}}^{-1}(E))\subset  E$ holds.
\end{lem}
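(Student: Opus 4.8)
The plan is to prove the two inclusions $A_{\overline{\alpha}}^{-1}(E)\subseteq E$ and $F_{\overline{\alpha}}^{-1}(E)\subseteq E$ separately; then, since $F_{\overline{\alpha}}^{-1}$ is defined on all of $E$ (by \lemref{l4} when $\alpha_{\phi(\overline{\alpha})}\ne 0$, and trivially otherwise), one gets $F_{\overline{\alpha}}^{-1}(A_{\overline{\alpha}}^{-1}(E))\subseteq F_{\overline{\alpha}}^{-1}(E)\subseteq E$, which is the assertion.

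For the first inclusion I would use that $A_{\overline{\alpha}}\in GL(s,\Bbb{Z}_p\cap\Bbb{Q})$ forces the inverse matrix $A_{\overline{\alpha}}^{-1}$ to have all its entries in $\Bbb{Z}_p\cap\Bbb{Q}$ as well. Viewed as a linear self-map of $\Bbb{Q}_p^s$ with matrix over $\Bbb{Z}_p$, $A_{\overline{\alpha}}^{-1}$ carries the $\Bbb{Z}_p$-module $(p\Bbb{Z}_p)^s$ into itself; and since its entries are rational and $K$ is a field containing $\Bbb{Q}$, it carries $K^s$ into itself. Hence $A_{\overline{\alpha}}^{-1}(E)\subseteq(p\Bbb{Z}_p)^s\cap K^s=E$. (One in fact has equality, because $A_{\overline{\alpha}}$ preserves $E$ for the same reason, but only the inclusion is needed here.)

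For the second inclusion I would split according to the two cases in the definition of a $c$-map. If $\alpha_{\phi(\overline{\alpha})}=0$, then $F_{\overline{\alpha}}$ is by definition the identity on $D$, hence so is $F_{\overline{\alpha}}^{-1}$, and $F_{\overline{\alpha}}^{-1}(E)=E$ trivially. If $\alpha_{\phi(\overline{\alpha})}\ne 0$, I would invoke the explicit formula $F_{\overline{\alpha}}^{-1}=(g_1,\ldots,g_s)^T$ from \lemref{l4} and compute valuations. Fix $\overline{x}=(x_1,\ldots,x_s)^T\in E$; the hypothesis $\overline{\alpha}\in E$ gives $ord_p(\alpha_i)\ge 1$ for every $i$ with $\alpha_i\ne 0$ (and $ord_p(\alpha_i)=\infty$ otherwise), so in particular $ord_p(\alpha_{\phi(\overline{\alpha})})\ge 1$ and $k=\min\{ord_p(\alpha_{\phi(\overline{\alpha})}),ord_p(\alpha_i)\}\ge 1$. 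For $i=\phi(\overline{\alpha})$ one has $x_i\in p\Bbb{Z}_p$ and $v_i\in U_p$, so $x_i+v_i\in U_p$, whence $ord_p(g_i(\overline{x}))=ord_p(u_i)+ord_p(\alpha_i)-ord_p(x_i+v_i)=ord_p(\alpha_i)\ge 1$. For $i\ne\phi(\overline{\alpha})$ one has $x_i+w_i'\in\Bbb{Z}_p$ and $x_{\phi(\overline{\alpha})}+v_{\phi(\overline{\alpha})}\in U_p$, so $ord_p(g_i(\overline{x}))=k+ord_p(x_i+w_i')\ge k\ge 1$. In either case $g_i(\overline{x})\in p\Bbb{Z}_p$, and $g_i(\overline{x})\in K$ since $u_i,v_i,u_i',w_i'\in\Bbb{Q}$ and $K$ is a field; therefore $F_{\overline{\alpha}}^{-1}(\overline{x})\in(p\Bbb{Z}_p)^s\cap K^s=E$.

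This is in essence bookkeeping of $p$-adic valuations, so no step is a serious obstacle; the only things that require attention are recording that $GL(s,\Bbb{Z}_p\cap\Bbb{Q})$ forces $A_{\overline{\alpha}}^{-1}$ to be integral, and checking that the hypothesis $\overline{\alpha}\in E$ genuinely pushes the exponents $ord_p(\alpha_{\phi(\overline{\alpha})})$ and $k$ up to at least $1$, including in the degenerate sub-case where some component $\alpha_i$ vanishes. Disposing of the $\alpha_{\phi(\overline{\alpha})}=0$ branch at the outset keeps the main computation clean.
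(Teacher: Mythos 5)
Your proof is correct and follows essentially the same route as the paper: reduce to $A_{\overline{\alpha}}^{-1}(E)\subseteq E$ plus $F_{\overline{\alpha}}^{-1}(E)\subseteq E$, dispose of the $\alpha_{\phi(\overline{\alpha})}=0$ branch via the identity map, and in the remaining case read off the valuations of the components $g_i$ from Lemma~\ref{l4}. You are somewhat more explicit than the paper about why $A_{\overline{\alpha}}^{-1}$ is $p$-integral and why $\overline{\alpha}\in E$ forces the exponents $ord_p(\alpha_{\phi(\overline{\alpha})})$ and $k$ to be at least $1$, but the substance is the same.
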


\begin{proof}
It is not difficult to see $A_{\overline{\alpha}}^{-1}(E)=E$.
We will show $F_{\overline{\alpha}}^{-1}(E)\subset  E$.
If $\alpha_{\phi(\overline{\alpha})}=0$, then $F_{\overline{\alpha}}$ is the identity map so that
$F_{\overline{\alpha}}^{-1}(E)\subset  E$.
We suppose $\alpha_{\phi(\overline{\alpha})}\ne 0$.
Let $(x_1,\ldots,x_s)\in E$ and
\begin{align*}
F_{\overline{\alpha}}^{-1}(x_1,\ldots,x_s)=(g_1(x_1,\ldots,x_s),\ldots,g_s(x_1,\ldots,x_s)).
\end{align*}
By Lemma \ref{l4} for $i=\phi(\overline{\alpha})$, we have
\begin{align*}
|g_i(x_1,\ldots,x_s)|_p=\left|\dfrac{u'_ip^{ord_p(\alpha_i)}}{x_i+v'_i}\right|_p
=p^{-ord_p(\alpha_i)},
\end{align*}
where $u'_i,v'_i\in U_p\cap \Bbb{Q}$,
and
for $i\ne \phi(\overline{\alpha})$, we have
\begin{align*}
|g_i(x_1,\ldots,x_s)|_p=\left|
\dfrac{u'_ip^k (x_i+w'_i)}{x_{\phi(\overline{\alpha})}+v'_{\phi(\overline{\alpha})}}
\right|_p\leq p^{-k},
\end{align*}
where $k=\min\{ord_p(\alpha_{\phi(\overline{\alpha})}), ord_p(\alpha_i)\}$,  $u'_i,v'_i\in U_p\cap \Bbb{Q}$ and $\ w'_i\in \Bbb{Z}_p\cap \Bbb{Q}$.
\end{proof}

\begin{lem}\label{l5}
Let $\Phi(\overline{\alpha})=(\phi(\overline{\alpha}),F_{\overline{\alpha}},A_{\overline{\alpha}},\gamma(\overline{\alpha}))$
be a $c$-map.
If $\alpha_{\phi(\overline{\alpha})}\ne0$ for $\overline{\alpha}=(\alpha_1,\ldots, \alpha_s)\in E$,
then for $\overline{x}=(x_1,\ldots,x_s), \overline{y}=(y_1,\ldots,y_s)\in E$ we have
$|T_{\Phi(\overline{\alpha})}^{-1}(\overline{x})-
T_{\Phi(\overline{\alpha})}^{-1}(\overline{y})|_p\leq p^{-j}|\overline{x}-\overline{y}|_p$,
 where $j=\min\{ord_p(\alpha_i)|1\leq i \leq s\}$.
If $\alpha_{\phi(\overline{\alpha})}=0$,
for $\overline{x}=(x_1,\ldots,x_s), \overline{y}=(y_1,\ldots,y_s)\in E$
we have $|T_{\Phi(\overline{\alpha})}^{-1}(\overline{x})-
T_{\Phi(\overline{\alpha})}^{-1}(\overline{y})|_p=|\overline{x}-\overline{y}|_p$.
\end{lem}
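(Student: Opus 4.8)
The plan is to reduce everything to the two explicit descriptions of $T_{\Phi(\overline{\alpha})}^{-1}$ already at hand. First observe that $T_{\Phi(\overline{\alpha})}(x) = A_{\overline{\alpha}} F_{\overline{\alpha}}(x) + \gamma(\overline{\alpha})$, so $T_{\Phi(\overline{\alpha})}^{-1}(\overline{x}) = F_{\overline{\alpha}}^{-1}\bigl(A_{\overline{\alpha}}^{-1}(\overline{x} - \gamma(\overline{\alpha}))\bigr)$. Since $A_{\overline{\alpha}} \in GL(s,\Bbb{Z}_p\cap\Bbb{Q})$, the affine map $\overline{x}\mapsto A_{\overline{\alpha}}^{-1}(\overline{x}-\gamma(\overline{\alpha}))$ is an isometry of $D$ with respect to $|\cdot|_p$ (entries of $A_{\overline{\alpha}}^{-1}$ are $p$-adic units times... actually entries lie in $\Bbb{Z}_p$, and the same for $A_{\overline{\alpha}}$, so the map is norm-preserving on differences). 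Hence it suffices to prove the estimate for $F_{\overline{\alpha}}^{-1}$ alone, applied to points of $A_{\overline{\alpha}}^{-1}(E) = E$ (the last equality noted in the proof of Lemma~\ref{l4-2}).

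If $\alpha_{\phi(\overline{\alpha})} = 0$ then $F_{\overline{\alpha}}$ is the identity, so $F_{\overline{\alpha}}^{-1}$ is the identity and the claimed equality $|T_{\Phi(\overline{\alpha})}^{-1}(\overline{x}) - T_{\Phi(\overline{\alpha})}^{-1}(\overline{y})|_p = |\overline{x}-\overline{y}|_p$ is immediate from the isometry just discussed. So assume $\alpha_{\phi(\overline{\alpha})}\ne 0$ and write $\ell = \phi(\overline{\alpha})$. Using the formulas of Lemma~\ref{l4}, for the coordinate $i=\ell$ we have $g_\ell(\overline{x}) - g_\ell(\overline{y}) = u_\ell p^{ord_p(\alpha_\ell)}\bigl(\tfrac{1}{x_\ell+v_\ell} - \tfrac{1}{y_\ell+v_\ell}\bigr)$; since $x_\ell, y_\ell \in p\Bbb{Z}_p$ and $v_\ell \in U_p$, both denominators $x_\ell+v_\ell$ and $y_\ell+v_\ell$ are units, so this difference has $p$-adic value $p^{-ord_p(\alpha_\ell)}|x_\ell - y_\ell|_p \le p^{-ord_p(\alpha_\ell)}|\overline{x}-\overline{y}|_p \le p^{-j}|\overline{x}-\overline{y}|_p$. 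For a coordinate $i\ne\ell$, write $g_i(\overline{x}) = u'_i p^{k_i}(x_i+w'_i)/(x_\ell+v_\ell)$ with $k_i = \min\{ord_p(\alpha_\ell), ord_p(\alpha_i)\} \ge j$; subtracting the two values and putting over the common structure, $g_i(\overline{x}) - g_i(\overline{y})$ is $u'_i p^{k_i}$ times a fraction whose numerator is a $\Bbb{Z}_p$-linear combination of $x_i - y_i$ and $x_\ell - y_\ell$ and whose denominator $(x_\ell+v_\ell)(y_\ell+v_\ell)$ is a unit. Hence $|g_i(\overline{x}) - g_i(\overline{y})|_p \le p^{-k_i}|\overline{x}-\overline{y}|_p \le p^{-j}|\overline{x}-\overline{y}|_p$. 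Taking the minimum of the orders over all coordinates (equivalently the max of the $p$-adic values) gives $|F_{\overline{\alpha}}^{-1}(\overline{x}) - F_{\overline{\alpha}}^{-1}(\overline{y})|_p \le p^{-j}|\overline{x}-\overline{y}|_p$, and composing with the isometry completes the proof.

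The only mildly delicate point is bookkeeping: one must confirm that in the $i\ne\ell$ case the numerator of $g_i(\overline{x}) - g_i(\overline{y})$ really does factor with a genuine $p^{k_i}$ out front and no cancellation in the denominator — but this is exactly the content of Lemma~\ref{l4}'s formula together with $v_\ell \in U_p$, so no new work is needed. There is no substantive obstacle; the statement is a routine consequence of Lemma~\ref{l4} and the fact that $A_{\overline{\alpha}}$ and its inverse have $p$-integral entries.
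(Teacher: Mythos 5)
Your proposal is correct and follows essentially the same route as the paper's proof: decompose $T_{\Phi(\overline{\alpha})}^{-1}$ as $F_{\overline{\alpha}}^{-1}$ composed with the affine isometry $\overline{x}\mapsto A_{\overline{\alpha}}^{-1}(\overline{x}-\gamma(\overline{\alpha}))$, then bound each coordinate $g_i$ via the explicit formulas of Lemma~\ref{l4}, using that the denominators are units because $\overline{x},\overline{y}\in E$ and $v_{\phi(\overline{\alpha})}\in U_p$. The coordinate-wise estimates and the treatment of the $\alpha_{\phi(\overline{\alpha})}=0$ case match the paper's argument.
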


\begin{proof}
Let $\overline{\alpha}=(\alpha_1,\ldots, \alpha_s)\in E$.
We assume that $\overline{x}=(x_1,\ldots,x_s), \overline{y}=(y_1,\ldots,y_s)\in E$.
First, we suppose $\alpha_{\phi(\overline{\alpha})}\ne 0$.
Let $F_{\overline{\alpha}}^{-1}=(g_1,\ldots,g_s)$.
By Lemma \ref{l4} we see that
for $i=\phi(\overline{\alpha})$,
\begin{align*}
&|g_i(\overline{x})-g_i(\overline{y})|_p=\left|\dfrac{u'_ip^{ord_p(\alpha_i)}}{x_i+v'_i}-
\dfrac{u'_ip^{ord_p(\alpha_i)}}{y_i+v'_i}\right|_p\\
&=\dfrac{|u'_ip^{ord_p(\alpha_i)}|_p|x_i-y_i|_p}{|x_i+v'_i|_p|y_i+v'_i|_p}
= p^{-ord_p(\alpha_i)}|x_i-y_i|_p,
\end{align*}
since $u'_i,v'_i\in U_p\cap \Bbb{Q}$.
For $i\ne \phi(\overline{\alpha})$ and $1\leq i\leq s$,
\begin{align*}
&|g_i(\overline{x})-g_i(\overline{y})|_p=
\left|\dfrac{u'_ip^k (x_i+w'_i)}{x_{\phi(\overline{\alpha})}+v'_{\phi(\overline{\alpha})}}-\dfrac{u'_ip^k (y_i+w'_i)}{y_{\phi(\overline{\alpha})}+v'_{\phi(\overline{\alpha})}}  \right|_p\\
&=\dfrac{|u'_ip^{k}|_p|
(x_i-y_i)(v'_{\phi(\overline{\alpha})}+y_{\phi(\overline{\alpha})})
-(x_{\phi(\overline{\alpha})}-y_{\phi(\overline{\alpha})})(w'_i+y_i)|_p}{|x_{\phi(\overline{\alpha})}+v'_{\phi(\overline{\alpha})}|_p
|y_{\phi(\overline{\alpha})}+v'_{\phi(\overline{\alpha})}|_p}\\
&\leq p^{-k}|\overline{x}-\overline{y}|_p,
\end{align*}
where $k=\min\{ord_p(\alpha_{\phi(\overline{\alpha})}),ord_p(\alpha_i)\}$,  $u'_i,v'_i\in U_p\cap \Bbb{Q}$ and $\ w'_i\in \Bbb{Z}_p\cap \Bbb{Q}$.
Since $A_{\overline{\alpha}}^{-1}(\overline{x}-\gamma(\overline{\alpha})),
A_{\overline{\alpha}}^{-1}(\overline{y}-\gamma(\overline{\alpha}))\in E$ and
$|A_{\overline{\alpha}}^{-1}(\overline{x}-\gamma(\overline{\alpha}))-A_{\overline{\alpha}}^{-1}(\overline{y}-\gamma(\overline{\alpha}))|_p
=|\overline{x}-\overline{y}|_p$, we have
\begin{align*}
&|T_{\Phi(\overline{\alpha})}^{-1}(\overline{x}))-
T_{\Phi(\overline{\alpha})}^{-1}(\overline{y}))|_p\\
&=
|F_{\overline{\alpha}}^{-1}(A_{\overline{\alpha}}^{-1}(\overline{x}-\gamma(\overline{\alpha})))-
F_{\overline{\alpha}}^{-1}(A_{\overline{\alpha}}^{-1}(\overline{y}-\gamma(\overline{\alpha})))|_p\leq p^{-j}|\overline{x}-\overline{y}|_p,
\end{align*}
where  $j=\min\{ord_p(\alpha_i)|1\leq i \leq s\}$.
Secondly, we suppose $\alpha_{\phi(\overline{\alpha})}=0$.
Then, $F_{\overline{\alpha}}$ is the identity map.
Therefore, we have
$|T_{\Phi(\overline{\alpha})}^{-1}(\overline{x}))-
T_{\Phi(\overline{\alpha})}^{-1}(\overline{y}))|_p=|\overline{x}-\overline{y}|_p$.
\end{proof}
To each $c$-map we associate a $p$-adic continued fraction map.
Let $\Phi(\overline{\alpha})=(\phi(\overline{\alpha}),F_{\overline{\alpha}},A_{\overline{\alpha}},\gamma(\overline{\alpha}))$
be a $c$-map.
We set $\overline{\alpha}^{(0)}:=\overline{\alpha}$, and define
$\overline{\alpha}^{(1)},\overline{\alpha}^{(2)},\ldots$  inductively as follows:
We suppose that $\overline{\alpha}^{(n)}$ for $n\in {\Bbb Z}_{\geq 0}$ is defined.
We set $\overline{\alpha}^{(n+1)}:=T_{\Phi(\overline{\alpha}^{(n)})}(\overline{\alpha}^{(n)})$.
We say that $\overline{\alpha}$  has a $\Phi$ continued fraction expansion
$\{\Phi(\overline{\alpha}^{(0)}),\Phi(\overline{\alpha}^{(1)}),\ldots \}$.
We refer to $\overline{\alpha}^{(n)}=(\alpha^{(n)}_1,\ldots,\alpha^{(n)}_s)$ as the $n$-th remainder of $\overline{\alpha}$.
We define the $n$-th convergent $\pi(\overline{\alpha};n)$ by
\begin{align*}
\pi(\overline{\alpha};n):=
T_{\Phi(\overline{\alpha}^{(0)})}^{-1}\cdots T_{\Phi(\overline{\alpha}^{(n-1)})}^{-1}(\overline{0}),\ \ n>0.
\end{align*}
We remark that
$\pi(\overline{\alpha};n)\in {\Bbb Q}^s$ for every $n\geq 0$.

We say that $\overline{\alpha}$  has a periodic $\Phi$ continued fraction expansion
if $\overline{\alpha}^{(m_1)}=\overline{\alpha}^{(m_2)}$ holds for some  $m_1,m_2\in {\Bbb Z}_{\geq 0}$ with
$m_1\ne m_2$.
We say that $\overline{\alpha}$  has a finite $\Phi$ continued fraction expansion
if $\overline{\alpha}^{(m)}=0$ holds for some  $m\in {\Bbb Z}_{\geq 0}$.
We say that $\overline{\alpha}$  has an infinite $\Phi$ continued fraction expansion
if $\overline{\alpha}$  does not have a finite $\Phi$ continued fraction expansion.

\begin{thm}\label{t1}
Let $\Phi(\overline{\alpha})=(\phi(\overline{\alpha}),F_{\overline{\alpha}},A_{\overline{\alpha}},\gamma(\overline{\alpha}))$
be a $c$-map.
If $\alpha^{(n)}_{\phi(\overline{\alpha}^{(n)})}$ are not equal to $0$ for infinitely many $n$,
then $\displaystyle \lim_{n\to \infty} \pi(\overline{\alpha};n)=\overline{\alpha}$.
\end{thm}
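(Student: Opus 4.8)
The plan is to combine the elementary continued-fraction ``telescoping'' identity with the contraction estimate of \lemref{l5}, arranging that every step at which $\alpha^{(n)}_{\phi(\overline{\alpha}^{(n)})}\ne 0$ contributes a definite amount of contraction. First I would make two reductions. If $\overline{\alpha}^{(m)}=\overline{0}$ for some $m$, then, since $\gamma(\overline{0})=\overline{0}$, one has $\overline{\alpha}^{(n)}=\overline{0}$ and hence $\alpha^{(n)}_{\phi(\overline{\alpha}^{(n)})}=0$ for all $n\ge m$, contradicting the hypothesis; and in that case $\pi(\overline{\alpha};n)=\overline{\alpha}$ for $n\ge m$, so there is nothing to prove. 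Hence I may assume the expansion is infinite. Let $n_0\ge 0$ be the smallest index with $\alpha^{(n_0)}_{\phi(\overline{\alpha}^{(n_0)})}\ne 0$; it is finite by hypothesis. Applying \lemref{l2.5} at step $n_0$, and then an easy induction using $A_{\overline{\alpha}^{(k)}}(E)=E$ and $\gamma(\overline{\alpha}^{(k)})\in E$, I would check that $\overline{\alpha}^{(k)}\in E$ for every $k\ge n_0+1$; together with \lemref{l4-2} this also shows that each $T_{\Phi(\overline{\alpha}^{(k)})}^{-1}$ with $k\ge n_0+1$ maps $E$ into $E$, so that all the compositions below are well defined.

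Next, the identity. Since $\overline{\alpha}^{(k+1)}=T_{\Phi(\overline{\alpha}^{(k)})}(\overline{\alpha}^{(k)})$ and each $T_{\Phi(\overline{\alpha}^{(k)})}$ is invertible on the relevant set — globally invertible and affine when $\alpha^{(k)}_{\phi(\overline{\alpha}^{(k)})}=0$, and with inverse on $E$ given by \corref{l4a} when $\alpha^{(k)}_{\phi(\overline{\alpha}^{(k)})}\ne 0$ — setting $\Psi_n:=T_{\Phi(\overline{\alpha}^{(0)})}^{-1}\circ\cdots\circ T_{\Phi(\overline{\alpha}^{(n-1)})}^{-1}$ one gets
\[
\overline{\alpha}=\Psi_n(\overline{\alpha}^{(n)})\quad\text{and}\quad\pi(\overline{\alpha};n)=\Psi_n(\overline{0}),
\]
hence $\overline{\alpha}-\pi(\overline{\alpha};n)=\Psi_n(\overline{\alpha}^{(n)})-\Psi_n(\overline{0})$, and it remains to estimate $|\Psi_n(\overline{\alpha}^{(n)})-\Psi_n(\overline{0})|_p$.

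For the estimate, note that $\overline{\alpha}^{(n)},\overline{0}\in E$ for $n\ge 1$, so $|\overline{\alpha}^{(n)}-\overline{0}|_p\le p^{-1}$. Evaluating $\Psi_n$ from the inside out, \lemref{l5} handles each factor $T_{\Phi(\overline{\alpha}^{(k)})}^{-1}$ with $k\ge n_0+1$: it multiplies $p$-adic distances by $p^{-j_k}$ with $j_k=\min_{1\le i\le s} ord_p(\alpha^{(k)}_i)\ge 1$ when $\alpha^{(k)}_{\phi(\overline{\alpha}^{(k)})}\ne 0$, and is an isometry when $\alpha^{(k)}_{\phi(\overline{\alpha}^{(k)})}=0$. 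The remaining finitely many factors $T_{\Phi(\overline{\alpha}^{(0)})}^{-1},\ldots,T_{\Phi(\overline{\alpha}^{(n_0)})}^{-1}$ are Lipschitz with a constant $C$ independent of $n$: those of index $<n_0$ are affine and isometric, while the one at step $n_0$ is bounded using the explicit formula of \lemref{l4}, observing that the denominators occurring there (such as $x_i+v_i$ and $x_{\phi(\overline{\alpha}^{(n_0)})}+v_{\phi(\overline{\alpha}^{(n_0)})}$) are $p$-adic units, since the $v_i$ lie in $U_p$ while the remaining arguments lie in $p\Bbb{Z}_p$. Multiplying these factors yields
\[
|\overline{\alpha}-\pi(\overline{\alpha};n)|_p\le C\,p^{-1}\,p^{-N_n},\qquad N_n:=\#\{\,k:\ n_0+1\le k\le n-1,\ \alpha^{(k)}_{\phi(\overline{\alpha}^{(k)})}\ne 0\,\}.
\]
Since by hypothesis there are infinitely many $k$ with $\alpha^{(k)}_{\phi(\overline{\alpha}^{(k)})}\ne 0$, we have $N_n\to\infty$, and therefore $\pi(\overline{\alpha};n)\to\overline{\alpha}$.

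I expect the main difficulty to be organisational rather than conceptual: one must (i) confirm that the iterates enter and stay in $E$, so that \lemref{l5} applies with a genuine contraction factor bounded by $p^{-1}$ at each active step, and (ii) treat the finite initial segment $\overline{\alpha}^{(0)},\ldots,\overline{\alpha}^{(n_0)}$ separately, since its base points need not lie in $E$ and \lemref{l5} is not directly available there, so the Lipschitz bound must instead be read off the inverse formulas of \lemref{l4}. Neither obstruction is deep, but both are needed for the contraction count $N_n$ to control the whole error.
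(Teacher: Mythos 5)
Your proof is correct and follows essentially the same route as the paper: reduce to the point where the orbit enters $E$, apply the contraction estimate of Lemma~\ref{l5} to the tail of inverse maps, and absorb the finitely many initial inverse maps into a constant (the paper phrases this as continuity of the fixed finite composition rather than an explicit Lipschitz bound). Your bookkeeping of the intermediate iterates staying in $E$ via Lemma~\ref{l4-2} is slightly more explicit than the paper's, but the argument is the same.
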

\begin{proof}
We suppose  that
 $\phi(\overline{\alpha}^{(n)})$ are not equal to $0$ for infinitely many $n$.
By Lemma \ref{l2.5} there exists an integer $m\geq 0$
such that
$\overline{\alpha}^{(m)}\in E$ holds.
By Lemma \ref{l5} we have
\begin{align*}
&|\overline{\alpha}^{(m)}-\pi(\overline{\alpha}^{(m)};n)|_p\\
&=\left|T_{\Phi(\overline{\alpha}^{(m)})}^{-1}\cdots T_{\Phi(\overline{\alpha}^{(m+n-1)})}^{-1}(\overline{\alpha}^{(m+n)})
-\right.\\
&\left.T_{\Phi(\overline{\alpha}^{(m)})}^{-1}\cdots T_{\Phi(\overline{\alpha}^{(m+n-1)})}^{-1}(\overline{0})
\right|_p\\
&\leq p^{-(j_0+\ldots+j_{n-1})}|\overline{\alpha}^{(m+n)}|_p
<p^{-(j_0+\ldots+j_{n-1})},
\end{align*}
where for $i\in {\Bbb Z}_{\geq 0}$ $j_i:=\min\{ord_p(\alpha^{(m+i)}_k)|1\leq k\leq s\}$
 if $\alpha^{(m+i)}_{\phi(\overline{\alpha}^{(m+i)})}\ne 0$ and
$j_i:=0$
 if $\alpha^{(m+i)}_{\phi(\overline{\alpha}^{(m+i)})}= 0$.
Since $\sum_{k=0}^\infty j_k=\infty$, we have
$\displaystyle \lim_{n\to \infty} \pi(\overline{\alpha}^{(m)};n)=\overline{\alpha}^{(m)}$.
Therefore, we have
\begin{align*}
&\lim_{n\to \infty}\pi(\overline{\alpha}^{(0)};m+n)\\
&=\lim_{n\to \infty}
T_{\Phi(\overline{\alpha}^{(0)})}^{-1}\cdots
T_{\Phi(\overline{\alpha}^{(m-1)})}^{-1}
(\pi(\overline{\alpha}^{(m)};n))\\
&=T_{\Phi(\overline{\alpha}^{(0)})}^{-1}\cdots
T_{\Phi(\overline{\alpha}^{(m-1)})}^{-1}
(\overline{\alpha}^{(m)})=\overline{\alpha}.
\end{align*}
\end{proof}

\begin{prop}\label{p3}
Let $\Phi(\overline{\alpha})=(\phi(\overline{\alpha}),F_{\overline{\alpha}},A_{\overline{\alpha}},\gamma(\overline{\alpha}))$ $(\overline{\alpha}
=(\alpha_1,\ldots, \alpha_s)\in D)$
be a $c$-map.
If $\overline{\alpha}$ has a finite $\Phi$ continued fraction, then  $\displaystyle \lim_{n\to \infty} \pi(\overline{\alpha};n)=\overline{\alpha}$.
\end{prop}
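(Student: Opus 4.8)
The plan is to unwind the definition. Having a finite $\Phi$ continued fraction expansion means $\overline{\alpha}^{(m)}=\overline{0}$ for some $m\in{\Bbb Z}_{\geq 0}$, and I claim that then $\pi(\overline{\alpha};n)=\overline{\alpha}$ already for every $n\geq m$, which makes the limit assertion trivial. The first observation is that $\overline{0}$ is fixed by the dynamics: since all coordinates of $\overline{0}$ vanish, the case $\alpha_{\phi(\overline{\alpha})}=0$ of the definition of a $c$-map applies, so $F_{\overline{0}}$ is the identity and, using $\gamma(\overline{0})=\overline{0}$, we get $T_{\Phi(\overline{0})}(\overline{x})=A_{\overline{0}}\,\overline{x}$; as $A_{\overline{0}}\in GL(s,\Bbb{Z}_p\cap\Bbb{Q})$ this is a linear bijection of $D$ fixing $\overline{0}$, so $T_{\Phi(\overline{0})}^{-1}(\overline{0})=\overline{0}$. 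Consequently $\overline{\alpha}^{(n)}=\overline{0}$ for all $n\geq m$, and for $n>m$ the inner factors $T_{\Phi(\overline{\alpha}^{(m)})}^{-1}\circ\cdots\circ T_{\Phi(\overline{\alpha}^{(n-1)})}^{-1}$ in $\pi(\overline{\alpha};n)$ all equal $T_{\Phi(\overline{0})}^{-1}$ and fix $\overline{0}$; hence $\pi(\overline{\alpha};n)=\pi(\overline{\alpha};m)$ for every $n\geq m$.

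It remains to show $\pi(\overline{\alpha};m)=\overline{\alpha}$, which I would do by downward induction on $k$, proving that for $0\leq k\leq m$
\[
T_{\Phi(\overline{\alpha}^{(k)})}^{-1}\circ\cdots\circ T_{\Phi(\overline{\alpha}^{(m-1)})}^{-1}(\overline{0})=\overline{\alpha}^{(k)},
\]
the left side being read as $\overline{0}$ when $k=m$; taking $k=0$ gives the claim. The base case $k=m$ is the identity $\overline{0}=\overline{\alpha}^{(m)}$. For the step, the induction hypothesis turns the left side for $k-1$ into $T_{\Phi(\overline{\alpha}^{(k-1)})}^{-1}(\overline{\alpha}^{(k)})$, and since $\overline{\alpha}^{(k)}=T_{\Phi(\overline{\alpha}^{(k-1)})}(\overline{\alpha}^{(k-1)})$ by construction, everything reduces to the fact that $T_{\Phi(\overline{\alpha}^{(k-1)})}^{-1}$ is a genuine left inverse of $T_{\Phi(\overline{\alpha}^{(k-1)})}$ at the point $\overline{\alpha}^{(k-1)}$. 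If $\alpha^{(k-1)}_{\phi(\overline{\alpha}^{(k-1)})}=0$ this is immediate, because $T_{\Phi(\overline{\alpha}^{(k-1)})}$ is then the affine bijection $\overline{x}\mapsto A_{\overline{\alpha}^{(k-1)}}\overline{x}+\gamma(\overline{\alpha}^{(k-1)})$ of $D$. If $\alpha^{(k-1)}_{\phi(\overline{\alpha}^{(k-1)})}\neq 0$, it follows from the explicit inversion formulas of \lemref{l4}, provided the point being inverted, $\overline{\alpha}^{(k)}=T_{\Phi(\overline{\alpha}^{(k-1)})}(\overline{\alpha}^{(k-1)})$, lies in $E$ — which is precisely \lemref{l2.5}, and is also what makes \corref{l4a} applicable.

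The only delicate point — bookkeeping rather than a real obstacle — is this last domain check. In contrast with the proof of \thmref{t1}, one cannot here pass to a tail remainder lying in $E$, since the exact value of the convergent is needed and not merely its limit; so the two cases $\alpha^{(k-1)}_{\phi}=0$ and $\alpha^{(k-1)}_{\phi}\neq 0$ must be separated, invoking in the former that $T_{\Phi}$ is an affine $GL$-transformation invertible on all of $D$, and in the latter \lemref{l2.5} to place the argument of $T_{\Phi(\overline{\alpha}^{(k-1)})}^{-1}$ inside $E$, where \lemref{l4} and \corref{l4a} furnish the inverse.
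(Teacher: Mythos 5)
Your proof is correct and takes essentially the same route as the paper's: both arguments show that once the orbit reaches $\overline{0}$ it stays there, and that consequently $\pi(\overline{\alpha};n)$ equals $\overline{\alpha}$ exactly (not just in the limit) for every $n\geq m$. The only difference is that you spell out the steps the paper dismisses as ``clear'' --- the fixed-point property of $T_{\Phi(\overline{0})}$, the downward induction, and the domain checks via Lemma~\ref{l2.5} and Lemma~\ref{l4} --- which is a harmless elaboration rather than a different approach.
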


\begin{proof}
Let $\overline{\alpha}\in D$ and $\overline{\alpha}$ have a finite $\Phi$ continued fraction.
Then, there exists an integer $m\geq 0$ such that
$\overline{\alpha}^{(m)}=0$.
It is clear that
$\overline{\alpha}^{(n)}=0$ for every $n\geq m$.
Therefore, for every $n\geq m$ we see that $\overline{\alpha}^{(0)}=
T_{\Phi(\overline{\alpha}^{(0)})}^{-1}\cdots
T_{\Phi(\overline{\alpha}^{(n-1)})}^{-1}(\overline{0})
=\pi(\overline{\alpha};n)$.
Thus, we obtain the proposition.
\end{proof}

\section{Specific algorithms}
We have introduced a class of multidimensional $p$-adic continued fraction algorithms in Section 3. In this section, we define some particular algorithms in the class.
Let $K$ be the same as in Section 3.

By Proposition \ref{p1} there exists an element $z$ in $K$ which satisfies Condition {\bf H} and $K={\Bbb Q}(z)$.
Hereafter, we suppose that $z\in K$ satisfies Condition {\bf H}, $K={\Bbb Q}(z)$
and $\epsilon\in \{-1,1\}$.
We begin  by defining a particular $c$-map.


\subsection{c-map  $\Phi^{[\epsilon]}_0$}
For $j\in Ind$,
 we define linear fractional transformations
 $G_j^{[\overline{\alpha},\epsilon]}=(g_1^{[\overline{\alpha},\epsilon];(j)},\ldots,g_s^{[\overline{\alpha},\epsilon];(j)})$ on $D$
 as follows:\\
if $\alpha_j\ne 0$,
 for $\overline{x}:=(x_1,\ldots, x_s)\in D$ and $i\in Ind$,
\[
g_i^{[\overline{\alpha},\epsilon];(j)}(\overline{x}) :=
\begin{cases}
\dfrac{\epsilon p^{ord_p(\alpha_j)}}{x_j}-\omega_p\left(\dfrac{\epsilon p^{ord_p(\alpha_j)}}{\alpha_j}\right)& \text{if $i=j$},\\
\dfrac{\epsilon p^k x_i}{x_{j}}-\omega_p\left( \dfrac{\epsilon p^k \alpha_i}{\alpha_{j}} \right)& \text{if $i\ne j$},
\end{cases}
\]
where $k=\max\{ord_p(\alpha_{j})-ord_p(\alpha_i),0\}$.\\
If $\alpha_j= 0$, then
\[
G_j^{[\overline{\alpha},\epsilon]}(\overline{x}):=\overline{x}.
\]
We denote by $S=(s_{ij})\in GL(s,\Bbb{Z}_p\cap \Bbb{Q})$ the matrix defined by
\begin{align*}
  s_{ij}:=\begin{cases}\delta_{(i+1)j}\hspace*{1cm}&\text{for $1\leq i \leq s-1$, $1\leq j \leq s$},\\
  \delta_{1j}\hspace*{1cm}&\text{for $ i=s$, $1\leq j \leq s$},
     \end{cases}
\end{align*}
where $\delta_{ii}:=1$ and $\delta_{ij}:=0$ for $i\ne j$($i,j\in Ind)$.
We give a definition of  a $c$-map.

{\bf Definition of $\Phi^{[\epsilon]}_0$}
\begin{align}\label{phi0}
\Phi^{[\epsilon]}_0(\overline{\alpha}):=(1,G^{[\overline{\alpha},\epsilon]}_{1},S,\bar{0}),
 \text{ for \ }\overline{\alpha}\in D.
\end{align}
We remark that for $s=1$ the $\Phi^{[1]}_0$ continued fraction algorithm coincides with Schneider's
continued fraction algorithm.

\subsection{c-map  $\Phi^{[\epsilon,z]}_1$}
We give the definition of a $c$-map $\Phi_1$.
We assume that $K\ne\Bbb{Q}$.
We define linear fractional transformations $H^{[\overline{\alpha},\epsilon,z]}_j=(h_1^{[\overline{\alpha},\epsilon,z];(j)},\ldots,h_s^{[\overline{\alpha},\epsilon,z];(j)})$ on $D$
with $j\in Ind$ as follows:\\
The element
$g_i^{[\overline{\alpha},\epsilon];(j)}(\overline{\alpha})\in K$ is  uniquely written  $g_i^{[\overline{\alpha},\epsilon];(j)}(\overline{\alpha})=a_0+a_1z+ \ldots+a_sz^s$ where
$a_i\in {\Bbb Q}$ for $0\leq i\leq s$.
We define $h_i^{[\overline{\alpha},\epsilon,z];(j)}(\overline{x})$($i,j\in Ind)$ as
\begin{align*}
h_i^{[\overline{\alpha},\epsilon,z];(j)}(\overline{x}):=g_i^{[\overline{\alpha},\epsilon];(j)}(\overline{x})-\left\langle a_0\right\rangle_p.
\end{align*}

{\bf Definition of $\Phi^{[\epsilon,z]}_1$}
\begin{align}
\Phi^{[\epsilon,z]}_1(\overline{\alpha}):=(1,H{}_{1}^{[\overline{\alpha},\epsilon,z]},S,\bar{0}),
\text{for\ } \overline{\alpha}\in D.
\end{align}

{\bf Example.} Let $z\in 3{\Bbb Z}_3$ be the root of $x^2+x+3=0$.
We take $\alpha=2z+3$.
Then,
\begin{align*}
&h_1^{[\alpha,1,z];(1)}(\alpha)=\dfrac{3}{\alpha}-\dfrac{1}{5}=-\dfrac{2}{5}z,\\
&h_1^{[-\frac{2}{5}z,1,z];(1)}\left(-\dfrac{2}{5}z\right)=\dfrac{3}{-\frac{2}{5}z}-\dfrac{5}{2}=\dfrac{5}{2}z,\\
&h_1^{[\frac{5}{2}z,1,z];(1)}\left(\frac{5}{2}z\right)=\dfrac{3}{\frac{5}{2}z}+\dfrac{2}{5}=-\dfrac{2}{5}z.
\end{align*}
Therefore, $\alpha$ has the  periodic 	$\Phi^{[1,z]}_1$ continued fraction expansion given by
\begin{align*}
\alpha=\cfrac{3}{\dfrac{1}{5}+\cfrac{3}{\dfrac{5}{2}+\cfrac{3}{-\dfrac{2}{5}+\cfrac{3}{\dfrac{5}{2}+\ldots}}}}.
\end{align*}

\subsection{c-map  $\Phi^{[\epsilon,z],(n)}_2$}
We need some definitions to introduce the $c$-map $\Phi^{[\epsilon,z],(n)}_2$ for $n\in {\Bbb Z}_{\geq 1}$.
For $\alpha\in K$, $\alpha$ is  uniquely  written as $\alpha=a_0+a_1z+ \ldots+a_sz^s$ with
$a_i\in {\Bbb Q}$ for $0\leq i \leq s$.
We define $denom_z(\alpha)$ and $denom_z(\overline{\alpha})$ by
\begin{align*}
&denom_z(\alpha):=\min \{|d|\ |d\in {\Bbb Z},d(a_0+a_1x+ \ldots+a_sx^s)\in {\Bbb Z}[x] \},\\
&denom_z(\overline{\alpha}):=\max \{denom_z(\alpha_i)|1\leq i\leq s  \}.
\end{align*}
We define $v^{(1)}_{[\epsilon,z]}:D\to {\Bbb Z}$
\begin{align*}
v^{(1)}_{[\epsilon,z]}(\overline{\alpha}):=\min \{denom_z(H^{[\overline{\alpha},\epsilon,z]}_i(\overline{\alpha}))|1\leq i\leq s  \}.
\end{align*}
We define $v^{(n)}_{[\epsilon,z]}:D\to {\Bbb Z}$ $(n=2,3,\ldots)$ recursively as
\begin{align*}
v^{(n)}_{[\epsilon,z]}(\overline{\alpha}):
=\min \{denom_z(H_i^{[\overline{\alpha},\epsilon,z]}(\overline{\alpha}))v^{(n-1)}_{[\epsilon,z]}(H_i^{[\overline{\alpha},\epsilon,z]}(\overline{\alpha}))|1\leq i\leq s  \}.
\end{align*}
Define $\phi^{(n)}_{[\epsilon,z]}:D\to Ind$ for $n\in {\Bbb Z}_{\geq 1}$  by
\begin{align*}
\phi^{(n)}_{[\epsilon,z]}(\overline{\alpha}):=
\min \{i\in Ind| v^{(n+1)}_{[\epsilon,z]}(\overline{\alpha})=denom_z(H^{[\overline{\alpha},\epsilon,z]}_i(\overline{\alpha}))v^{(n)}_{[\epsilon,z]}(H^{[\overline{\alpha},\epsilon,z]}_i(\overline{\alpha})) \}.
\end{align*}
We give

{\bf Definition of $\Phi^{[\epsilon,z],(n)}_2$}
\begin{align}
\Phi^{[\epsilon,z],(n)}_2(\overline{\alpha}):=(\phi^{(n)}_{[\epsilon,z]}(\overline{\alpha}),H^{[\overline{\alpha},\epsilon,z]}_{\phi^{(n)}_{[\epsilon,z]}(\overline{\alpha})},id,\bar{0}) ,
\end{align}
where $n\in {\Bbb Z}_{\geq 1}$ and $id$ is the identity matrix.

\subsection{c-map  $\Phi^{[z]}_3$}
Finally, we shall define  a $c$-map $\Phi^{[\epsilon,z]}_3$ in a few pages.
For
\begin{align*}
\alpha=\sum_{n\in \Bbb{Z}}c_np^n\in {\Bbb Q}_p\backslash\{0\}\ (c_n\in C),
\end{align*}
We define $\lfloor \alpha:m\rfloor_p$ and $\langle \alpha:m\rangle_p$
as
\begin{align*}
&\lfloor \alpha:m\rfloor_p:=\Sigma_{n\leq m, n\in \Bbb{Z}}c_np^n,\\
&\langle \alpha:m\rangle_p:=\Sigma_{n>m, n\in \Bbb{Z}}c_np^n.
\end{align*}
We denote by $M(n;\Bbb{Q})$  the set of  $n\times n$  matrices
with entries in $\Bbb{Q}$.
We say that
$M=(m_{ij})\in M(n;\Bbb{Q})$ is {\it $p$-reduced} if $M$ satisfies that
for  every integer $i$ with $1\leq i\leq n$ there exist  a unique integer $u(i)$ with $0\leq u(i)\leq n$
 such that   \\
(1)  $m_{ik}=0$ for every integer $k$ with $1\leq k\leq u(i)$,  \\
(2) if $u(i)\ne n$, then $m_{i\ \hspace*{-0.1cm}u(i)+1}\in \{p^l|l\in {\Bbb Z}\}$,
and   $m_{k\ \hspace*{-0.1cm}u(i)+1}=0$ for every integer $k$ with $i<k\leq n$, \\
(3) if $i>1$, $u(i)\geq u(i-1)$,\\
and\\
(4) if $u(i)\ne n$, then for every integer $j$ with $1\leq j<i$
\begin{align*}
\langle m_{j\ \hspace*{-0.1cm}u(i)+1}:ord_p(m_{i\ \hspace*{-0.1cm}u(i)+1})-1\rangle_p=0.
\end{align*}

A matrix in $M(n;\Bbb{Q})$ is converted to a $p$-reduced matrix by using  the following row operations:\\
(a) Switch two rows,\\
(b) Multiply a row by an element of $U_p\cap \Bbb{Q}$,\\
(c) Add a multiple of a row  by an element of $\Bbb{Z}_p\cap \Bbb{Q}$ to another row.

We give an algorithm by which we can find a  $p$-reduced matrix
for any given  $M=(c_{ij})\in M(n;\Bbb{Q})$.
The following program describes such an algorithm.

\newpage

\begin{algorithm}
\caption{$p$-reduction algorithm}
\begin{algorithmic}[1]
   \Require \Statex $M:=(c_{ij})\in M(n;\Bbb{Q})$;
   \Ensure  \Statex $p$-reduced matrix  $M':=(c_{ij})\in M(n;\Bbb{Q})$;
   \For{$i=1,\ldots,n$}
         \State ${\bf b}_i:=(c_{i1},\ldots,c_{in})$;
   \EndFor
   \State $k_1:=1$;
   \State $k_2:=1$;
   \Repeat
         \If{there exists $i$ with $c_{ik_2}\ne 0$ for $k_1\leq i\leq n$}
            \State let $m (k_1\leq m \leq n)$ be the least integer which satisfies
            \State $|c_{mk_2}|_p=\max\{ |c_{ik_2}|_p|k_1\leq i\leq n$, $c_{ik_2}\ne 0\}$;
            \If{$m\ne k_1$}
               \State swap  ${\bf b}_{k_1}$ and  ${\bf b}_m$;
            \EndIf
            \For{$i=k_1+1,\ldots,n$}
　　　　　　　　\State ${\bf b}_i={\bf b}_i-\dfrac{c_{ik_2}}{c_{k_1k_2}}{\bf b}_{k_1}$ ;　　　　　　
            \EndFor
            \State ${\bf b}_{k_1}=\dfrac{1}{|c_{k_1k_2}|_pc_{k_1k_2}}{\bf b}_{k_1}$;
            \For{$i=1,\ldots,k_1-1$}
\State　${\bf b}_i={\bf b}_i-\dfrac{\langle c_{ik_2}:ord_p(c_{k_1k_2})-1\rangle_p}{c_{k_1k_2}}{\bf b}_{k_1}$ ;            \EndFor
   　　　　\State $k_1=k_1+1$;
　　　　　 \State $k_2=k_2+1$;　
         \EndIf
   \Until{$k_2\leq n$;}
\end{algorithmic}
\end{algorithm}

When $M\in M(n;{\Bbb Q})$ is converted by the {\it $p$-reduction} algorithm to $M'\in M(n;{\Bbb Q})$,
there exists $N\in GL(n,{\Bbb Z}_p\cap {\Bbb Q})$ associated with the algorithm
such that $M'=NM$ and we denote $N$ by $pr(M)$.
One can prove the following lemma in the usual way.

\begin{lem}\label{la1}
For $M\in M(n;{\Bbb Q})$ and $N_1, N_2\in GL(n,{\Bbb Z}_p\cap {\Bbb Q})$
$N_1M$ and $N_2M$ are $p$-reduced, then $N_1M=N_2M$ holds.
\end{lem}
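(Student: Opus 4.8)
The plan is to recognize the $p$-reduced form as a Hermite-type normal form for left multiplication by $GL(n,\Bbb{Z}_p\cap\Bbb{Q})$ and then to run the classical uniqueness argument; the relevant structural fact is that $\mathcal{O}:=\Bbb{Z}_p\cap\Bbb{Q}$ is a discrete valuation ring with maximal ideal $p\mathcal{O}$ and valuation $ord_p$. Set $P:=N_1M$, $Q:=N_2M$ and $U:=N_1N_2^{-1}\in GL(n,\mathcal{O})$, so that $P=UQ$. Then the rows of $P$ and the rows of $Q$ generate one and the same $\mathcal{O}$-submodule $\Lambda\subseteq\Bbb{Q}^n$, and it suffices to show that a $p$-reduced matrix is determined by the $\mathcal{O}$-module its rows span.

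First I would translate the definition of $p$-reduced into the familiar echelon picture. From conditions (1) and (3), the zero rows of a $p$-reduced matrix follow all its nonzero rows; from (2) and (3), the pivot columns $c_1<\dots<c_r$ of the nonzero rows strictly increase; the pivots themselves are powers $p^{a_1},\dots,p^{a_r}$ of $p$; and the submatrix on the pivot columns is upper triangular with $p^{a_1},\dots,p^{a_r}$ on the diagonal. In particular the $r$ nonzero rows are $\Bbb{Q}$-linearly independent and span $\Lambda\otimes\Bbb{Q}$, so $r=\dim_{\Bbb{Q}}(\Lambda\otimes\Bbb{Q})$, the $c_i$ are exactly the pivot columns of the $\Bbb{Q}$-row space of $\Lambda$, and $p^{a_i}\mathcal{O}$ is the image under projection to coordinate $c_i$ of $\{v\in\Lambda:v_{c_1}=\dots=v_{c_{i-1}}=0\}$; all of these data are intrinsic to $\Lambda$ and hence agree for $P$ and $Q$. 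Writing $p_1,\dots,p_r$ and $q_1,\dots,q_r$ for the respective nonzero rows, each family is an $\mathcal{O}$-basis of the free module $\Lambda$, so $q_i=\sum_j b_{ij}p_j$ with $B=(b_{ij})\in GL(r,\mathcal{O})$; comparing the pivot-column submatrices of $P$ and $Q$ (both upper triangular, same diagonal) shows $B$ is upper triangular with $1$'s on the diagonal.

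The crux — the one point at which reducedness condition (4) is used — is to upgrade this to $B=\mathrm{Id}$. I would prove $b_{im}=0$ for all $i<m$ by induction on $m-i$: from $q_i=\sum_j b_{ij}p_j$ and the shape of $B$, the entry in pivot column $c_m$ satisfies
\[
(q_i)_{c_m}-(p_i)_{c_m}=b_{im}\,p^{a_m}+\sum_{i<j<m}b_{ij}\,(p_j)_{c_m},
\]
and the sum vanishes by the induction hypothesis, so $(q_i)_{c_m}-(p_i)_{c_m}\in p^{a_m}\Bbb{Z}_p$. By (4) both $(p_i)_{c_m}$ and $(q_i)_{c_m}$ belong to $S:=\{x\in\Bbb{Q}:\langle x:a_m-1\rangle_p=0\}\cup\{0\}$; since $p^{-a_m}S$ is the set of rationals with purely negative $p$-adic expansion (together with $0$), a set meeting each coset of $\Bbb{Z}_p$ in at most one point, the congruence $(p_i)_{c_m}\equiv(q_i)_{c_m}\pmod{p^{a_m}\Bbb{Z}_p}$ forces $(p_i)_{c_m}=(q_i)_{c_m}$ and $b_{im}=0$. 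Hence $B=\mathrm{Id}$, i.e. $q_i=p_i$ for every $i$; together with the fact that the remaining rows of $P$ and $Q$ vanish, this yields $P=Q$. I expect the routine-but-fiddly work to be extracting the echelon picture from the definition, and the one genuine point to be the transversal property of $S$.
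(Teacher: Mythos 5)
Your proof is correct. The paper gives no argument at all here --- it merely asserts that the lemma follows ``in the usual way,'' meaning the standard uniqueness argument for Hermite-type echelon forms over the discrete valuation ring $\Bbb{Z}_p\cap\Bbb{Q}$ --- and your write-up is a complete and accurate instantiation of exactly that argument, including the one point that genuinely needs checking, namely that condition (4) makes the entries above a pivot $p^{a}$ a transversal for $p^{a}\Bbb{Z}_p$.
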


We give an example.\\

\noindent
Example.
Let $p=2$.
$\displaystyle M=\begin{pmatrix}10&3/2\\-5&7\end{pmatrix}$
is converted by the $p$-reduction algorithm to
$\displaystyle M'=\begin{pmatrix}1&0\\0&1/2\end{pmatrix}$ and
$pr(M):=\displaystyle \begin{pmatrix}14/155&-3/155\\1/31&2/31\end{pmatrix}$.
\\

We recall that  $z\in K$ satisfies  Condition {\bf H} and $K={\Bbb Q}(z)$.
For $\overline{\alpha}=(\alpha_1,\ldots, \alpha_s)\in D$
the $M_{\overline{\alpha}}=(m_{ij})\in  M(s\times (s+1);\Bbb{Q})$ is defined by
$\overline{\alpha}=M_{\overline{\alpha}}(z^s,\ldots,z,1)$ and
$M'_{\overline{\alpha}}\in M_{s}(\Bbb{Q})$ is given by
$M'_{\overline{\alpha}}:=(m_{ij})_{1\leq i\leq s,1\leq j\leq s}$.
We define a map $\tau_z:D\to M_{s}({\Bbb Z}_p\cap \Bbb{Q})$ as
\begin{align*}
\tau_z(\overline{\alpha}):=pr(M'_{\overline{\alpha}}).
\end{align*}
We define $\gamma' (\overline{\alpha})\in (p\Bbb{Z}_p\cap Q)^s$ by
\begin{align*}
\gamma' (\overline{\alpha})
:=(-\langle l_{1s+1}\rangle_p,\ldots,-\langle l_{ss+1}\rangle_p),
\end{align*}
where $(l_{ij})_{1\leq i\leq s,1\leq j\leq s+1}=pr(M'_{\overline{\alpha}})M_{\overline{\alpha}}$.
We define a $c$-map $\Phi^{[z]}_3$ by

{\bf Definition of $\Phi^{[z]}_3$}
\begin{align}
\Phi^{[z]}_3(\overline{\alpha}):=(s,G^{[\overline{\alpha},\epsilon,z]}_{s},\tau_z(G^{[\overline{\alpha},1,z]}_{s}(\overline{\alpha})),\gamma' (\overline{\alpha})),  \text{ for\ }
\overline{\alpha}\in D.
\end{align}

\section{Rational and Quadratic Cases}
In this section, we consider the periodicity of the expansion obtained by
our algorithms in  specific cases.
It is well-known that
every rational number has a finite regular continued fraction expansion.
We will see that a similar result holds for the $\Phi_0^{[-1]}$ continued fraction  algorithm.

\begin{prop}\label{p2}
Let $K$ be  $\Bbb{Q}$.
Then,
every rational number $\alpha$ has a finite  $\Phi_0^{[-1]}$ continued fraction expansion.
\end{prop}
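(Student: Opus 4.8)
The plan is to unwind the definitions in the case $K=\Bbb{Q}$, where $s=1$, $D=\Bbb{Q}$, $S$ is the identity and $\gamma=\overline{0}$, so that $\Phi_0^{[-1]}$ is simply the single transformation $T(\alpha)=-p^{ord_p(\alpha)}/\alpha-\omega_p(-p^{ord_p(\alpha)}/\alpha)$, and the task is to show $\alpha^{(m)}=0$ for some $m$, by an infinite-descent argument on a suitable ``height''. If $\alpha=0$ there is nothing to prove. Otherwise $-p^{ord_p(\alpha)}/\alpha$ is a $p$-adic unit, hence $T(\alpha)\in p\Bbb{Z}_p$; so after one step I may assume $\alpha^{(n)}\in p\Bbb{Z}_p\cap\Bbb{Q}$ for all $n\ge 1$, and I work only with such $n$.

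For $\alpha^{(n)}\ne 0$ write $\alpha^{(n)}=p^{v_n}a_n/b_n$ in lowest terms with $v_n=ord_p(\alpha^{(n)})\ge 1$, $b_n>0$, $p\nmid a_nb_n$. Then $-p^{v_n}/\alpha^{(n)}=-b_n/a_n$, the digit $c_0^{(n)}:=\omega_p(-b_n/a_n)\in\{1,\ldots,p-1\}$ satisfies $c_0^{(n)}a_n\equiv -b_n\pmod p$, and $\alpha^{(n+1)}=(-b_n-c_0^{(n)}a_n)/a_n$. Since $\gcd(-b_n-c_0^{(n)}a_n,a_n)=\gcd(b_n,a_n)=1$ and $p\nmid a_n$, extracting $p^{v_{n+1}}=ord_p(-b_n-c_0^{(n)}a_n)\ge 1$ yields the recursion $b_{n+1}=|a_n|$ and $|a_{n+1}|\,p^{v_{n+1}}=|b_n+c_0^{(n)}a_n|$. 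I would then record three facts, each immediate from $p^{v_{n+1}}\ge p$ and $1\le c_0^{(n)}\le p-1$: (i) $\alpha^{(n+1)}=0$ exactly when $b_n+c_0^{(n)}a_n=0$, i.e.\ $|a_n|=1$ and $c_0^{(n)}=b_n$; (ii) if $a_n>0$ then $a_{n+1}<0$ (and $\alpha^{(n+1)}\ne 0$); (iii) if $a_n<0$ and $\alpha^{(n+1)}\ne 0$, then $|a_{n+1}|<|a_n|$ when $a_{n+1}<0$, and $|a_{n+1}|<b_n/p$ when $a_{n+1}>0$.

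Next I would introduce the height $R_n:=\max(|a_n|,|a_{n-1}|)=\max(|a_n|,b_n)$ for $n\ge 2$ and check it is non-increasing: the crude bound $|a_{n+1}|\le(b_n+(p-1)|a_n|)/p\le R_n$ together with $b_{n+1}=|a_n|\le R_n$ gives $R_{n+1}\le R_n$. Being a non-increasing sequence of positive integers (as long as the expansion continues), $R_n$ is eventually constant, say $R_n=R\ge 1$ for $n\ge N\ge 2$; assume for contradiction the expansion is infinite. First I show $|a_n|=R$ for all $n\ge N$: if $|a_m|<R$ for some $m\ge N$, then $R_m=R$ forces $|a_{m-1}|=R$ and $R_{m+1}=R$ forces $|a_{m+1}|=R$; but if $a_m>0$ then $|a_{m+1}|\,p^{v_{m+1}}=b_m+c_0^{(m)}a_m=R+c_0^{(m)}|a_m|$, so $Rp^{v_{m+1}}=R+c_0^{(m)}|a_m|<R+(p-1)R=pR$, forcing $p^{v_{m+1}}<p$, impossible; and if $a_m<0$ then fact (iii) gives $|a_{m+1}|<\max(|a_m|,b_m/p)\le R$, also impossible. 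Finally, by (ii) not all of $a_N,a_{N+1},\ldots$ are positive, so there are infinitely many $n\ge N+1$ with $a_n<0$; fix one such $n$. Then $b_n=|a_{n-1}|=R=|a_n|$, so $b_n+c_0^{(n)}a_n=R-c_0^{(n)}R=R(1-c_0^{(n)})$. If $c_0^{(n)}=1$ this is $0$, so $\alpha^{(n+1)}=0$ by (i), contradicting infiniteness; if $c_0^{(n)}\ge 2$ then $a_{n+1}<0$ and (iii) gives $|a_{n+1}|<|a_n|=R$, contradicting $|a_{n+1}|=R$. Hence the $\Phi_0^{[-1]}$ continued fraction expansion of $\alpha$ is finite.

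The main obstacle is that $T$ is not height-contracting at every step: the elementary estimate only gives $R_{n+1}\le R_n$, and genuine plateaus do occur (as the numerical examples show), so the heart of the proof is the combinatorial sign analysis ruling out an infinite plateau. This exploits crucially that $v_{n+1}\ge 1$ — dividing out $p$-powers costs a factor $\le 1/p$ — and that the partial quotient $c_0^{(n)}$ is forced, so that $c_0^{(n)}=1$ on a plateau immediately produces $\alpha^{(n+1)}=0$. A minor point to handle with care is the very first step and small indices (for instance $b_1$ need not equal $|a_0|$), which is why the height argument is run only for $n\ge 2$.
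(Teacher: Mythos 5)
Your proof is correct. It is the same general strategy as the paper's --- an infinite descent on a height of the rational remainders, with the sign behaviour of the remainders used to rule out stagnation --- but the implementation differs enough to be worth comparing. The paper takes the height of $\alpha^{(n)}=m_1p^k/m_2$ to be the \emph{sum} $|m_1p^k|+|m_2|$ of numerator and denominator (with the $p$-power kept in the numerator); the chain $|m_1p^k|+|m_2|\ge p|m_1|+|m_2|\ge|{-m_2}-jm_1|+|m_1|$ then shows in one line that the height is non-increasing, and that equality forces $\alpha^{(n)}>0$ (together with $j=p-1$, $k=1$); since a positive remainder is always followed by a negative one, the height strictly drops at least every second step, and the descent terminates. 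You instead use the \emph{max} height $R_n=\max(|a_n|,b_n)$ with the $p$-power stripped out, which only gives $R_{n+1}\le R_n$, and you must then do the genuinely harder work of excluding an infinite plateau via the sign and digit case analysis (facts (i)--(iii), the claim $|a_n|=R$ on the plateau, and the $c_0^{(n)}=1$ versus $c_0^{(n)}\ge 2$ dichotomy). All of those steps check out --- in fact on a plateau with $a_n<0$ one has $|a_n|=b_n=R$ and coprimality forces $R=1$, which collapses your final case analysis, though you do not need to observe this. What the paper's choice of height buys is exactly the avoidance of your plateau analysis: the triangle-inequality slack in $|{-m_2}-jm_1|\le|m_2|+j|m_1|$ encodes the sign information directly, so the tie-breaking reduces to the single observation that equality requires $m_1m_2>0$.
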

\begin{proof}
We define the height of rational number by the summation of the absolute value of its numerator and  the absolute that of its
denominator, which is denoted by $height()$.
Let $\alpha$ be a rational number.
By Lemma \ref{l2.5} we assume $\alpha\in p{\Bbb Z}_p$.
If $\alpha=0$, then $\alpha$ has obviously a finite  $\Phi_0^{[-1]}$ continued fraction expansion.
Let $\alpha=\frac{m_1p^k}{m_2}\ne 0$ where $m_1,m_2$ are relatively prime integers with $ord_p(m_1)=ord_p(m_2)=0$ and
 $k\in {\Bbb Z}_{\geq 1}$.
Then, we see that the next remainder  $\alpha_1$ is $\frac{-m_2-jm_1}{m_1}$ where $j=\omega_p(-\frac{m_2}{m_1})$.
We have
\begin{align*}
&height(\alpha)=|m_1p^k|+|m_2|\geq p|m_1|+|m_2|
\geq |-m_2-jm_1|+|m_1|\\
&=height(\alpha_1),
\end{align*}
in which the equality holds for $\alpha>0$, $j=p-1$ and $k=1$.
If the sign of $\alpha$ is positive, the sign of the next remainder is minus.
Therefore,
if $\alpha$ has an infinite  $\Phi_0^{[-1]}$ continued fraction expansion, we see
 $height(\alpha)>height(\alpha_2)>height(\alpha_4)>\ldots $, which is a contradiction.
\end{proof}

For the $\Phi_0^{[1]}$ continued fraction  algorithm on $\Bbb{Q}$, which coincides
with Schneider's continued fraction  algorithm, Bundschuh \cite{B} showed that
every rational number has an infinite periodic expansion or a finite expansion.

Next, we consider  quadratic cases. Weger \cite{W} showed that
some quadratic elements have  non periodic Schneider's continued fraction expansions.
Some numerical experiments given in Section 8 say that
some quadratic elements possibly have non periodic  expansion
by the $\Phi_0^{[-1]}$ continued fraction algorithm.

For the $\Phi_1^{[\epsilon,z]}$ continued fraction, we give following Lemma.

\begin{lem}\label{l7}
Let $K$ be a quadratic field over $\Bbb{Q}$.
Let $z\in K$ satisfy Condition {\bf H} and $K={\Bbb Q}(z)$.
For $q\in {\Bbb Q}$ with $ord_p(qz)>0$, $qz$ has a periodic $\Phi_1^{[\epsilon,z]}$ continued fraction expansion.
\end{lem}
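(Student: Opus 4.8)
The plan is to show directly that the sequence of remainders $\{\overline{\alpha}^{(n)}\}$ of $qz$ under $\Phi^{[\epsilon,z]}_1$ takes only finitely many values; periodicity then follows immediately from the definition by the pigeonhole principle. If $q=0$ then $\overline{\alpha}^{(0)}=0$ and, by the $c$-map axioms ($\gamma(\bar 0)=\bar 0$ and the identity branch at $\alpha_j=0$), $\overline{\alpha}^{(n)}=0$ for all $n$, which is periodic. So assume $q\neq 0$; then $\overline{\alpha}^{(0)}=qz\in E$ because $ord_p(qz)>0$, and here $s=1$, $D=K$, and $\phi\equiv 1$. Write the minimal polynomial of $z$ over $\Bbb{Q}$ as $x^2+bx+c$; condition (H) gives $ord_p(b)=0$ and $\rho:=ord_p(c)>0$, and since $z\in p\Bbb{Z}_p$ while the two roots of $x^2+bx+c$ have valuations $0$ and $\rho$, we get $ord_p(z)=\rho$ and $ord_p(\sigma(z))=0$, where $\sigma$ is the nontrivial automorphism of $K$.

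The first step is to show $\overline{\alpha}^{(1)}$ has the special form $\overline{\alpha}^{(1)}=q_1z$ with $q_1\in\Bbb{Q}$, $ord_p(q_1)=0$. Using $z^{-1}=-(z+b)/c$, one computes $g_1^{[\overline{\alpha}^{(0)},\epsilon];(1)}(\overline{\alpha}^{(0)})=a_0+a_1z$ with $a_1=-\epsilon p^{ord_p(qz)}/(qc)$, and since $ord_p(qc)=ord_p(q)+\rho=ord_p(qz)$ the powers of $p$ cancel, so $ord_p(a_1)=0$. Hence the normalizer $a'$ equals the numerator of $a_1$ and $a_1/a'=\pm1/D_1$ is the reciprocal of a positive integer $D_1$ prime to $p$. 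Moreover $g_1^{[\overline{\alpha}^{(0)},\epsilon];(1)}(\overline{\alpha}^{(0)})\in p\Bbb{Z}_p$ by the $c$-map axioms, and $ord_p(a_1z)=\rho\ge1$, so $ord_p(a_0)\ge1$, whence $ord_p(a_0/a')\ge1$ and therefore $\langle a_0/a'\rangle_p=a_0/a'$. Consequently $\overline{\alpha}^{(1)}=h_1^{[\overline{\alpha}^{(0)},\epsilon,z];(1)}(\overline{\alpha}^{(0)})=\frac{a_0+a_1z}{a'}-\frac{a_0}{a'}=\frac{a_1}{a'}z=\pm z/D_1$. The same computation, now with $q$ replaced by any $q_n$ with $ord_p(q_n)=0$ (so that $ord_p(\overline{\alpha}^{(n)})=\rho$), shows inductively that $\overline{\alpha}^{(n)}=\pm z/D_n$ with $D_n$ a positive integer prime to $p$ for all $n\ge1$: the coefficient of $z$ always has $ord_p=0$ (so the $a'$-normalization produces a unit reciprocal) and the coefficient of $1$ always has $ord_p\ge1$ (so the $\langle\cdot\rangle_p$-subtraction exactly cancels it). Finally, writing $c=p^{\rho}c'$ with $c'$ a $p$-adic unit in lowest terms $c'=\pm r/s$ ($\gcd(r,s)=1$, $p\nmid rs$), the reduction of $a_1^{(n+1)}=-\epsilon/(q_nc')=\pm\epsilon D_ns/r$ yields $D_{n+1}=r/\gcd(D_ns,r)$; in particular $D_{n+1}\mid r$, an integer depending only on $z$. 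Hence $\{D_n:n\ge2\}$ lies in the finite set of positive divisors of $r$, so $\{\overline{\alpha}^{(n)}:n\ge2\}$ is finite, and $\overline{\alpha}^{(m_1)}=\overline{\alpha}^{(m_2)}$ for some $m_1\neq m_2$, i.e. the $\Phi^{[\epsilon,z]}_1$ expansion of $qz$ is periodic.

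The only genuinely delicate point is the verification, at each step, that the coefficient of $z$ in $g_1^{[\cdot]}(\cdot)$ has $ord_p$ exactly $0$ and the coefficient of $1$ has $ord_p\ge1$: this is precisely where condition (H) enters, through the identity $ord_p(z)=ord_p(c)$ which forces the $p$-powers in $a_1$ to cancel and sends the constant term into $p\Bbb{Z}_p$. This regularization of the first remainder to the form "($p$-unit)$\cdot z$" is the analogue here of \lemref{l2}. Everything else — the reduced-fraction bookkeeping leading to $D_{n+1}=r/\gcd(D_ns,r)$, and the fact that $\overline{\alpha}^{(n)}=\pm z/D_n\neq0$ so the expansion is genuinely recurrent rather than finite — is routine.
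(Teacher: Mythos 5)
Your proof is correct and follows essentially the same route as the paper's: use $z^{-1}=-(z+b)/c$ and condition (H) to show the first remainder normalizes to $\pm z/D_1$ with $D_1$ a $p$-free positive integer, then track the denominators via $D_{n+1}=r/\gcd(D_ns,r)$ (which, since $\gcd(r,s)=1$, is the paper's $v_1/\gcd(D_n,v_1)$) to conclude all remainders from the second on lie in a finite set, and apply pigeonhole. The only difference is organizational: you treat the three cases $ord_p(q)=0$, $>0$, $<0$ uniformly by carrying the factor $p^{ord_p(qz)}$, where the paper writes them out separately.
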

\begin{proof}
Let $x^2+ux+vp^k$ be a minimal polynomial of $z$, where $u,v\in {\Bbb Q}$, $ord_p(u)=ord_p(v)=0$ and $k>0$.
Let $v=\dfrac{v_1}{v_2}$ and $q=\dfrac{m_1}{m_2}$, where $v_1, v_2$ are relatively prime integers with $v_2>0$
and $m_1, m_2$ are relatively prime integers with $m_2>0$.
First, we suppose that
$ord_p(q)=0$.
Since $ord_p(z)=k$, we have
\begin{align*}
G^{[qz,\epsilon]}(qz)=\epsilon\dfrac{m_2p^k}{m_1z}-j=-\epsilon \dfrac{m_2v_2z}{m_1v_1}-\epsilon\dfrac{m_2uv_2}{m_1v_1}-j,
\end{align*}
where $j=\omega_p(\epsilon\frac{m_2p^k}{m_1z})$.
Then, we have
\begin{align*}
H^{[qz,\epsilon,z]}(qz)=-\epsilon \dfrac{m_2v_2z}{m_1v_1}+\left\lfloor-\epsilon\dfrac{m_2uv_2}{m_1v_1}-j\right\rfloor_p.
\end{align*}
Since $H^{[qz,\epsilon,z]}(qz), \ -\epsilon \dfrac{m_2zv_2}{m_1v_1}\in p{\Bbb Z}_p$, so that
$\left\lfloor-\epsilon\dfrac{m_2uv_2}{m_1v_1}-j\right\rfloor_p=0$.
Then, we have
\begin{align*}
H^{[qz,\epsilon,z]}(qz)=-\epsilon \dfrac{m_2v_2z}{m_1v_1}.
\end{align*}
Therefore, the next remainder is
\begin{align*}
-\epsilon \dfrac{-\epsilon m_1v_1z}{m_2v_2}\dfrac{v_2}{v_1}=qz.
\end{align*}
Hence, $qz$ has a periodic $\Phi_1^{[\epsilon,z]}$ continued fraction expansion.
Next, we suppose that
$ord_p(q)=g>0$.
Let $m_1=m_1'p^g$, where $m_1'\in {\Bbb Z}$.
Then, we have
\begin{align*}
G^{[qz,\epsilon]}(qz)=\epsilon\dfrac{m_2p^{k+g}}{m_1z}-j=-\epsilon \dfrac{m_2v_2z}{m_1'v_1}-\epsilon\dfrac{m_2uv_2}{m_1'v_1}-j,
\end{align*}
where $j=\omega(\epsilon\frac{m_2p^{k+g}}{m_1z})$.
Similarly, we have
\begin{align*}
H^{[qz,\epsilon,z]}(qz)=-\epsilon \dfrac{m_2v_2z}{m_1'v_1}.
\end{align*}
Since $ord_p(-\epsilon \dfrac{m_2v_2}{m_1'v_1})=0$ holds,
by  the previous argument $H^{[qz,\epsilon,z]}(qz)$ has a periodic $\Phi_1^{[\epsilon,z]}$ continued fraction expansion.

Finally, we suppose that
$ord_p(q)=-g<0$.
Let $m_2=m_2'p^g$, where $m_2'\in {\Bbb Z}$.
We suppose $k-g\geq 0$.
Then, we have
\begin{align*}
G^{[qz,\epsilon]}(qz)=\epsilon\dfrac{m_2p^{k-g}}{m_1z}-j=-\epsilon \dfrac{m_2'v_2z}{m_1v_1}-\epsilon\dfrac{m_2'uv_2}{m_1v_1}-j,
\end{align*}
where $j=\omega_p(\epsilon\frac{m_2p^{k-g}}{m_1z})$.
Similarly, we have
\begin{align*}
H^{[qz,\epsilon,z]}(qz)=-\epsilon \dfrac{m_2'v_2z}{m_1v_1}.
\end{align*}
Since $ord_p(-\epsilon \dfrac{m_2'v_2}{m_1v_1})=0$ holds,
$H^{[qz,\epsilon,z]}(qz)$ has a periodic $\Phi_1^{[\epsilon,z]}$ continued fraction expansion.
In the case of $k-g<0$, we have
\begin{align*}
H^{[qz,\epsilon,z]}(qz)=-\epsilon \dfrac{m_2'v_2p^{g-k}z}{m_1v_1}.
\end{align*}
Since $ord_p(-\epsilon \dfrac{m_2'v_2p^{g-k}}{m_1v_1})>0$ holds,
$H^{[qz,\epsilon,z]}(qz)$ has a periodic $\Phi_1^{[\epsilon,z]}$ continued fraction expansion.

\end{proof}

\begin{thm}\label{t2}
Let $K$ be a quadratic field over $\Bbb{Q}$.
Let $z\in K$ satisfy  Condition {\bf H} and $K={\Bbb Q}(z)$.
Then, every rational number $\alpha$ has a finite  $\Phi_1^{[\epsilon,z]}$ continued fraction expansion.
Every $\alpha\in K$ with $\alpha\notin \Bbb{Q}$
 has a periodic $\Phi_1^{[\epsilon,z]}$ continued fraction expansion.
\end{thm}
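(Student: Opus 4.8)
The plan is to handle the two assertions separately; the rational case is immediate from the definition of $\Phi_1^{[\epsilon,z]}$, while the quadratic case will be reduced, after finitely many steps of the algorithm, to Lemma~\ref{l7}.

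For $\alpha\in\Bbb{Q}$, $\alpha\neq0$, the element $g_1^{[\alpha,\epsilon];(1)}(\alpha)=\dfrac{\epsilon p^{ord_p(\alpha)}}{\alpha}-\omega_p\!\left(\dfrac{\epsilon p^{ord_p(\alpha)}}{\alpha}\right)$ is a rational number lying in $p\Bbb{Z}_p$; written in the basis $\{1,z\}$ its $z$-coefficient is $0$, so $a'=1$, and its constant term is the number itself, whence $h_1^{[\alpha,\epsilon,z];(1)}(\alpha)=g_1^{[\alpha,\epsilon];(1)}(\alpha)-\langle g_1^{[\alpha,\epsilon];(1)}(\alpha)\rangle_p=\lfloor g_1^{[\alpha,\epsilon];(1)}(\alpha)\rfloor_p=0$. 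Thus $\alpha^{(1)}=0$ and $\alpha$ has a finite $\Phi_1^{[\epsilon,z]}$ continued fraction expansion (the case $\alpha=0$ being trivial).

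Now let $\alpha\in K\setminus\Bbb{Q}$ and let $\sigma$ be the nontrivial element of $\mathrm{Gal}(K/\Bbb{Q})$. By Lemma~\ref{l2.5}, $\alpha^{(n)}\in E$ for all $n\ge1$, and by Lemma~\ref{l3} (with $s=1$) each $\alpha^{(n)}$ still lies in $K\setminus\Bbb{Q}$. Inspecting the definition of $\Phi_1^{[\epsilon,z]}$, one finds $\alpha^{(n+1)}=\dfrac{\epsilon p^{j_n}}{a'_n\alpha^{(n)}}-d_n$, where $j_n=ord_p(\alpha^{(n)})\ge1$, $a'_n\in\Bbb{Z}_{>0}$ is coprime to $p$, and $d_n\in\Bbb{Q}$ satisfies $ord_p(d_n)=0$; since this transformation has rational coefficients, applying $\sigma$ gives $\sigma(\alpha^{(n+1)})=\dfrac{\epsilon p^{j_n}}{a'_n\sigma(\alpha^{(n)})}-d_n$. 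From this relation, $ord_p(\sigma(\alpha^{(n)}))\le0$ for some $n$ forces $ord_p(\sigma(\alpha^{(m)}))=0$ for all $m>n$; and, exactly as in the proof of Lemma~\ref{l2} (using that the inverse transformations contract $p$-adic distances by the factor $p^{-j_n}$, Lemma~\ref{l5}, and that $\sum_n j_n=\infty$), the alternative $ord_p(\sigma(\alpha^{(n)}))>0$ for all $n$ would force $\alpha^{(1)}=\sigma(\alpha^{(1)})$, contradicting Lemma~\ref{l3}. Hence there is an $N$ with $ord_p(\sigma(\alpha^{(N)}))=0$, so that $\alpha^{(N)}$ satisfies condition~(H); this reduction to condition~(H) is the step requiring the most care.

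It remains to observe that $\alpha^{(N+1)}$ is a rational multiple of $z$, for then Lemma~\ref{l7} applies. Write $\alpha^{(N)}=a_0+a_1z$ with $a_0,a_1\in\Bbb{Q}$. Since $\alpha^{(N)}-\sigma(\alpha^{(N)})=a_1(z-\sigma(z))$ with $(z-\sigma(z))^2=u^2-4vp^k$ a $p$-adic unit (where $x^2+ux+vp^k$ is the minimal polynomial of $z$, so $ord_p(u)=0$ and $k\ge1$), while $ord_p(\alpha^{(N)}-\sigma(\alpha^{(N)}))=\min(j_N,0)=0$, we obtain $ord_p(a_1)=0$. Consequently the $z$-coefficient of $g_1^{[\alpha^{(N)},\epsilon];(1)}(\alpha^{(N)})$ — which equals $-\epsilon p^{j_N}a_1/\bigl(\alpha^{(N)}\sigma(\alpha^{(N)})\bigr)$ — is again a $p$-adic unit, so after dividing by $a'_N$ in forming $h_1^{[\alpha^{(N)},\epsilon,z];(1)}$ its numerator becomes $\pm1$; hence the $z$-coefficient of $\alpha^{(N+1)}$ is $\pm1/D$ for some integer $D$ coprime to $p$. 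The constant term of $\alpha^{(N+1)}$ is $\lfloor c/a'_N\rfloor_p$, where $c$ is the constant term of $g_1^{[\alpha^{(N)},\epsilon];(1)}(\alpha^{(N)})$; but $g_1^{[\alpha^{(N)},\epsilon];(1)}(\alpha^{(N)})\in p\Bbb{Z}_p$ forces $c/a'_N\equiv\mp z/D\pmod{p\Bbb{Z}_p}$, and $\mp z/D$ has $ord_p=k\ge1$, so $\lfloor c/a'_N\rfloor_p=0$. Therefore $\alpha^{(N+1)}=\pm z/D$ with $ord_p(\pm z/D)=k>0$; by Lemma~\ref{l7} this element, and hence $\alpha$, has a periodic $\Phi_1^{[\epsilon,z]}$ continued fraction expansion, as claimed. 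The main obstacle is thus the first half of the quadratic case — the reduction to condition~(H) — while the second half is the short computation above reducing to Lemma~\ref{l7}.
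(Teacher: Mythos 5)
Your proposal is correct, and the rational case is handled exactly as in the paper (one application of $g_1$ lands in $p\Bbb{Z}_p\cap\Bbb{Q}$, so $h_1$ gives $0$). For the quadratic--irrational case, however, you take a genuinely different route. The paper shows by direct computation that within at most two iterations the remainder becomes a rational multiple of $z$: it first observes that $\alpha^{(1)}$ has the form $qz$ or $\tfrac{z}{m}+q'$ with $|q'|_p\geq 1$, and in the second case it explicitly inverts $\tfrac{z}{m}+q'$ and estimates the norm-form quantity $(m'p^kq')^2-um'p^kq'+p^kv$ to show that $\alpha^{(2)}$ is again a rational multiple of $z$; Lemma~\ref{l7} then finishes. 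You instead first run a Galois-conjugate contraction argument in the spirit of Lemma~\ref{l2} (using that each $T_n^{-1}$ contracts by $p^{-j_n}$ on $p\Bbb{Z}_p$, that $\sum j_n=\infty$, and that $\alpha^{(1)}\notin\Bbb{Q}$ by Lemma~\ref{l3}) to conclude that some remainder $\alpha^{(N)}$ satisfies condition (H), i.e.\ $ord_p(\sigma(\alpha^{(N)}))=0$; only then do you perform a single explicit step to get $\alpha^{(N+1)}=\pm z/D$ and invoke Lemma~\ref{l7}. Both arguments are sound. What the paper's computation buys is explicitness -- the orbit reaches the finite set $\{\delta z/m\}$ within two steps, which is exactly what is reused for Corollary~\ref{c1} on purely periodic expansions -- whereas your softer argument avoids the norm computation entirely and makes transparent \emph{why} the reduction works (it is the same mechanism as Lemma~\ref{l2}), at the cost of an a priori larger, though still effectively bounded, number of preliminary steps.
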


\begin{proof}
Let $x^2+ux+vp^k$ be the minimal polynomial of $z$, where $u,v\in {\Bbb Q}$, $ord_p(u)=ord_p(v)=0$ and $k>0$.
Let $v=\dfrac{v_1}{v_2}$, where $v_1, v_2$ are relatively prime integers.
Let $\alpha\ne 0$ be a rational number.
By Lemma \ref{l2.5} we may assume $\alpha\in p{\Bbb Z}_p$.
Let $\alpha=\frac{m_1p^{k_1}}{m_2}\ne 0$ such that $m_1,m_2$ are relatively prime integers with $ord_p(m_1)=ord_p(m_2)=0$ and
 $k_1\in {\Bbb Z}_{\geq 1}$.
Then, we see that $G^{[\overline{\alpha},\epsilon]}(\alpha)=\frac{-\epsilon m_2-jm_1}{m_1}$, where
$j=\omega_p(-\frac{\epsilon m_2}{m_1})$.
Since $\frac{-\epsilon m_2-jm_1}{m_1}\in p{\Bbb Z}_p\cap {\Bbb Q}$, $H^{[\overline{\alpha},\epsilon,z]}(\alpha)=0$.
Therefore, $\alpha$ has a finite  $\Phi_1^{[\epsilon,z]}$ continued fraction expansion.

Let $\alpha\in K$ be not rational.
Then,
 we have
\begin{align*}
H^{[\alpha,\epsilon,z]}(\alpha)=qz\ \text{or }H^{[\alpha,\epsilon,z]}(\alpha)=\dfrac{z}{m}+q',
\end{align*}
where $q,m,q'\in {\Bbb Q}$ with $|m|_p<1$ and $|q'|_p\geq 1$.
If $H^{[\overline{\alpha},\epsilon,z]}(\alpha)=qz$ holds, by Lemma \ref{l7},
we see that
$\alpha$ has a periodic  $\Phi_1^{[\epsilon,z]}$ continued fraction expansion.
We suppose that  $H^{[\alpha,\epsilon,z]}(\alpha)=\dfrac{z}{m}+q'$.
Let $k_2:=ord_p( \dfrac{z}{m}+q')>0$.
First, we suppose $ord_p(q')=0$.
Then, since  $ord_p(\dfrac{z}{m})=0$, we have  $ord_p(m)=k$.
Let $m=p^km'$, where $m'\in {\Bbb Q}$ and $ord_p(m')=0$.
Then, we get
\begin{align*}
 \dfrac{\epsilon}{\dfrac{z}{m}+q'}=
\dfrac{\epsilon m'p^k}{(m'p^kq')^2-um'p^kq'+p^kv}
(-z+m'p^kq'-u).
\end{align*}
We consider the coefficient of the term $z$ in the right hand side
of the above formula.
We have
\begin{align*}
&|(m'p^kq')^2-um'p^kq'+p^kv|_p=|(m'p^kq')^2-u(m'p^kq'+z)+uz+p^kv|_p\nonumber\\
&=|(m'p^kq')^2-u(m'p^kq'+z)-z^2|_p\nonumber\\
&=\left|(m'p^kq')^2-z^2-m'p^ku\left(q'+\dfrac{z}{m'p^k}\right)\right|_p\\
&=\left|(m'p^k)^2\left(q'+\dfrac{z}{m'p^k}\right)\left(q'-\dfrac{z}{m'p^k}\right)
-m'p^ku\left(q'+\dfrac{z}{m'p^k}\right)\right|_p\\
&=\left|m'p^ku\left(q'+\dfrac{z}{m'p^k}\right)\right|_p=p^{-(k+k_2)}.
\end{align*}
Therefore, we have
\begin{align}\label{eq1}
\left|\dfrac{-\epsilon m'p^{k+k_2}z}{(m'p^kq')^2-um'p^kq'+p^kv}\right|_p<1.
\end{align}
By (\ref{eq1}) and  $\displaystyle\left|\frac{\epsilon p^{k_2}}{\frac{z}{m}+q'}-\omega_p\left( \frac{\epsilon p^{k_2}}{\frac{z}{m}+q'}\right)\right|_p<1$,
we have
\begin{align*}
\left|\dfrac{\epsilon m'p^{k+k_2}}{(m'p^kq')^2-um'p^kq'+p^kv}
(m'p^kj-u)-\omega_p\left( \dfrac{\epsilon p^{k_2}}{\dfrac{z}{m}+q'}\right)\right|_p<1,
\end{align*}
which yields
\begin{align*}
\displaystyle H^{[\frac{z}{m}+q',\epsilon,z]}\left(\frac{z}{m}+q'\right)=
\dfrac{-\epsilon m'p^{k+k_2}}{(m'p^kq')^2-um'p^kq'+p^kv}z.
\end{align*}
By Lemma \ref{l7}, $\alpha$ has a periodic $\Phi_1^{[\epsilon,z]}$ continued fraction expansion.
Next, we suppose $ord_p(q')=-l<0$.
Then, since  $ord_p(\frac{z}{m})=-l$, we have  $ord_p(m)=k+l$.
Hence we can set  $m=p^{k+l}m'$ with $m'\in {\Bbb Q}$ and $ord_p(m')=0$.
Then, by the previous argument
there exists a rational number $q''\ne 0$ such that
\begin{align*}
\displaystyle H^{[\frac{z}{m'p^k}+q'p^l,\epsilon,z]}\left(\frac{z}{m'p^k}+q'p^l\right)
=q''z.
\end{align*}
Since $H^{[\frac{z}{m}+q',\epsilon,z]}(\frac{z}{m}+q')=H^{[p^l(\frac{z}{m}+q'),\epsilon,z]}(p^l(\frac{z}{m}+q'))$
holds, by Lemma \ref{l7},
$\alpha$ has a periodic $\Phi_1^{[\epsilon,z]}$ continued fraction expansion.
\end{proof}

By the proof of Theorem \ref{t2}
we have

\begin{cor}\label{c1}
Let $K$ be a quadratic field over $\Bbb{Q}$.
Let $z\in K$ satisfy Condition {\bf H} and $K={\Bbb Q}(z)$.
Let $x^2+ux+vp^k$ be a minimal polynomial of $z$, where $u,v\in {\Bbb Q}$
, $ord_p(u)=ord_p(v)=0$ and $k>0$.
Let $v=\dfrac{v_1}{v_2}$, where $v_1, v_2$ are relatively prime integers with $v_2>0$.
For $\alpha\in K$ with $\alpha\notin \Bbb{Q}$,
$\alpha$ has a purely periodic $\Phi_1^{[\epsilon,z]}$ continued fraction expansion if and only if
\begin{align*}
\alpha\in \{ qz|
\text{$q\in {\Bbb Q}$ and $ord_p(q)=0$ }\}.
\end{align*}
\end{cor}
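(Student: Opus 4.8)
The plan is to read the corollary off the proofs of Lemma~\ref{l7} and Theorem~\ref{t2}. Since $s=1$, the matrix $S$ occurring in $\Phi_1^{[\epsilon,z]}$ is the $1\times1$ identity and the translation part is $\bar 0$, so one step of the $\Phi_1^{[\epsilon,z]}$ algorithm is simply $\overline{\alpha}\mapsto H^{[\overline{\alpha},\epsilon,z]}(\overline{\alpha})$. Let $\mathcal{S}:=\{\,\delta z/m : m\in\Bbb{Z}_{>0}\text{ divides }v_1,\ \delta\in\{-1,1\}\,\}$ be the finite set appearing in the statement (note $\gcd(v_1,v_2)=1$ together with $ord_p(v)=0$ forces $ord_p(v_1)=ord_p(v_2)=0$, and condition~(H) forces $ord_p(z)=k$). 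I would base the argument on three facts, all essentially contained in those two proofs: (i) $\mathcal{S}$ is forward invariant under one step of $\Phi_1^{[\epsilon,z]}$; (ii) for every $\alpha\in K\setminus\Bbb{Q}$ there is an $N$ with $\overline{\alpha}^{(n)}\in\mathcal{S}$ for all $n\ge N$; (iii) on $\mathcal{S}$ the step acts by $\delta z/m\mapsto(-\epsilon\delta)\,z/(v_1/m)$, an involution.

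For the ``if'' direction, take $\alpha=\delta z/m\in\mathcal{S}$. Then $q:=\delta/m$ has $ord_p(q)=0$, hence $ord_p(qz)=ord_p(z)=k>0$, so the computation in the proof of Lemma~\ref{l7} (the case $ord_p(q)=0$) applies with $m_1=\delta$, $m_2=m$, $\gcd(m_2,v_1)=m$ and $\gcd(m_1,v_2)=1$, and yields $H^{[\alpha,\epsilon,z]}(\alpha)=-\epsilon z/(\delta(v_1/m))=(-\epsilon\delta)\,z/(v_1/m)\in\mathcal{S}$; this is fact~(iii), and fact~(i) is its immediate consequence. Applying the step once more and using $v_1/(v_1/m)=m$ together with $(-\epsilon)(-\epsilon\delta)=\delta$ gives $\overline{\alpha}^{(2)}=\overline{\alpha}^{(0)}$, so the $\Phi_1^{[\epsilon,z]}$ expansion of $\alpha$ is purely periodic (of period $1$ or $2$).

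For the ``only if'' direction, suppose $\alpha\in K\setminus\Bbb{Q}$ has a purely periodic expansion, say $\overline{\alpha}^{(0)}=\overline{\alpha}^{(n)}$ with $n\ge1$; since the step is single-valued this forces $\overline{\alpha}^{(jn)}=\alpha$ for every $j\ge0$. By fact~(ii) choose $N$ with $\overline{\alpha}^{(m)}\in\mathcal{S}$ for all $m\ge N$, and then $j$ with $jn\ge N$: we obtain $\alpha=\overline{\alpha}^{(jn)}\in\mathcal{S}$. To obtain fact~(ii) I would reuse the proof of Theorem~\ref{t2}: $H^{[\alpha,\epsilon,z]}(\alpha)$ has either the form $qz$ or the form $z/m+q'$ with $ord_p(q')\le0$, and in the latter case (handling the subcases $ord_p(q')=0$ and $ord_p(q')<0$ exactly as there) one reaches a remainder of the form $q''z$ after at most two steps; since every remainder lies in $E$ one has $ord_p(q''z)\ge1$, so Lemma~\ref{l7} applies and forces the subsequent remainders into $\mathcal{S}$, after which fact~(i) propagates membership to all larger indices.

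The only real work is in verifying fact~(iii): keeping track of both greatest common divisors, of the sign $-\epsilon\,\mathrm{sign}(\delta)$, of the positivity of the new denominator $v_1/m$, and of the absence of any further cancellation in $z/(v_1/m)$, all of which rest on $ord_p(v_1)=ord_p(v_2)=0$. The remaining point is mostly bookkeeping, namely checking in the proof of Theorem~\ref{t2} that every $\alpha\in K\setminus\Bbb{Q}$ really is carried to a remainder of the form $q''z$ within a bounded number of steps, so that the appropriate case of Lemma~\ref{l7} can be invoked.
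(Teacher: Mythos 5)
Your proof is correct and takes essentially the same route the paper intends: the paper offers no argument beyond ``by the proof of \thmref{t2},'' and your three facts are exactly the content of the computations in \lemref{l7} (the $\gcd$ bookkeeping giving the involution $\delta z/m\mapsto -\epsilon\delta z/(v_1/m)$ on the finite invariant set) and in \thmref{t2} (every irrational reaches that set within boundedly many steps), combined with the observation that pure periodicity plus determinism forces $\alpha$ itself into the eventually-absorbing set. Note you have, correctly, read the set in the statement as $\{\delta z/m\}$ rather than the literal $\{\delta/m\}$, which must be a typo since the latter consists of rationals while $\alpha\notin\Bbb{Q}$.
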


\noindent
{Remark}.
We remark that if $K$ is a quadratic field, then
 $\Phi^{[\epsilon,z]}_1=\Phi^{[\epsilon,z],(n)}_2$
holds for $n\in {\Bbb Z}_{\geq 1}$.
Therefore, these continued fraction algorithms coincide with each other.

In the similar manner, we have

\begin{thm}\label{t3}
Let $K\subset {\Bbb Q}_p$ be a quadratic field over $\Bbb{Q}$.
Then, every rational number $\alpha$
has a finite  $\Phi_3^{[z]}$ continued fraction expansion.
For every $\alpha\in K$ with $\alpha\notin \Bbb{Q}$,
$\alpha$ has a periodic $\Phi_3^{[z]}$ continued fraction expansion.
\end{thm}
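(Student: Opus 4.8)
\textbf{Proof proposal for Theorem \ref{t3}.}
The plan is to imitate the proof of Theorem \ref{t2}, replacing the role played by the $c$-map $H^{[\overline{\alpha},\epsilon,z]}$ with the $c$-map $G^{[\overline{\alpha},\epsilon,z]}_s$ followed by the integral change of basis $\tau_z$, and to establish a finiteness-of-denominators argument that forces periodicity. First I would reduce to the case $\alpha\in p{\Bbb Z}_p$ using Lemma \ref{l2.5} (more precisely, the fact that after finitely many steps the orbit enters $E$). For $\alpha\in{\Bbb Q}$ with $\alpha\in p{\Bbb Z}_p$ I would show, exactly as in Theorem \ref{t2}, that one application of $G^{[\overline{\alpha},1]}_s$ lands on a rational number lying in $p{\Bbb Z}_p$, so that the subsequent $p$-reduced/floor normalization $\tau_z,\gamma'$ sends it to $\overline 0$; hence $\alpha$ has a finite $\Phi_3^{[z]}$ continued fraction expansion.

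For $\alpha\in K\setminus{\Bbb Q}$, writing $z$ with minimal polynomial $x^2+ux+vp^k$, $v=v_1/v_2$, I would track the remainder in the coordinate $\overline{\alpha}^{(n)}=q_n z + r_n$ with $q_n,r_n\in{\Bbb Q}$. The key computation is the one already carried out in the proof of Theorem \ref{t2} and in Lemma \ref{l7}: applying $G^{[\overline{\alpha},\epsilon]}_s$ to $qz+q'$ and clearing the constant term produces a new remainder of the form $\delta z/m$ with $\delta\in\{-1,1\}$ and $m$ a divisor of $v_1$ (in the $s=1$ quadratic case the $p$-reduced matrix $\tau_z$ acts on a $1\times 1$ block and just multiplies by a unit of $U_p\cap{\Bbb Q}$, while $\gamma'$ subtracts the tail $\langle\cdot\rangle_p$, so it does not enlarge the set of possible $m$'s beyond the divisors of $v_1$). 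Therefore all remainders from some point on lie in the finite set $\{\delta z/m : \delta\in\{-1,1\},\ m\mid v_1\}$, which forces $\overline{\alpha}^{(m_1)}=\overline{\alpha}^{(m_2)}$ for some $m_1\ne m_2$, i.e. periodicity.

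The main obstacle is verifying that the extra ingredients present in $\Phi_3^{[z]}$ but absent in $\Phi_1^{[\epsilon,z]}$ — namely the $p$-reduced matrix $pr(M'_{\overline{\alpha}})$ and the vector $\gamma'$ built from the $\langle l_{is+1}\rangle_p$ — really do preserve the finiteness of the denominator set. Concretely, one must check that when $s=1$ the matrix $M'_{\overline{\alpha}}$ is the single scalar $q_n$ (the $z$-coefficient of the remainder), so $pr$ of it is $1/(|q_n|_p q_n)\in U_p\cap{\Bbb Q}$, and that multiplying by this unit and then subtracting $\gamma'$ reproduces exactly the normalization used in Lemma \ref{l7}; in particular the resulting $z$-coefficient has the form $\pm 1/m$ with $m\mid v_1$. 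This is a bounded bookkeeping check rather than a conceptual difficulty, which is why the statement asserts the result holds ``in the similar manner''. The remaining cases $ord_p(q')=0$ and $ord_p(q')<0$ are handled verbatim as in the proof of Theorem \ref{t2}, invoking the finite-set argument in place of Lemma \ref{l7}.
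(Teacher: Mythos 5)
Your proposal follows exactly the route the paper intends: the paper offers no written proof of Theorem~\ref{t3} beyond the phrase ``In the similar manner, we have,'' referring back to the proof of Theorem~\ref{t2} and Lemma~\ref{l7}, and your plan (reduce to $p\Bbb{Z}_p$, kill rationals in one step, and trap the irrational remainders in a finite set of the form $\delta z/m$ plus the extra $\tau_z,\gamma'$ bookkeeping) is a faithful elaboration of that. One detail in your bookkeeping is off, though it does not break the argument: for $s=1$ the matrix $M'_{\overline{\alpha}}$ is the $1\times 1$ scalar $q$, and $pr((q))=p^{ord_p(q)}/q$, so the normalized $z$-coefficient is the power $p^{ord_p(q)}$ rather than $\pm 1/m$ with $m\mid v_1$; the remainder thus has the form $p^jz$ (the constant term being forced to $0$ since the image lies in $p\Bbb{Z}_p$), and since $p^k/z=(-z-u)/v$ one checks that every $p^jz$ is sent to $z$ at the next step, so the orbit stabilizes and periodicity follows even more directly than in Lemma~\ref{l7}.
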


\section{Multidimensional Cases}
We can expect that higher dimensional $p$-adic versions of Lagrange's Theorem holds for some of our algorithms
from numerical experiments (see Section \ref{NE}), although we are not successful to give any proof at the moment.
 Dubois and Paysant-Le Roux \cite{D} showed that for every real
cubic number field there is a pair of numbers which has a periodic Jacobi-Perron expansion.
In this section, we show that  a $p$-adic version holds for the $\Phi_1^{[\epsilon,z]}$ continued fraction
 algorithm and there exist infinitely many elements depending on many parameters
 which have periodic $\Phi_3^{[\epsilon,z]}$ continued fraction expansions
for any finite extension of ${\Bbb Q}$  in ${\Bbb Q}_p$.

Let $K$ be a cubic field over ${\Bbb Q}$ and $K\subset {\Bbb Q}_p$.
Let $z\in K$ satisfy  Condition {\bf H} and $K={\Bbb Q}(z)$.
Since $mz$ satisfies Condition {\bf H} for an arbitrary integer $m$ which is relatively prime to $p$,
 we can choose $z$ which  is integral over  ${\Bbb Z}$.
Let $z$ be integral over  ${\Bbb Z}$ and
\begin{align}\label{eq2}
x^3+a_1x^2+a_2x+a_3p^k
\end{align}
be the minimal polynomial of $z$, where $a_i\in {\Bbb Z}$ for $1\leq i\leq 3$
, $ord_p(a_2)=ord_p(a_3)=0$ and $k\in {\Bbb Z}_{>0}$.

\begin{thm}\label{t4}
Let
$\overline{\alpha}:=(z,z^2)$.
Then,
$\overline{\alpha}$ has a periodic $\Phi_1^{[\epsilon,z]}$ continued fraction expansion.
\end{thm}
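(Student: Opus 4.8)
The plan is to compute the orbit of $\overline{\alpha}^{(0)}:=(z,z^2)^T$ under $T_{\Phi_1^{[\epsilon,z]}}$ explicitly and exhibit a repeated remainder; periodicity then follows from the definition. Since $z$ satisfies (H) we have $z\in p\Bbb{Z}_p$, and from $z(z^2+a_1z+a_2)=-a_3p^k$ together with $ord_p(z^2+a_1z+a_2)=ord_p(a_2)=0$ we get $ord_p(z)=k>0$; hence $ord_p(z^2)=2k$ and $\overline{\alpha}^{(0)}\in E$. The same relation yields the identity
\begin{align*}
\frac{p^k}{z}=-\frac{1}{a_3}\left(z^2+a_1z+a_2\right),
\end{align*}
which rewrites reciprocals of $z$ in the basis $\{1,z,z^2\}$; combined with the factorization $z^2+a_1z=z(z+a_1)$, this carries out all the algebra below.

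Next I would simplify the step map along the orbit. Recall $\Phi_1^{[\epsilon,z]}(\overline{\alpha})=(1,H_1^{[\overline{\alpha},\epsilon,z]},S,\overline{0})$ with $S$ the permutation matrix interchanging the two coordinates, so $T_{\Phi_1^{[\epsilon,z]}}(\overline{\alpha})=S\,H_1^{[\overline{\alpha},\epsilon,z]}(\overline{\alpha})$. For $\beta=c_0+c_1z+c_2z^2\in K$ write $\beta^{\circ}:=c_1z+c_2z^2$ for its non-constant part. I would check that for every $\overline{\alpha}$ arising in the orbit the normalizing integers $a'$ in the definition of $H_1^{[\overline{\alpha},\epsilon,z]}$ all equal $1$: at each step the relevant expansion has a $z$- or $z^2$-coefficient lying in $\{\pm1,\pm a_3^{-1}\}$, hence of numerator $\pm1$, because $ord_p(a_3)=0$. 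Together with $g_i(\overline{\alpha})\in p\Bbb{Z}_p$ and $a_1z+a_2z^2\in p\Bbb{Z}_p$ (the coefficients being $p$-integral along the orbit, so the subtracted constant term lies in $p\Bbb{Z}_p$), this makes $H_1^{[\overline{\alpha},\epsilon,z]}(\overline{\alpha})$ equal to $G_1^{[\overline{\alpha},\epsilon]}(\overline{\alpha})$ with its constant term deleted. Hence, whenever $\alpha_1\ne0$,
\begin{align*}
T_{\Phi_1^{[\epsilon,z]}}(\overline{\alpha})=\left(\left(\frac{\epsilon\,p^{m}\alpha_2}{\alpha_1}\right)^{\circ},\ \left(\frac{\epsilon\,p^{ord_p(\alpha_1)}}{\alpha_1}\right)^{\circ}\right)^{T},\qquad m:=\max\{ord_p(\alpha_1)-ord_p(\alpha_2),0\}.
\end{align*}

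With this in hand the computation is short. Using the identity and the factorization,
\begin{align*}
(z,z^2)&\ \longmapsto\ \left(\epsilon z,\ -\tfrac{\epsilon}{a_3}(z^2+a_1z)\right)\ \longmapsto\ \left(-\tfrac{\epsilon}{a_3}z,\ -\tfrac{1}{a_3}(z^2+a_1z)\right)\\
&\ \longmapsto\ \left(z,\ z^2+a_1z\right)\ \longmapsto\ \left(\epsilon z,\ -\tfrac{\epsilon}{a_3}(z^2+a_1z)\right),
\end{align*}
so $\overline{\alpha}^{(4)}=\overline{\alpha}^{(1)}$ and the $\Phi_1^{[\epsilon,z]}$ expansion of $(z,z^2)^T$ is eventually periodic with period $3$. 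In each of the four steps the first coordinate $\alpha_1^{(n)}$ is a $p$-adic unit times $z$ (so $ord_p(\alpha_1^{(n)})=k$ and $\alpha_1^{(n)}\ne0$), while the second coordinate has $ord_p\ge k$ (so $m=0$); one then rewrites $\tfrac{\epsilon p^k}{\alpha_1^{(n)}}$ and $\tfrac{\epsilon\alpha_2^{(n)}}{\alpha_1^{(n)}}$ via the identity and the factorization, takes non-constant parts, and swaps coordinates to obtain the displayed values.

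The hard part is not deep but needs care: at each of the four steps one must confirm that $a'=1$, that $m=0$, that the deleted constant term lies in $p\Bbb{Z}_p$, and that $\alpha_1^{(n)}\ne0$, so that the step map truly collapses to ``rewrite $\tfrac{\epsilon p^k}{\alpha_1}$ and $\tfrac{\epsilon\alpha_2}{\alpha_1}$ and delete the constants.'' This is exactly where the denominator-normalizing feature of $\Phi_1^{[\epsilon,z]}$ enters, so that is where I would concentrate the rigor; the re-expansions coming from the minimal polynomial are routine. I would also remark that the argument is uniform in $a_1$: the value of $ord_p(z+a_1)$ only affects $ord_p(\alpha_2^{(n)})$, which is immaterial since $m$ is always $0$, and when $a_1=0$ one simply has $\overline{\alpha}^{(3)}=\overline{\alpha}^{(0)}$, the orbit closing up one step earlier.
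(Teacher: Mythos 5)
Your proposal is correct and follows essentially the same route as the paper: both compute the orbit of $(z,z^2)^T$ explicitly using $\tfrac{p^k}{z}=-\tfrac{1}{a_3}(z^2+a_1z+a_2)$, verify at each step that the normalizing factor $a'$ equals $1$ and that the subtracted constant vanishes, and arrive at exactly the same remainders $\overline{\alpha}^{(1)},\ldots,\overline{\alpha}^{(4)}$ with $\overline{\alpha}^{(4)}=\overline{\alpha}^{(1)}$. The only differences are cosmetic (your ``non-constant part'' shorthand and the extra remark about the $a_1=0$ case), so nothing further is needed.
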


\begin{proof}
We have
\begin{align*}
&g_1^{[\overline{\alpha},\epsilon];(1)}(\overline{\alpha})=\dfrac{\epsilon p^k}{z}-\omega_p\left(\dfrac{\epsilon p^k}{z}\right)\\
&=\dfrac{-\epsilon z^2}{a_3}+\dfrac{-\epsilon a_1z}{a_3}+\dfrac{-\epsilon a_2}{a_3}-\omega_p\left(\dfrac{\epsilon p^k}{z}\right).
\end{align*}
We have
\begin{align*}
&h_1^{[\overline{\alpha},\epsilon,z];(1)}(\overline{\alpha})=\dfrac{-\epsilon z^2}{a_3}+\dfrac{-\epsilon a_1z}{a_3}+
\left\lfloor
\dfrac{-\epsilon a_2}{a_3}
-\omega_p\left(\dfrac{\epsilon p^k}{z}\right)
\right\rfloor_p.
\end{align*}
Since we see that$\displaystyle|h_1^{[\overline{\alpha},\epsilon,z];(1)}(\overline{\alpha})|_p<1$ and
$\displaystyle\left|\frac{-\epsilon z^2}{a_3}+\frac{-\epsilon a_1z}{a_3}\right|_p<1$,
we have
\begin{align*}
\left\lfloor
\dfrac{-\epsilon a_2}{a_3}
-\omega_p\left(\dfrac{\epsilon p^k}{z}\right)
\right\rfloor_p=0.
\end{align*}
Therefore, we have
\begin{align*}
h_1^{[\overline{\alpha},\epsilon,z];(1)}(\overline{\alpha})=\dfrac{-\epsilon z^2}{a_3}+\dfrac{-\epsilon a_1z}{a_3}.
\end{align*}
Thus, we have
\begin{align*}
\overline{\alpha}_1=
SH_1^{[\overline{\alpha},\epsilon,z]}(\overline{\alpha})
=\left(\epsilon z,\dfrac{-\epsilon z^2}{a_3}+\dfrac{-\epsilon a_1z}{a_3}\right).
\end{align*}
Similarly, we have
\begin{align*}
&\overline{\alpha}_2=
SH_1^{[\overline{\alpha}_1,\epsilon,z]}(\overline{\alpha}_1)
=\left(\dfrac{-\epsilon z}{a_3},\dfrac{- z^2}{a_3}+\dfrac{-a_1z}{a_3}\right),\\
&\overline{\alpha}_3=
SH_1^{[\overline{\alpha}_2,\epsilon,z]}(\overline{\alpha}_2)
=\left(z,z^2+a_1z\right),\\
&\overline{\alpha}_4=
SH_1^{[\overline{\alpha}_3,\epsilon,z]}(\overline{\alpha}_3)
=\left(\epsilon z,\dfrac{- \epsilon z^2}{a_3}+\dfrac{-\epsilon a_1z}{a_3}\right).
\end{align*}
Since $\overline{\alpha}_1=\overline{\alpha}_4$,
$\overline{\alpha}$ has a periodic $\Phi_1^{[\epsilon,z]}$ continued fraction expansion.

\end{proof}
Let $K\subset \Bbb{Q}_p$ be a finite extension of ${\Bbb Q}$
of arbitrary degree $>1$. Let $z\in K$ satisfy  Condition {\bf H} and $K={\Bbb Q}(z)$.
Let
\begin{align}\label{eq2}
x^{s+1}+a_1x^s+\ldots +a_{s}x+a_{s+1}p^k
\end{align}
be the minimal polynomial of $z$, where $a_i\in {\Bbb Q}\cap {\Bbb Z}_p$ for $1\leq i\leq s+1$
and $ord_p(a_s)=ord_p(a_{s+1})=0$.
We set $a_0:=1$.

\begin{thm}\label{t5}
Let  $u_i:=\sum_{i\leq j \leq s} a_{ij}z^{s-j+1}$ for $1\leq i\leq s-1$ and $u_s:=z$,
where $a_{ij}\in {\Bbb Q}\cap {\Bbb Z}_p$ for $1\leq i\leq s-1, i\leq j\leq s$
and $ord_p(a_{ii})\in U_p$ for $1\leq i\leq s-1$.
Then, $\overline{\alpha}:=(u_1,\ldots,u_s)$
 has a periodic $\Phi_3^{[z]}$ continued fraction expansion.
\end{thm}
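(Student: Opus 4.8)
The plan is to prove that the remainders $\overline{\alpha}=\overline{\alpha}^{(0)},\overline{\alpha}^{(1)},\overline{\alpha}^{(2)},\ldots$ of $\overline{\alpha}$ take only finitely many values; then $\overline{\alpha}^{(m_1)}=\overline{\alpha}^{(m_2)}$ for some $m_1\ne m_2$, which is by definition the assertion that the $\Phi_3^{[z]}$ expansion is periodic. As a preliminary I would note that $\{1,u_1,\ldots,u_{s-1},u_s\}$ is a $\Bbb{Q}$-basis of $K$: since $u_s=z$ and the leading term of $u_i$ in the power basis $1,z,\ldots,z^s$ is $a_{ii}z^{s-i+1}$ with $a_{ii}\in U_p$, the matrix expressing $u_1,\ldots,u_s$ in the basis $z,z^2,\ldots,z^s$ is triangular with $p$-adic units on the diagonal, hence invertible. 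By \lemref{l3} the same holds along the orbit, so $\{1,\alpha_1^{(n)},\ldots,\alpha_s^{(n)}\}$ is a $\Bbb{Q}$-basis of $K$ for every $n$; in particular $\alpha_s^{(n)}\ne 0$ and $\overline{\alpha}^{(n)}\ne\overline{0}$ for all $n$, so the expansion is infinite and the $c$-map $\Phi_3^{[z]}$ uses its nontrivial branch (index $s$) at every step.

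The heart of the matter is one explicit step of the algorithm, performed in the power basis $1,z,\ldots,z^s$ and using the relation $z^{s+1}=-(a_1z^s+\cdots+a_sz+a_{s+1}p^k)$ from the minimal polynomial together with $ord_p(z)=k$ (an easy consequence of condition (H)). Since each $u_i$ is divisible by $z$ one has $ord_p(u_i)\ge ord_p(z)$ for $i<s$, so the exponents occurring in $G^{[\overline{\alpha},\epsilon,z]}_s$ all vanish; combining this with $p^k/z=-a_{s+1}^{-1}(z^s+a_1z^{s-1}+\cdots+a_s)$ shows that $\overline{\beta}:=G^{[\overline{\alpha},\epsilon,z]}_s(\overline{\alpha})$ is again an $s$-tuple of polynomials in $z$ of degree $\le s$. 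Writing its coefficient matrix $M_{\overline{\beta}}\in M(s\times(s+1);\Bbb{Q})$, one checks that the $z$-part $M'_{\overline{\beta}}$ becomes, after moving its last row to the top, an upper triangular matrix with diagonal $-\epsilon a_{s+1}^{-1},\epsilon a_{11},\ldots,\epsilon a_{s-1,s-1}$, all in $U_p\cap\Bbb{Q}$, while the constant column $\overline{c}$ lies in $(p\Bbb{Z}_p\cap\Bbb{Q})^s$. Running the $p$-reduced algorithm of Fig.~4.1 on $M'_{\overline{\beta}}$: each pivot is a unit, hence rescaled to $1$ by an operation of type (b), and the remaining entries are then cleared by operations of type (c) with multipliers in $\Bbb{Z}_p\cap\Bbb{Q}$ (because every entry of $M'_{\overline{\beta}}$ lies there), so the $p$-reduced form of $M'_{\overline{\beta}}$ is $I_s$. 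Consequently $\tau_z(\overline{\beta})$ (up to a sign coming from $\epsilon$) inverts $M'_{\overline{\beta}}$, so $\tau_z(\overline{\beta})\,M_{\overline{\beta}}=[\,\pm I_s\mid\overline{d}\,]$ with $\overline{d}\in(p\Bbb{Z}_p\cap\Bbb{Q})^s$ bounded in terms of $\overline{c}$ and the $a_{ij}$, and therefore
\[
\overline{\alpha}^{(1)}=T_{\Phi_3^{[z]}(\overline{\alpha})}(\overline{\alpha})=\pm(z^s,z^{s-1},\ldots,z^2,z)^T+\overline{d}+\gamma'(\overline{\alpha}).
\]

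It then remains to close the loop. In the case I expect to occur, the normalization built into $\Phi_3^{[z]}$ (the vector $\gamma'$, together with the $\langle\,\cdot\,\rangle_p$-truncation in the $H$-type construction) is designed precisely to cancel $\overline{d}+\gamma'(\overline{\alpha})$, giving $\overline{\alpha}^{(1)}=\pm(z^s,z^{s-1},\ldots,z^2,z)^T$; this tuple is again of the form treated in the theorem (take $a_{ij}=\pm\delta_{ij}$), so repeating the computation yields $\overline{\alpha}^{(2)}\in\{\pm(z^s,\ldots,z)^T\}$ and, after at most one further step, $\overline{\alpha}^{(m)}=\overline{\alpha}^{(m+1)}$, whence the expansion is purely periodic with a short pre-period. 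If instead a nonzero constant vector survives, one argues by induction: the set $\mathcal{F}$ of $s$-tuples whose $z$-power coefficient matrix is $p$-reduced, lies in $M_s(\Bbb{Z}_p\cap\Bbb{Q})$, and has pivot $p$-valuations and denominators bounded by fixed quantities built from $a_{s+1}$ and the $a_{ij}$, is finite and invariant under one step of $\Phi_3^{[z]}$, so again only finitely many remainders occur and the pigeonhole argument of the first paragraph applies. The main obstacle, and the bulk of the work, is exactly this invariance together with the finiteness of $\mathcal{F}$: one must verify that iterating ``divide by the $s$-th coordinate, reduce modulo the minimal polynomial, $p$-reduce, subtract the fractional constant'' never enlarges the denominators of the coefficients --- a greatest-common-divisor argument in the spirit of the proof of \lemref{l7} --- nor the $p$-adic valuations of the coordinates, which stay controlled because the valuation of the pivot used at each step is pinned down by $ord_p(z)=k$ and by the shape of $G^{[\overline{\alpha},\epsilon,z]}_s$; the remaining verifications are routine bookkeeping of the coefficient matrix through the $p$-reduced algorithm.
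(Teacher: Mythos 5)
Your first-step computation is essentially the paper's proof: expand $G_s^{[\overline{\alpha},\epsilon,z]}(\overline{\alpha})$ in the power basis using $p^k/z=-a_{s+1}^{-1}(z^s+a_1z^{s-1}+\cdots+a_s)$, observe that the resulting $z$-coefficient matrix $M'$ is (up to a row permutation) upper triangular with diagonal $-a_0/a_{s+1},a_{11},\ldots,a_{s-1,s-1}$, all in $U_p$, so the $p$-reduced algorithm sends it to the identity and $pr(M')M'=I_s$. But you stop exactly at the decisive point: you write $\overline{\alpha}^{(1)}=\pm(z^s,\ldots,z)^T+\overline{d}+\gamma'(\overline{\alpha})$ and then only ``expect'' the constant vector to cancel. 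That cancellation is not a matter of design faith; it is a short verification and it is what makes the theorem true. Each coordinate of $G_s^{[\overline{\alpha},\epsilon,z]}(\overline{\alpha})$ lies in $p\Bbb{Z}_p$, and its $z^j$-coefficients for $j\geq 1$ lie in $\Bbb{Z}_p\cap\Bbb{Q}$; since $ord_p(z)=k>0$, the $z$-part lies in $p\Bbb{Z}_p$, hence so does the rational constant term. Applying $pr(M')\in GL(s,\Bbb{Z}_p\cap\Bbb{Q})$ keeps the constant column $(l_{i,s+1})$ in $(p\Bbb{Z}_p\cap\Bbb{Q})^s$, so $\langle l_{i,s+1}\rangle_p=l_{i,s+1}$ and $\gamma'(\overline{\alpha})$ cancels it exactly (this is the same ``$|h|_p<1$ and the $z$-part has $|\cdot|_p<1$, hence the $\lfloor\cdot\rfloor_p$-part vanishes'' argument used in \thmref{t4} and \lemref{l7}). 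There is also no sign ambiguity: the reduction normalizes unit pivots to $1$, so $\overline{\alpha}^{(1)}=(z^s,z^{s-1},\ldots,z)^T$ on the nose. Running the identical computation on this tuple (the case $a_{ij}=\delta_{ij}$) shows it is a fixed point, giving $\overline{\alpha}^{(1)}=\overline{\alpha}^{(2)}$ and periodicity.

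Because you leave the cancellation unproved, you are forced into a fallback branch, and that branch is a genuine gap rather than a proof. The set $\mathcal{F}$ you describe --- tuples whose coefficient matrices are $p$-reduced, lie in $M_s(\Bbb{Z}_p\cap\Bbb{Q})$, and have bounded denominators and bounded pivot $p$-valuations --- is not finite: nothing in those conditions bounds the archimedean size of the numerators, which is precisely the quantity ($Height_z$) the paper monitors experimentally and cannot control in general. You also acknowledge that the invariance of $\mathcal{F}$ under one step is ``the bulk of the work'' without carrying it out. So as written the argument does not close; the fix is not to repair the fallback but to delete it and prove the one-line cancellation above, after which the first branch already completes the proof.
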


\begin{proof}
We have
\begin{align*}
&g_s^{[\overline{\alpha},1];(s)}(\overline{\alpha})=\dfrac{p^k}{z}-\omega_p\left(\dfrac{p^k}{z}\right)\\
&=\sum_{0\leq i \leq s} \dfrac{-a_{s-i}z^{i}}{a_{s+1}}
-\omega_p\left(\dfrac{p^k}{z}\right).
\end{align*}
For $1\leq i\leq s-1$ we have
\begin{align*}
&g_s^{[\overline{\alpha},1];(i)}(\overline{\alpha})=\dfrac{u_i}{z}-\omega_p\left(\dfrac{u_i}{z}\right)\\
&=\sum_{i\leq j \leq s} a_{ij}z^{s-j}-\omega_p\left(\dfrac{u_i}{z}\right).
\end{align*}
Then, $M'_{\overline{\alpha}}\in M_{s}(\Bbb{Q})$ is given by
\begin{align*}
M'_{\overline{\alpha}}=
\begin{pmatrix}
0&a_{11}&\ldots&\ldots&a_{1s-1}\\
0&0&a_{22}&\ldots&a_{2s-1}\\
\vdots&\vdots&&\ddots&\vdots\\
0&0&\ldots&\ldots&a_{s-1s-1}\\
\dfrac{-a_{0}}{a_{s+1}}&\ldots&\ldots&\ldots&\dfrac{-a_{s-1}}{a_{s+1}}\\
\end{pmatrix}.
\end{align*}
We see that
every element of $M'_{\overline{\alpha}}$ is in ${\Bbb Z}_p$,
$ord_p(a_{ii})=0$ for $1\leq i\leq s-1$
and $ord_p(\dfrac{-a_{0}}{a_{s+1}})=0$.
Therefore, we see 
that $M'_{\overline{\alpha}}$ is converted to the unit matrix $I$ by the $p$-reduction algorithm.
Since $pr(M'_{\overline{\alpha}})M'_{\overline{\alpha}}=I$ holds, we have
\begin{align*}
\overline{\alpha}_2=pr(M'_{\overline{\alpha}})(h_1^{[\overline{\alpha},\epsilon,z];(1)}(\overline{\alpha}),\ldots,h_1^{[\overline{\alpha},\epsilon,z];(s)}(\overline{\alpha}))^{T}
=(z^s,z^{s-1},\ldots,z)^{T}.
\end{align*}
Similarly, we have $\overline{\alpha}_3=(z^s,z^{s-1},\ldots,z)^{T}$.
Therefore,
$\overline{\alpha}$
 has a periodic $\Phi_3^{[\epsilon,z]}$ continued fraction expansion.
\end{proof}

\section{Numerical Experiments}\label{NE}
In this section, we give some numerical results on our algorithms.
For the calculation of the tables, we used  computers equipped with GiNaC \cite{BFK} on GNU C++.

Concerning our experiments, $Height_z(\overline{\alpha})$, defined
below, plays an important role.
We suppose that $K={\Bbb Q}(z)$ with $z\in {\Bbb Q}_p$
satisfying Condition {\bf H}.
For $\alpha\in K$ with
\begin{align*}
\alpha=a_0+a_1z+\ldots +a_sz^{s} \ (a_0,\ldots,a_s\in {\Bbb Q}),
\end{align*}
we define
\begin{align*}
&Height_z(\alpha):=\max_{0\leq i\leq s} (Height(a_i)),\\
\end{align*}
where
$Height(a):=\max(|b|,|c|)$ for $a=b/c\neq 0 (b\in {\Bbb Z},\ c\in {\Bbb Z}_{>0})$ and
$Height(0):=0$.
For $\overline{\alpha}=(\alpha_1,\ldots,\alpha_{s})\in D=K^s$, we define
\begin{align*}
&Height_z(\overline{\alpha}):=\max_{1\leq i\leq s} (Height(\alpha_i)).\\
\end{align*}

For a given $\overline{\alpha}\in D=K^s$, we compute the
sequence $\{\overline{\alpha}_n\}$ according to a continued fraction algorithm $Algor$
until we find $n$ such that
\begin{align*}
&(\alpha \text{ in Case }\mathcal{P})\
\overline{\alpha}_n=\overline{\alpha}_m\ (0\leq \exists\  m<n)\ \&\
Height_z(\overline{\alpha}_n)\leq 10^{300},\\
&(\alpha  \text{ in Case } \mathcal{H})\
Height_z(\overline{\alpha}_n)> 10^{300}.
\end{align*}
We remark that the class $\mathcal{P}$ and $\mathcal{H}$
depend on $Algor$, which will be written
$\mathcal{P}(Algor)$, $\mathcal{H}(Algor)$
for some specified  algorithm $Algor$ later.\\

{\bf Example}.
Let $p=2$ and  $z\in p{\Bbb Z}_p$ be the root of $x^3+x-20=0$.

Let
$\overline{\alpha}:=(z,z^2)$.
Then, from the proof of Theorem \ref{t4}
$\overline{\alpha}$ has a periodic $\Phi_1^{[1,z]}$ continued fraction expansion with the length of the period $=3$ and
the length of the pre-period $=1$. We also see that
$Height_z(\overline{\alpha}_n)\leq 10^{300}$ for $0\leq n\leq 4$.
Hence, $\overline{\alpha}$ is in Case $\mathcal{P}$ according to the $\Phi_1^{[1,z]}$ continued fraction algorithm.
Let $\overline{\beta}:=(139/38-186z/113-122z^2/125,-193/158+17z/26-49z^2/93)$.
Then, we see that $Height_z(\overline{\beta}_6)> 10^{300}$ and
$\overline{\beta}$ is in Case $\mathcal{H}$ according to the $\Phi_1^{[1,z]}$ continued fraction algorithm.
\\

In Table 1, we give the  periodicity test of the
expansions obtained by the $\Phi_0^{[\epsilon,z]}$ continued fraction algorithm
for  $\alpha \in D = {\Bbb Q}(z) \subset {\Bbb Q}_p$,
where $\alpha$ runs over a subset of $D$ of 100 elements chosen by a pseudorandom
algorithm, $p$ runs over all the primes $<100$, and $z$ runs over the set
$\{z\in p{\Bbb Z}_p| x^2+ax+bp\text{\ is the minimal polynomial of }z, a,b\in {\Bbb Z},\ 0<a\leqq 10,-10\leq b\leq 10, ord_p(a)=0\}$.
We generate a set of 100 elements denoted by $Test_z$ in $D$  by using the pseudorandom algorithm given by Saito and Yamaguchi \cite{SY} as follows:

Let $0.d_1d_2\cdots$ be the binary expansion of the real positive root of $x^2+x-1$,
where $\{d_1,d_2\cdots\}$ is generated by the algorithm  \cite{SY} in which
they showed that the sequence has good properties as pseudorandom numbers.
We set
\begin{align*}
 &e_i=\sum_{k=1}^8 2^{k-1}d_{8i+k} \ \ \text{for $i\in {\Bbb Z}_{\geq 0}$},\\ &t_i=(-1)^{e_{6i+2}}\frac{e_{6i+1}}{e_{6i}+1}+(-1)^{e_{6i+5}}\frac{e_{6i+4}}{e_{6i+3}+1}z
 \ \ \text{for $i\in {\Bbb Z}_{\geq 0}$}.
\end{align*}
Let $m$ be the least integer such that
\begin{align*}
\sharp(\{t_i|i\leq m\}\backslash\{t_i|t_i\in {\Bbb Q}, i\leq m\})=100.
\end{align*}
We define $Test_z\subset D$ as
\begin{align*}
Test_z:=\{t_i|i\leq m\}\backslash\{t_i|t_i\in {\Bbb Q}, i\leq m\}.
\end{align*}

\newpage

\begin{center}
Table 1 $\Phi_0^{[\epsilon,z]}$ continued fraction algorithm with quadratic $z$
\end{center}

\begin{tabular}{|c|r|r|r|r|}
\hline
prime  &$1^*$&$2^*$&$3^*$ & $4^*$\\
number&&&&\\
\hline
2&0&7800&0&7800\\
\hline
3&0&11700&0&11700\\
\hline
5&0&14400&1&14399\\
\hline
7&0&16600&2&16598\\
\hline
11&0&19000&0&19000\\
\hline
13&0&19200&0&19200\\
\hline
17&0&9600&0&19600\\
\hline
19&0&19800&0&19800\\
\hline
23&0&20000&0&20000\\
\hline
29&0&20000&0&20000\\
\hline
31&0&20000&0&20000\\
\hline
37&0&20000&0&20000\\
\hline
41&0&20000&0&20000\\
\hline
\end{tabular}
\begin{tabular}{|c|r|r|r|r|}
\hline
prime&\ $1^*$& $2^*$&\ $3^*$& $4^*$\\
number&&&&\\
\hline
43&0&20000&0&20000\\
\hline
47&0&20000&0&20000\\
\hline
53&0&20000&0&20000\\
\hline
59&0&20000&0&20000\\
\hline
61&0&20000&0&20000\\
\hline
67&0&20000&0&20000\\
\hline
71&0&20000&0&20000\\
\hline
73&0&20000&0&20000\\
\hline
79&0&20000&0&20000\\
\hline
83&0&20000&0&20000\\
\hline
89&0&20000&0&20000\\
\hline
97&0&20000&0&20000\\
\hline
&&&&\\
\hline
\end{tabular}
\\$1^*$ (resp., $3^*$)is the number of elements $\alpha$ in $\mathcal{P}(\Phi_0^{[1,z]})$
(resp., $\mathcal{P}(\Phi_0^{[-1,z]})$) and
$2^*$(resp., $4^*$) is
the number of elements $\alpha$ in $\mathcal{H}(\Phi_0^{[1,z]})$
(resp., $\mathcal{H}(\Phi_0^{[-1,z]})$).
\\

In Table 2 for the prime numbers $p$ with $2\leq p \leq 100$ and $\{z\in p{\Bbb Z}_p| x^3+ax+bp\text{\ is the minimal polynomial of }z, a,b\in {\Bbb Z},\ 0<a\leqq 10,-10\leq b\leq 10, ord_p(a)=0\}$
and the set of 100 elements in $D$ denoted by $Test_z^{(2)}$ (described later), we observe periodicity
by the $\Phi_1^{[\epsilon,z]}$  continued fraction algorithm.
Let $0.d_1^{(1)}d_2^{(1)}\cdots$
(resp., $0.d_1^{(2)}d_2^{(2)}\cdots$) be the binary expansion of the real positive root of $x^2+2x-1$(resp., $x^2+2x-2$),
where $\{d_1^{(j)},d_2^{(j)}\cdots\}$ for $j=1,2$ are generated by the algorithm  \cite{SY}.
We set that for $j=1,2$
\begin{align*}
 &e_i^{(j)}=\sum_{k=1}^8 2^{k-1}d_{8i+k}^{(j)} \ \ \text{for $i\in {\Bbb Z}_{\geq 0}$},\\
&t_i^{(j)}=(-1)^{e_{9i+2}^{(j)}}\frac{e_{9i+1}^{(j)}}{e_{9i}^{(j)}+1}+(-1)^{e_{9i+5}^{(j)}}
\frac{e_{9i+4}^{(j)}}{e_{9i+3}^{(j)}+1}z
+(-1)^{e_{9i+8}^{(j)}}
\frac{e_{9i+7}^{(j)}}{e_{9i+6}^{(j)}+1}z^2
 \ \ \text{for $i\in {\Bbb Z}_{\geq 0}$}.
\end{align*}
Let $m$ be the least integer such that
\begin{align*}
&\sharp(\{(t_i^{(1)},t_i^{(2)})|i\leq m\}\backslash\{(t_i^{(1)},t_i^{(2)})|
\text{$1,t_i^{(1)}, t_i^{(2)}$ are linearly dependent over ${\Bbb Q}$}, i\leq m\})\\
&=100.
\end{align*}
We define $Test_z^{(2)}\subset D$ as
\begin{align*}
Test_z^{(2)}:=
\{(t_i^{(1)},t_i^{(2)})|i\leq m\}\backslash\{(t_i^{(1)},t_i^{(2)})|
\text{$1,t_i^{(1)}, t_i^{(2)}$ are linearly dependent over ${\Bbb Q}$}, i\leq m\}.
\end{align*}

\newpage

\begin{center}
Table 2　$\Phi_1^{[-1,z]}$ continued fraction algorithm with cubic $z$
\end{center}
\begin{tabular}{|c|r|r|r|r|}
\hline
prime&\ $1^*$& $2^*$&\ $3^*$& $4^*$\\
number&&&&\\
\hline
2&6767&1633&6773&1627\\
\hline
3&10328 &2272 &10340 &2260 \\
\hline
5&11656 &3144 &11660 &3140 \\
\hline
7&13982 &3218 &13991 &3209 \\
\hline
11&15451 &3749 &15458 &3742\\
\hline
13&16553 &2647 &16535 &2665 \\
\hline
17&17424 &1976 &17405 &1995\\
\hline
19&17358 &2242 &17367 &2233\\
\hline
23&17596 &2204 &17600 &2200\\
\hline
29&16911 &2889 &16915 &2885 \\
\hline
31&18435 &1365 &18432 &1368 \\
\hline
37&18795 &1005 &18795 &1005 \\
\hline
41&17750 &2050 &17742 &2058\\
\hline
\end{tabular}
\begin{tabular}{|c|r|r|r|r|}
\hline
prime&\ $1^*$& $2^*$&\ $3^*$& $4^*$\\
number&&&&\\
\hline
43&18897 &903 &18904 &896\\
\hline
47&18889 &1111 &18882 &1118 \\
\hline
53&18178 &1622 &18188 &1612 \\
\hline
59&18805 &995 &18807 &993 \\
\hline
61&19396 &604 &19396 &604 \\
\hline
67&19317 &483 &19314 &486 \\
\hline
71&19548 &252 &19546 &254 \\
\hline
73&18710 &1090 &18716 &1084 \\
\hline
79&19153 &847 &19156 &844 \\
\hline
83&19622 &178 &19620 &180 \\
\hline
89&19340 &460 &19339 &461 \\
\hline
97&19052 &948 &19046 &954\\
\hline
&&&&\\
\hline
\end{tabular}
\\$1^*$ (resp., $3^*$)is the number of elements $\alpha$ in $\mathcal{P}(\Phi_1^{[1,z]})$
(resp., $\mathcal{P}(\Phi_1^{[-1,z]})$) and
$2^*$(resp., $4^*$) is
the number of elements $\alpha$ in $\mathcal{H}(\Phi_1^{[1,z]})$
(resp., $\mathcal{H}(\Phi_1^{[-1,z]})$).
\\

In Table 3 for the prime numbers $p$ with $2\leq p \leq 100$ and $\{z\in p{\Bbb Z}_p| x^3+ax+bp\text{\ is the minimal polynomial of }z, a,b\in {\Bbb Z},\ 0<a\leqq 10,-10\leq b\leq 10, ord_p(a)=0\}$
and 100 elements in $D$ given in the same way as Table 2 we observe periodicity
by the  $\Phi_2^{[\epsilon,z],(1)}$ continued fraction algorithm.

\begin{center}
Table 3 $\Phi_2^{[-1,z],(1)}$ continued fraction algorithm with cubic $z$
\end{center}

\begin{tabular}{|c|r|r|r|r|}
\hline
prime&\ $1^*$& $2^*$&\ $3^*$& $4^*$\\
number&&&&\\
\hline
2&7568 &832 &7586 &814\\
\hline
3&11925 &675 &11924 &676 \\
\hline
5&13396 &1404 &13403 &1397\\
\hline
7&15865 &1335 &15863  &1337\\
\hline
11& 17968&1232  &17968 &1232 \\
\hline
13&18045 &1155  &18028  &1172\\
\hline
17&18209 &1191 &18209 &1191\\
\hline
19&18784 &816 &18785 &815 \\
\hline
23&19279 &521 &19288 &512\\
\hline
29&19020 &780 &19019 &781 \\
\hline
31&19500 &300 &19497 &303\\
\hline
37&19158 &642 &19164 &636\\
\hline
41&19074 &726 &19072 &728 \\
\hline
\end{tabular}
\begin{tabular}{|c|r|r|r|r|}
\hline
prime&\ $1^*$& $2^*$&\ $3^*$& $4^*$\\
number&&&&\\
\hline
43&19168 &632 &19180 &620 \\
\hline
47&19768 &232 &19763 &237 \\
\hline
53&19297 &503 &19297 &503 \\
\hline
59&19560 &240 &19556 &244 \\
\hline
61&19658 &342 &19657 &343\\
\hline
67&19502 &298 &19503 &297 \\
\hline
71&19751 &49 &19755 &45 \\
\hline
73&19476 &324 &19477 &323 \\
\hline
79&19914 &86 &19915 &85 \\
\hline
83&19635 &165 &19635 &165 \\
\hline
89&19510 &290 &19510 &290 \\
\hline
97&19623 &377 &19626 &374 \\
\hline
&&&&\\
\hline
\end{tabular}
\\$1^*$ (resp., $3^*$)is the number of elements $\alpha$ in $\mathcal{P}(\Phi_2^{[1,z],(1)})$
(resp., $\mathcal{P}(\Phi_2^{[-1,z],(1)}$) and
$2^*$(resp., $4^*$) is
the number of elements $\alpha$ in $\mathcal{H}(\Phi_2^{[1,z],(1)})$
(resp., $\mathcal{H}(\Phi_2^{[-1,z],(1)})$).
\\

In Table 4 for the prime numbers $p$ with $2\leq p \leq 100$ and $\{z\in p{\Bbb Z}_p| x^3+ax+bp\text{\ is the minimal polynomial of }z, a,b\in {\Bbb Z},\ 0<a\leqq 10,-10\leq b\leq 10, ord_p(a)=0\}$
and 100 elements in $D$ given in the same way as Table 2 we observe periodicity
by the  $\Phi_2^{[\epsilon,z],(2)}$ continued fraction algorithm.

\begin{center}
Table 4 $\Phi_2^{[-1,z],(2)}$ continued fraction algorithm with cubic $z$
\end{center}

\begin{tabular}{|c|r|r|r|r|}
\hline
prime  &$1^*$&$2^*$&$3^*$ & $4^*$\\
number&&&&\\
\hline
2&8221&179&8215&185\\
\hline
3&12494&106&12490&110\\
\hline
5&14633 &167 &14636 &164 \\
\hline
7&16988 &212 &16993 &207 \\
\hline
11&19102 &98 &19102 &98 \\
\hline
13&19063 &137 &19052 &148 \\
\hline
17&19273&127&19275 &125 \\
\hline
19&19516 &84 &19515 &85 \\
\hline
23&19718 &82 &19719 &81 \\
\hline
29&19717&83 &19712 &88 \\
\hline
31&19775 &25 &19775 &25 \\
\hline
37&19730 &70 &19727 &73 \\
\hline
41&19755 &45 &19758 &42 \\
\hline
\end{tabular}
\begin{tabular}{|c|r|r|r|r|}
\hline
prime  &$1^*$&$2^*$&$3^*$ & $4^*$\\
number&&&&\\
\hline
43&19766 &34 &19762 &38 \\
\hline
47&19984 &16 &19985 &15 \\
\hline
53&19699 &101 &19700 &100 \\
\hline
59&19726 &74 &19724 &76 \\
\hline
61&19986 &14 &19983 &17 \\
\hline
67&19795 &5 &19795 &5 \\
\hline
71&19781 &19 &19781 &19 \\
\hline
73&19784 &16 &19783 &17 \\
\hline
79&20000 &0 &20000 &0 \\
\hline
83&19766 &34 &19765 &35 \\
\hline
89&19789 &11 &19786 &14 \\
\hline
97&19971 &29 &19968 &32 \\
\hline
&&&&\\
\hline
\end{tabular}
\\$1^*$ (resp., $3^*$)is the number of elements $\alpha$ in $\mathcal{P}(\Phi_2^{[1,z],(2)})$
(resp., $\mathcal{P}(\Phi_2^{[-1,z],(2)}$) and
$2^*$(resp., $4^*$) is
the number of elements $\alpha$ in $\mathcal{H}(\Phi_2^{[1,z],(2)})$
(resp., $\mathcal{H}(\Phi_2^{[-1,z],(2)})$).
\\

In Table 5 for the prime numbers $p$ with $2\leq p \leq 100$ and $\{z\in p{\Bbb Z}_p| x^3+ax+bp\text{\ is the minimal polynomial of }z, a,b\in {\Bbb Z},\ 0<a\leqq 10,-10\leq b\leq 10, ord_p(a)=0\}$
and 100 elements in $D$ given in the same way as Table 2 we observe periodicity
by the  $\Phi_2^{[\epsilon,z],(3)}$ continued fraction algorithm.

\newpage

\begin{center}
Table 5 $\Phi_2^{[-1,z],(3)}$ continued fraction algorithm with  cubic $z$
\end{center}

\begin{tabular}{|c|r|r|r|r|}
\hline
prime  &$1^*$&$2^*$&$3^*$ & $4^*$\\
number&&&&\\
\hline
2&8380&20&8380&20\\
\hline
3&12580&20&12581&19\\
\hline
5&14793&7&14795&5\\
\hline
7&17164&36&17164&36\\
\hline
11&19197&3&19197&3\\
\hline
13&19200&0&19200&0\\
\hline
17&19384&16&19384&16\\
\hline
19&19599&1&19599&1 \\
\hline
23&19800&0&19800&0 \\
\hline
29&19800&0&19800&0 \\
\hline
31&19800&0&19800&0 \\
\hline
37&19800&0&19800&0 \\
\hline
41&19800&0&19800&0 \\
\hline
\end{tabular}
\begin{tabular}{|c|r|r|r|r|}
\hline
prime  &$1^*$&$2^*$&$3^*$ & $4^*$\\
number&&&&\\
\hline
43&19800&0&19800&0 \\
\hline
47&20000&0&20000&0 \\
\hline
53&19800&0&19800&0 \\
\hline
59&19800&0&19800&0 \\
\hline
61&20000&0&20000&0 \\
\hline
67&19800&0&19800&0 \\
\hline
71&19800&0&19800&0 \\
\hline
73&19800&0&19800&0 \\
\hline
79&20000&0&20000&0 \\
\hline
83&19800&0&19800&0 \\
\hline
89&19800&0&19800&0 \\
\hline
97&20000&0&20000&0 \\
\hline
&&&&\\
\hline
\end{tabular}
\\$1^*$ (resp., $3^*$)is the number of elements $\alpha$ in $\mathcal{P}(\Phi_2^{[1,z],(2)})$
(resp., $\mathcal{P}(\Phi_2^{[-1,z],(3)}$) and
$2^*$(resp., $4^*$) is
the number of elements $\alpha$ in $\mathcal{H}(\Phi_2^{[1,z],(3)})$
(resp., $\mathcal{H}(\Phi_2^{[-1,z],(3)})$).
\\

In Table 6 for the prime numbers $p$ with $2\leq p \leq 100$, $deg\in \{3,4,5,6\}$ and $\{z\in p{\Bbb Z}_p| x^{deg}+ax+bp\text{\ is the minimal polynomial of }z, a,b\in {\Bbb Z},\ 0<a\leqq 10,-10\leq b\leq 10, ord_p(a)=0\}$
and 100 elements in $D$ given in the same way as Table 2 we observe periodicity
by the  $\Phi_{3}^{[z]}$ continued fraction algorithm.

\newpage

\begin{center}
Table 6 $\Phi_{3}^{[z]}$ continued fraction algorithm with $3\leq$ the degree of $z \leq 6$
\end{center}

\begin{center}
\begin{tabular}{|c|r|r|r|r|r|r|r|r|}
\hline
&\multicolumn{2}{|c|}{$deg=3$}&\multicolumn{2}{|c|}{$deg=4$}&\multicolumn{2}{|c|}{$deg=5$}&\multicolumn{2}{|c|}{$deg=6$}\\
\hline
prime  &$1^*$&$2^*$&$1^*$&$2^*$&$1^*$&$2^*$&$1^*$&$2^*$\\
number&&&&&&&&\\
\hline
2&8400&0&8100&0&8800&0&9000&0\\
\hline
3&12600&0&12900&0&13200&0&13400&0\\
\hline
5&14800&0&15200&0&15200&0&15500&0\\
\hline
7&17200&0&17400&0&17600&0&17600&0\\
\hline
11&19200&0&19600&0&19600&0&19600&0\\
\hline
13&19200&0&19800&0&19800&0&19800&0\\
\hline
17&19400&0&19700&0&19800&0&19900&0\\
\hline
19&19600&0&19900&0&19800&0&19900&0\\
\hline
23&19800&0&19900&0&19800&0&19900&0\\
\hline
29&19800&0&19800&0&19800&0&19900&0\\
\hline
31&19800&0&19800&0&20000&0&19900&0\\
\hline
37&19800&0&19800&0&20000&0&19900&0\\
\hline
41&19800&0&20000&0&19800&0&19900&0\\
\hline
43&19800&0&20000&0&19800&0&20000&0\\
\hline
47&20000&0&20000&0&20000&0&20000&0\\
\hline
53&19800&0&20000&0&20000&0&20000&0\\
\hline
59&19800&0&19800&0&20000&0&20000&0\\
\hline
61&20000&0&19800&0&20000&0&20000&0\\
\hline
67&19800&0&19800&0&20000&0&20000&0\\
\hline
71&19800&0&19900&0&20000&0&20000&0\\
\hline
73&19800&0&19900&0&20000&0&20000&0\\
\hline
79&20000&0&20000&0&20000&0&19900&0\\
\hline
83&19800&0&20000&0&19800&0&19900&0\\
\hline
89&19800&0&20000&0&19800&0&20000&0\\
\hline
97&20000&0&20000&0&20000&0&20000&0\\
\hline
\end{tabular}
\end{center}
$1^*$ is the number of elements $\alpha$ in $\mathcal{P}(\Phi_{3}^{[z]})$
$2^*$ is
the number of elements $\alpha$ in $\mathcal{H}(\Phi_{3}^{[z]})$
\\

\phantom{}

\section{Conjecture}

We give the following conjectures which are supported by our numerical
experiments.\\

\noindent
{\bf Conjecture 1}.
Let $p$ be any prime number, and $K$ be any finite extension of ${\Bbb Q}$
with  $K\subset {\Bbb Q}_p$.
Let $s+1$ be its degree over ${\Bbb Q}$, $z\in K$ be any element satisfying Condition {\bf H} and
$K={\Bbb Q}(z)$.
For every  $\overline{\alpha}=(\alpha_1,\ldots, \alpha_s)\in K^s$
such that $1, \alpha_1,\ldots, \alpha_s$
are linearly independent over ${\Bbb Q}$,
$\overline{\alpha}$
has a periodic $\Phi_{3}^{[z]}$  continued fraction expansion.\\

\noindent
{\bf Conjecture 2}.
Let $p$ be any prime number, and $K$ be any cubic extension of ${\Bbb Q}$
with  $K\subset {\Bbb Q}_p$.
Let $z\in K$ be any element satisfying Condition {\bf H} and
$K={\Bbb Q}(z)$ and $\epsilon\in \{-1,1\}$.
There exists a map $\phi: D\to Ind$ such that
for every  $\overline{\alpha}=(\alpha_1, \alpha_2)\in K^2$
such that $1, \alpha_1, \alpha_2$
are linearly independent over ${\Bbb Q}$,
$\overline{\alpha}$
has a periodic $\Phi$  continued fraction expansion, where
$\Phi$ is the $c$-map $\Phi=(\phi(\cdot),H{}_{\phi(\cdot)}^{[\cdot,\epsilon,z]},id,\bar{0})$.\\

We remark that  Conjecture 1 holds for $s=1$ (Theorem \ref{t3}).

\begin{center}
{\bf Acknowledgements}
\end{center}

The authors would like to thank the anonymous referees for several helpful comments and remarks.
This research was supported by JSPS KAKENHI Grant Number 15K00342.


\begin{thebibliography}{9}
\bibitem{BFK}
C. Bauer, A. Frink, R. Kreckel;
Introduction to the GiNaC framework for symbolic computation within the C$++$ programming language,
J. Symbolic Comput. 33 (2002), no. 1, 1–12.
\bibitem{BH}
H. Bekki; On periodicity of geodesic continued fractions,J. Number Theory 177 (2017), 181-210.
%
\bibitem{Br}
J. Browkin;
Continued fractions in local fields. II.
Math. Comp. 70 (2001), no. 235, 1281-1292.
%
\bibitem{B}
P. Bundschuh; p-adische Kettenbr\"uche und Irrationalit\"at $p$-adischer Zahlen,
Elem. Math. 32 (1977), no. 2, 36-40.
%
\bibitem{D}
E. Dubois, R. Paysant-Le Roux; Algorithme de Jacobi–Perron dans les extensions cubiques. C. R. Acad. Sci. Paris Sr. A-B 280(1975), 183-186.
%
\bibitem{O}
T. Ooto;
Transcendental $p$-adic continued fractions, Math.Z(2017), no. 3-4, 1053-1064.%
\bibitem{R}
A. A. Ruban; Certain metric properties of p-adic numbers, (Russian), Sibirsk. Mat.
Zh. 11 (1970), 222-227.
\bibitem{STY}
A. Saito, J.-I. Tamura, S. Yasutomi;
$p$-adic continued fractions and Lagrange's theorem, to appear in Comment. Math. Univ. St. Pauli
67(2019).
%
\bibitem{STY2}
A. Saito, J.-I. Tamura, S. Yasutomi;
Convergence of multi-dimensional
$p$-adic continued fractions,  preprint(2017).
%
\bibitem{SY}
A. Saito,  A. Yamaguchi;
Pseudorandom number generation using chaotic true orbits of the Bernoulli map,  Chaos 26, 063122 (2016)
\bibitem{S}
T. Schneider; \"Uber $p$-adische Kettenbr\"uche, Symp. Math. 4 (1968/69), 181-189.
%
\bibitem{T}
J.-I. Tamura;
A $p$-adic phenomenon related to certain integer matrices, and $p$-adic values of a multidimensional continued fraction,
in: Summer School on the Theory of Uniform Distribution,
RIMS K\^{o}ky\^{u}roku Bessatsu \textbf{B29} (2012), 1–40.
\bibitem{W}
B. M. M. de Weger; Periodicity of $p$-adic continued fractions. Elem. Math. 43 (1988), no. 4, 112-116.
%
\end{thebibliography}
\end{document}